\numberwithin{equation}{section} \makeatletter 
\def\rit{\mathbb{R}}
\def\zit{\mathbb{Z}}   
\def\nit{\mathbb{N}}
\def\ppit{\mathbb{P}} 
\def\qit{\mathbb{Q}} 
\def\cit{\mathbb{C}} 
\def\fit{\mathbb{F}}
\newtheorem{theorem}{Theorem}[subsection]
\newtheorem{proposition}[theorem]{Proposition}
\newtheorem{lemma}[theorem]{Lemma}
\newtheorem{corollary}[theorem]{Corollary}
\theoremstyle{definition}
\newtheorem{definition}[theorem]{Definition}
\newtheorem{remark}[theorem]{Remark}
\newtheorem{example}[theorem]{Example}
\DeclareMathOperator{\gr}{gr}
\DeclareMathOperator{\tor}{torsion}
\DeclareMathOperator{\smal}{sm}
\DeclareMathOperator{\age}{age}
\DeclareMathOperator{\id}{id}
\DeclareMathOperator{\Diag}{Diag}
\DeclareMathOperator{\Pic}{Pic}
\DeclareMathOperator{\plat}{flat}
\DeclareMathOperator{\orb}{orb}
\DeclareMathOperator{\Hom}{Hom}
\begin{document}

\title[$QH^{\ast}(\ppit(w))$, a mirror differential system and their classical limits]{ \bf The small quantum cohomology of a weighted projective
  space, a mirror $D$-module and their classical limits}
\author{Antoine Douai *, Etienne Mann \dag}
\address{Antoine Douai, Laboratoire J.A Dieudonn\'e, UMR CNRS 6621, 
  Universit\'e de Nice, Parc Valrose, F-06108 Nice Cedex 2, France}
\email{Antoine.DOUAI@unice.fr}
\address{Etienne Mann,  D\'epartement de Math\'ematiques, Universit\'e
  de Montpellier 2, Place Eug\`ene Bataillon,  F-34 095 Montpellier
  CEDEX 5}
\email{etienne.mann@math.univ-montp2.fr}
\thanks{*Partially supported by ANR grant
    ANR-08-BLAN-0317-01/02 (SEDIGA)}
\thanks{\dag Partially
    supported by ANR grant TheorieGW}

\begin{abstract}
  We first describe a mirror partner ($B$-model) of the small quantum
  orbifold cohomology of weighted projective spaces ($A$-model) in the
  framework of differential equations: we attach to the $A$-model
  ({\em resp.} $B$-model) a quantum differential system 
(that is a trivial bundle equipped with a suitable flat meromorphic connection and a flat bilinear form) and we 
give an explicit isomorphism between these two quantum differential systems. On the $A$-side ({\em resp.} on the $B$-side), 
the quantum differential system alluded to is 
naturally produced by the small quantum cohomology ({\em resp.} a solution of the Birkhoff problem for the Brieskorn lattice of a Landau-Ginzburg model). 
Then we study the degenerations of these quantum differential systems 
and we apply our results to the construction of (classical, limit, logarithmic) Frobenius
  manifolds.
\end{abstract}

\maketitle

\tableofcontents

\numberwithin{theorem}{section}

\section{Introduction}

Mirror symmetry has different mathematical formulations: equality
between the $I$ and $J$ functions, equivalence of categories,
isomorphisms of Frobenius manifolds {\em etc}... In this paper, we
first explore the differential aspect of this symmetry for weighted projective spaces
$\ppit (w):=\ppit (w_{0},w_{1},\cdots ,w_{n})$, the $A$-model, where
$w_{0},w_{1},\cdots ,w_{n}$ are positive integers (to simplify the
exposition, we will assume that $w_{0}=1$). It will be encoded by the
{\em quantum differential system} on $\ppit^{1}\times M$, that is
tuples $(M,H,\nabla,S)$ where $M$ is a complex manifold, $H$ is a
trivial bundle on $\ppit^{1}\times M$, $\nabla$ is a flat meromorphic
connection with logarithmic poles at $\{\infty \}\times M$ and with poles of
order less or equal to two at $\{0\}\times M$, and $S$ is a symmetric, nondegenerate,
$\nabla$-flat bilinear form (for short a {\em metric}, even if there
is no positivity consideration here).  More precisely, we attach a
quantum differential system on $\ppit^{1}\times \cit^{*}$ to the small quantum
orbifold cohomology of $\ppit (w)$ and we show that it is isomorphic
to the one associated with a suitable regular function (the {\em Landau-Ginzburg} model): this $B$-model will
be our mirror partner for the small quantum orbifold cohomology of
weighted projective spaces. 

The reason to work with quantum differential systems is very natural: first, on the $A$-side, they
arise classically as a ``completion'' of the quantum product to an absolute flat connection (thanks to Dubrovin's formalism), and we cannot expect much better.
 Second, on the $B$-side ({\em i.e} in singularity theory), construction of quantum differential systems, independently of mirror symmetry, 
is a long story (it goes back to K. Saito \cite{SK} and his theory of primitive forms) and has 
motivated a lot of work: general statements in our framework (global case) can be found in \cite{DoSa1} (where one of the main tool is Hodge theory) and
some significant class of examples or situations are studied for instance in \cite{dGMS}, \cite{Do1}, \cite{DoSa2}, \cite{RS}. 

It is then reasonable to compare such objects, appearing in quite different areas of mathematics: 
in particular, this enables us to understand the results of \cite{Coa} in the light of singularity theory. 
While computations of 
quantum differential systems are not so easy in general, they can be explicitely done in our situation.
This strategy could be useful in order 
to study more generally the case of the small quantum cohomology 
of hypersurfaces (or complete intersections) in (weighted) projective spaces, for which the Landau-Ginzburg models are clearly 
identified (see \cite{givental2}, \cite{Hori}) and not so far from the ones considered here.

In order to get this first result, we proceed as follows: following
Iritani \cite{Ir}, we first attach a quantum differential system to any proper
smooth Deligne-Mumford stack using the quantum orbifold cohomology.
Thanks to the results recently obtained in \cite{Coa}, this
construction can be done very explicitely in the case of weighted
projective spaces and yields, taking into account an action of the
Picard group, a quantum differential system
$${\cal Q}^{A}=(\mathcal{M}_{A}, \widetilde{H}^{A,\smal}, \widetilde{\nabla}^{A, \smal}, \widetilde{S}^{A, \smal}, n)$$
where
$\mathcal{M}_{A}=H^{2}(\ppit(w),\cit)/\Pic(\ppit(w))\simeq\cit^{*}$,
the metric $\widetilde{S}^{A, \smal}$ being constructed with the help
of the orbifold Poincar\'e duality. We will call this quantum differential system
the {\em (small) $A$-model quantum differential system}. It should be noticed, and
this will be a crucial observation, that the usual sections
$\mathbf{1}_{f_{i}}P^{j}$ of the orbifold cohomology are not {\em global} sections of the
bundle $\widetilde{H}^{A,\smal}$ whereas the $P^{\bullet j}$'s (iteration $j$-times of $P=c_{1}(O(1))$ under the small quantum product) are
global sections of it, see Remark \ref{rem:global,sections}.

We then look for a mirror partner of this $A$-model quantum differential system. 
Using the methods developed in \cite{DoSa2} and \cite{Man}, we show how it is obtained from the Gauss-Manin system of the function 
(this is our ``Landau-Ginzburg'' model)
$F:U\times\mathcal{M}_{B}\rightarrow \cit$ defined by
$$F(u_{1},\cdots ,u_{n},x)=u_{1}+\cdots +u_{n}+\frac{x}{u_{1}^{w_{1}}\cdots u_{n}^{w_{n}}}$$
where $U=(\cit^{*})^{n}$ and $\mathcal{M}_{B}=\cit^{*}$. Indeed, a solution of the Birkhoff problem for the Brieskorn lattice of $F$ gives a trivial bundle $H^{B}$ on $\ppit^{1}\times \mathcal{M}_{B}$ equipped with a connection with the desired poles.
 We get in this way (see section \ref{deformation}) a quantum differential system  
$${\cal Q}^{B}=(\mathcal{M}_{B}, H^{B}, \nabla^{B}, S^{B}, n)$$
using a distinguished solution of the Birkhoff problem, closely related with the canonical ones defined in \cite{DoSa2} in the case $x=1$. 
This will be our {\em $B$-model quantum differential system}.

We prove that the quantum differential systems ${\cal Q}^{A}$ and ${\cal Q}^{B}$ are isomorphic: the isomorphism is very explicit 
and identifies the sections $P^{\bullet j}$ ({\em resp.}
$\mathbf{1}_{f_{i}}P^{j}$) in terms of suitable sections
of the Brieskorn lattice of $F$ (Theorem \ref{quantum}). 
At the end, we get an answer to the following question, which was one of the first (chronologically) motivations of this work: what should the mirror partner of
the standard (orbifold) cohomology basis be? 
We discuss the comparison between our result and Proposition 4.8 of Iritani \cite{Ir}
in Remark \ref{rem:correspondence}.\\

 Identifying these two models, we obtain finally a quantum differential system 
\begin{displaymath}
  \mathcal{S}_{w}=(\mathcal{M},H,\nabla,S,n)
\end{displaymath}
where $\mathcal{M}=\cit^{*}$ (the index $w$ recalls the weights $w_{0},\cdots ,w_{n}$) and, as a by-product, a  Frobenius type structure $\fit_{w}$ on $\mathcal{M}$ in the sense of \cite{Do1} and \cite{HeMa}, that is a tuple 
$$\fit_{w}=(\mathcal{M}, E, R_{0}, R_{\infty}, \Phi, \bigtriangledown ,g)$$
the different objects involved satisfying some natural compatibility relations (coming from the flatness of $\nabla$). 
This Frobenius type structure will be the main tool in our construction of Frobenius manifolds.

In the second part of this paper, we study the behaviour of these structures at the origin (this kind of problem is also considered in \cite{dGMS}, using another strategy and in a different situation). 
We construct in section \ref{limitSTF} a limit quantum differential system (and thus a limit Frobenius type structure
$\overline{\fit}_{w}$)
$$\overline{\mathcal{S}}_{w}=(\overline{H},\overline{\nabla}, \overline{S},n)$$ 
on $\ppit^{1}$ using Deligne's canonical extensions of the connection involved. 
We explain how it can be used to understand the correspondence between ``classical limits'', that is between the orbifold cohomology ring of $\ppit (w)$ and a 
suitable graded vector space: we hope that it will shed new light on \cite[theorem 1.1]{Man}.

The last part is devoted to the construction of
classical, limit and logarithmic Frobenius manifolds: we need a
Frobenius type structure and a section of the corresponding bundle
such that the associated period map is invertible, in other words a
{\em primitive} section, see for instance \cite[Chapitre VII]{Sab1}.
To get such objects, we look, following \cite{Do1} and \cite{HeMa},
for unfoldings of the initial data $\fit_{w}$ (in the classical case) and $\overline{\fit}_{w}$ (in the limit case): they will be obtained from unfoldings
of the quantum differential systems $\mathcal{S}_{w}$ and
$\overline{\mathcal{S}}_{w}$ (another reason to work with quantum differential systems
is that one can unfold them, see \S \ref{construction}). In the best cases, we
use the reconstruction method presented in {\em loc. cit.} to get {\em
  universal} unfoldings.  We
show first and in this way that
\begin{enumerate}
\item the Frobenius type structure $\fit_{w}$ yields a Frobenius manifold on $\Delta\times (\cit^{\mu -1}, 0)$, $\Delta$ denoting any open disc in $M$. We will use it to compare, using the arguments given in \cite{Do1}, the canonical Frobenius manifolds attached to the  functions $F_{x}:=F(.\ ,x)$, $x\in\Delta$, by the punctual construction given in \cite{DoSa2}; 
\item the limit Frobenius type structure $\overline{\fit}_{w}$ yields
  ``limit'' Frobenius manifolds, depending on the weights
  $w_{0},\cdots ,w_{n}$ . For instance, we get a universal unfolding
  only in the manifold case ({\em i.e} $w_{0}=\cdots =w_{n}=1$) and, as a
  consequence of the universality, we obtain a unique, up to
  isomorphism, limit Frobenius manifold. In the orbifold case,
  that is if there is a weight $w_{i}$ greater or equal to two, we
  construct a limit Frobenius manifold for which the product is
  constant, but we loose any kind of unicity: our limit Frobenius type
  structure could produce other Frobenius manifolds, which can be
  difficult to compare.
\end{enumerate}

This distinction between the manifold case and the orbifold case also appears in the construction of {\em logarithmic} Frobenius manifolds. For instance, 
in the manifold case, we show how our initial data $\fit_{w}$ yields more precisely, as before {\em via} one of its universal unfoldings, a logarithmic Frobenius 
manifold with logarithmic pole along $x=0$ in the sense of \cite{R}. This gives the logarithmic Frobenius manifold attached to $\ppit^{n}$ in {\em loc. cit.} by a 
different method (Reichelt works directly with the whole Gromov-Witten potential;  more generally, he constructs a logarithmic Frobenius manifold from the big quantum 
cohomology of any smooth manifold). In the orbifold case,  our metric degenerates at the origin and we get only a logarithmic Frobenius manifold {\em without metric}. 
The construction of a logarithmic Frobenius manifold using this method is still an open problem. We also explain why Reichelt's construction does not work in the orbifold case.\\

The paper is organized as follows: we define the quantum differential systems and the Frobenius type structures in
section \ref{FrobeniusSaito}.  The construction of the quantum differential system
attached to an orbifold (the $A$-model quantum differential system) is done in section
\ref{Amodel}. It is explained in the case of the weighted projective
spaces.  Section \ref{Bmodel} is devoted to the construction of the
$B$-model quantum differential system and the main theorem is stated in section
\ref{mirrorsmall}. We compute the limits of our structures in section
\ref{limite} and we 
discuss the construction of Frobenius manifolds in section \ref{construction}.\\

This paper is a revised version of the preprint \cite{Do2} and  supersedes it.\\

\section{Quantum differential systems and Frobenius type structures}
\label{FrobeniusSaito}

\begin{definition}\label{defi:Saito,structure} Let $M$ be a complex manifold, $n$ be a positive integer. A {\em quantum differential system of weight $n$}\footnote{A quantum differential system is also sometimes called a $tr(TLEP)(n)$-structure, see  \cite[Section 5.2]{He}}
 on $\ppit^{1}\times M$ is a tuple $(M, H, \nabla, S, n)$ where 
\begin{itemize}
 \item $H$ is a trivial bundle over $\ppit^{1}\times M$,
 \item $\nabla$ is a meromorphic, flat connection on $H$ with poles
   along $\{0,\ \infty \}\times M$, logarithmic along
   $\{\infty\}\times M$, of order less or equal to $2$ along
   $\{0\}\times M$; this implies that the connection has locally the form 
   \begin{align}\label{eq:log+rk1,connection}
     \nabla&:=d +\left(\frac{A_{-1}^{(0)}(\underline{q})}{z}+A_{0}^{(0)}(\underline{q})\right)\frac{dz}{z}+
\sum_{i=1}^{s}\left(\frac{A_{-1}^{(i)}(\underline{q})}{z}+A_{0}^{(i)}(\underline{q})\right)dq_{i}
   \end{align}
   where $z$ is a coordinate on $\ppit^{1}$, $\underline{q}=(q_{1}, \ldots ,q_{s})$
   are coordinates on $M$ and the matrices involved are holomorphic in $q_{1},
   \ldots ,q_{s}$.
 \item $S$ is a $\nabla$-flat, nondegenerate $\cit$-bilinear form,
   satisfying
$$S:\mathcal{H}\times i^{\ast}\mathcal{H}\to z^{n}\mathcal{O}_{\ppit^{1}\times M}$$
where $\mathcal{H}$ is the sheaf of sections of $H$, $z$ is a fixed
coordinate on $\ppit^{1}\smallsetminus\{\infty\}$ and
$$i:\ppit^{1}\times M\to \ppit^{1}\times M$$
sends $(z,\underline{t})$ to $(-z,\underline{t})$.
\end{itemize}
\end{definition}

\begin{definition} Two quantum differential systems $(M_{1},
  H_{1}, \nabla_{1}, S_{1}, n_{1})$ and $(M_{2}, H_{2}, \nabla_{2},
  S_{2}, n_{2})$ are {\em isomorphic} if there exists an isomorphism
  $(\id,\tau):\ppit^{1}\times M_{1}\to \ppit^{1}\times M_{2}$ and an
  isomorphism of vector bundles $\gamma :H_{1}\to (\id,\tau)^{*}H_{2}$
  compatible with the connections and the metrics, {\em i.e} such that
  \begin{itemize}
  \item  $\nabla_{2}^{*}\gamma (s)=\gamma (\nabla_{1}s)$ for any section $s$ of $H_{1}$,
  \item $S_{2}^{*}(\gamma (e), \gamma (f))=S_{1}(e,f)$ for any sections $e$ and $f$ of $H_{1}$ (in particular $n_{1}=n_{2}$), 
  \end{itemize}
$\nabla_{2}^{*}$ ({\em resp.} $S_{2}^{*}$) denoting the connection ({\em resp.} the metric) on $(\id,\tau)^{*}H_{2}$
induced by $\nabla_{2}$ ({\em resp.} $S_{2}$).  
\end{definition}

\begin{definition} A {\em Frobenius type structure}\footnote{This terminology is borrowed from \cite{HeMa}} on $M$ is a tuple
$$(M, E, \bigtriangledown , R_{0}, R_{\infty},\Phi ,g)$$
\noindent where
\begin{itemize}
\item $E$ is a locally free sheaf of ${\cal O}_{M}$-modules,
\item $\bigtriangledown$ is a connection on $E$, 
\item  $R_{0}$ and $R_{\infty}$ are ${\cal O}_{M}$-linear endomorphisms of $E$,
\item $\Phi :E\rightarrow \Omega^{1}(M)\otimes E$ is a ${\cal
    O}_{M}$-linear map,
\item $g$ is a ${\cal O}_{M}$-bilinear form, symmetric and nondegenerate (a {\em metric}) on the sheaf of sections of $E$, 
\end{itemize}
these objects satisfying the relations

\begin{center} $\bigtriangledown^{2}=0$, $\bigtriangledown (R_{\infty})=0$, $\Phi\wedge\Phi =0$, $[R_{0},\Phi ]=0$,
\end{center}
\begin{center} $\bigtriangledown (\Phi )=0$, $\bigtriangledown (R_{0})+\Phi =[\Phi ,R_{\infty}]$, 
\end{center} 
\begin{center}
 $\bigtriangledown (g)=0$, $\Phi^{*}=\Phi$, $R_{0}^{*}=R_{0}$,
 $R_{\infty}+R_{\infty}^{*}=r \id$
\end{center}
for a suitable constant $r$, $^{*}$ denoting as above the adjoint with respect to $g$. 
\end{definition}

\begin{remark}\label{FTSonapoint} 
(1) A quantum differential system on $\ppit^{1}$ ({\em i.e} $M=\{point \}$) will be denoted by $(H, \nabla, S, n)$.\\ 
(2) A Frobenius type structure on a point is a tuple
$$(E,R_{0}, R_{\infty}, g)$$
where $E$ is a finite dimensional vector space over $\cit$,  $g$ is a symmetric and nondegenerate bilinear form on $E$, $R_{0}$ and $R_{\infty}$ 
being two endomorphisms of $E$ satisfying $R_{0}^{*}=R_{0}$ and
$R_{\infty}+R_{\infty}^{*}=r \id$ for a suitable complex number $r$, $^{*}$ denoting the adjoint with respect to $g$. $\blacklozenge$
\end{remark}

A quantum differential system yields a Frobenius type structure (see for instance \cite[VI, paragraphe 2.c p.214]{Sab1}). Indeed,  
let $(M, H, \nabla ,S, n)$ be a quantum differential system on $\ppit^{1}\times M$, $\sigma_{1},\cdots ,\sigma_{r}$ be a basis of global sections of $H$.
Define 
\begin{itemize}
 \item $E:=H\mid_{\{0\}\times M}$ and $E_{\infty }:=H\mid_{\{\infty\}\times M}$ ($E$ and $E_{\infty}$ are canonically isomorphic),
\item $R_{0}[\sigma_{i}]:=[z^{2}\nabla_{\partial_{z}}\sigma_{i}],$ for $i=1,\cdots, r$ where $[\cdot ]$ denotes  the class in $E$, 
\item $g([\sigma_{i}],[\sigma_{j}]):=z^{-n}S(\sigma_{i},\sigma_{j})$ for $i,j=1,\cdots,r$, 
\item $\Phi_{\xi}[\sigma_{i}]:=[z\nabla_{\xi}\sigma_{i}]$
for any vector field $\xi$ on $M$. 
\end{itemize}
The connection $\bigtriangledown$ and the endomorphism $R_{\infty}$ are defined analogously, using the restriction $E_{\infty}$: we put, with $\tau =z^{-1}$,
\begin{itemize}
\item $R_{\infty}[\sigma_{i}]:=[\nabla_{\tau\partial_{\tau}}\sigma_{i}]$
\item $\bigtriangledown_{\xi}[\sigma_{i}]:=[\nabla_{\xi}\sigma_{i}].$
\end{itemize}

\begin{proposition}[see \cite{Sab1}]
\label{MStoSTF}
The tuple $(M, \bigtriangledown, E, R_{0}, R_{\infty}, \Phi ,  g)$ is a Frobenius type structure on $M$.
\end{proposition}

\noindent Notice that the characteristic relations of a Frobenius type
structure is the counterpart of the integrability of the connection of
the associated quantum differential system.

\numberwithin{theorem}{subsection}

 \section{$A$-model}\label{Amodel}

 Let $\mathcal{X}$ be a smooth proper Deligne-Mumford stack of finite
 type over $\cit$ of complex dimension $n$.  In this section, we
 construct a quantum differential system on $\ppit^{1}\times M_{A}$ where
 $M_{A}:=H^{*}_{\orb}(\mathcal{X},\cit)$ (a quantum $D$-module in the
 sense of \cite{Ir}; a similar notion, called \textit{semi-infinite
   variation of Hodge structure} is defined by Barannikov in \cite{Ba}
 and \cite{Ba2}). This will be our big $A$-model quantum differential system.  We
 restrict it to $H^{2}(\mathcal{X},\cit)$ and we quotient the result
 by an action of the Picard group of $\mathcal{X}$ to get the small
 $A$-model quantum differential system.  Finally, we explain this construction for
 weighted projective spaces.

Our general references will be \cite{LMB} and \cite[Appendix]{Vistoli}
for Deligne-Mumford stacks and \cite{AGV}, \cite{CR1} and \cite{CR2}
for orbifold cohomology.

\subsection{The big $A$-model quantum differential system}

First, we recall some basic facts about orbifold cohomology.  The
\textit{inertia stack}, denoted by
$\mathcal{IX}:=\mathcal{X}\times_{\mathcal{X}\times \mathcal{X}}
\mathcal{X}$, is the fiber product over the two diagonal morphisms
$\mathcal{X}\to \mathcal{X}\times \mathcal{X}$. The inertia stack is a
smooth Deligne-Mumford stack but different components will in general
have different dimensions.  The identity section gives an irreducible
component which is canonically isomorphic to $\mathcal{X}$. This
component is called \textit{the untwisted sector}. All the other
components are called \textit{twisted sectors}. We thus have
$$\mathcal{IX}=\mathcal{X}\sqcup \bigsqcup_{v\in T} \mathcal{X}_{v}$$
where $T$ parametrizes the set of components of the twisted sectors of
$\mathcal{IX}$.

The orbifold cohomology of $\mathcal{X}$ is defined, as vector space,
by $H^{*}_{\orb}(\mathcal{X},\cit):=H^{*}(\mathcal{IX},\cit)$.  
We have
$$H^{*}_{\orb}(\mathcal{X},\cit)=H^{*}(\mathcal{X},\cit)\oplus \bigoplus_{v \in T}H^{*}(\mathcal{X}_{v},\cit).$$
We will put $M_{A}:=H^{*}_{\orb}(\mathcal{X},\cit)$ in what follows. 

To define a grading on $M_{A}$, we associate to any $v\in T$ a
rational number called the \textit{age} of $\mathcal{X}_{v}$.  A
geometric point $(x,g)$ in $\mathcal{IX}$ is a point $x$ of
$\mathcal{X}$ and $g\in Aut(x)$. Fix a point $(x,g)\in
\mathcal{X}_{v}$. As $g$ acts on the tangent space $T_{x}\mathcal{X}$,
we have an eigenvalue decomposition of $T_{x}\mathcal{X}$.  For any $f
\in [0,1[$, we denote $(T_{x}\mathcal{X})_{f}$ the sub-vector space
where $g$ acts by multiplication by $\exp(2\sqrt{-1}\pi f)$.  We
define
\begin{displaymath}
  \age(v):=\sum_{f\in[0,1[ } f.\dim_{\cit}(T_{x}\mathcal{X})_{f}.
\end{displaymath}
This rational number only depends on $v$.  Let $\alpha_{v}$ be a
homogeneous cohomology class of $\mathcal{X}_{v}$.  We define the {\em orbifold degree} of $\alpha_{v}$ by
\begin{displaymath}
  \deg^{\orb}(\alpha_{v}):=\deg(\alpha_{v})+2\age(v).
\end{displaymath}

Let $\phi_{0},\ldots ,\phi_{N}$ be a graded homogeneous basis of
$H^{*}_{\orb}(\mathcal{X},\qit)$ such that $\phi_{0}\in
H^{0}(\mathcal{X},\qit)$ and $\phi_{1}, \ldots ,\phi_{s}\in
H^{2}(\mathcal{X},\qit)$.  Notice that the cohomology classes
$\phi_{1}, \ldots ,\phi_{s}$ are in the cohomology of $\mathcal{X}$
{\em i.e} in the cohomology of the untwisted sector.  
We will denote by $\underline{t}:=(t_{0}, \ldots, t_{N})$ the coordinates of $M_{A}$ associated to this basis.

\subsubsection{The trivial bundle and the flat meromorphic connection}

Let $H^{A}$ be the trivial vector bundle over $\ppit^{1}\times M_{A}$ whose
fibers are $H^{*}_{\orb}(\mathcal{X},\cit)$.  For $i\in\{0, \ldots
,N\}$, we see $\phi_{i}$ as a global section of the bundle $H^{A}$.

Define the vector field, called \textit {the Euler vector field},
\begin{displaymath}
  \mathfrak{E}:=\sum_{i=0}^{N}\left(1-\frac{\deg^{\orb}(\phi_{i})}{2}\right)t_{i}\partial_{i} +
  \sum_{i=1}^{s}r_{i}\partial_{i}.
\end{displaymath}
where the $r_{i}$ are rational numbers determined by the equality
$c_{1}(T\mathcal{X})=\sum_{i=1}^{s}r_{i}\phi_{i}$ and $\partial_{i}$ denotes the vector field $\frac{\partial}{\partial t_{i}}$.

The big quantum product\footnote{Usually, working on quantum
  cohomology, one has either to add the Novikov ring (see section
  8.1.3 of \cite{CK}) or to assume that the quantum product converges
  on some open of $M_{A}$ (see Assumption 2.1 in \cite{Ir}). But we
  will mainly consider the small quantum product of weighted
  projective spaces, for which the convergence problems are solved.} denoted by $\bullet_{\underline{t}}$,
endows the sheaf of sections of the vector bundle $H^{A}$ with a product.  We define a
$\mathcal{O}_{M_{A}}$-linear homomorphism which will turn out to be a
Higgs field ({\em ie.} $\Phi\wedge \Phi=0$ see Proposition
\ref{prop:flat,connection})
$$\Phi: TM_{A} \rightarrow End\left(H^{A}\right) \mbox{ by }\Phi (\partial_{i})=\phi_{i}\bullet_{\underline{t}}.$$
In coordinates, we have
\begin{displaymath}
  \Phi=\sum_{i=0}^{N} \Phi^{(i)}(\underline{t}) dt_{i} 
\end{displaymath}
where $\Phi^{(i)}(\underline{t})$ is the endomorphism
$\phi_{i}\bullet_{\underline{t}}$.

Define, on the trivial bundle $H^{A}$, the connection
\begin{equation*}
  \nabla^{A} :=d_{M_{A}}+d_{\ppit^{1}} - \frac{1}{z}\pi^{\ast}\Phi + \left(\frac{1}{z}\Phi(\mathfrak{E})+R_{\infty}\right)\frac{dz}{z}
\end{equation*}
where $\pi:\ppit^{1}\times M_{A} \to M_{A}$ is the projection and $R_{\infty}$ is the
semi-simple endomorphism whose matrix in the basis $(\phi_{i})$ is
$$R_{\infty}=\Diag\left(\frac{\deg^{\orb}(\phi_{0})}{2}, \ldots ,\frac{\deg^{\orb}(\phi_{N})}{2}\right).$$

\noindent The proposition below is well-known. Some parts
and ideas of the proof can be found in \cite{Sab1},\cite{He},\cite{Manin} and \cite{CK}.

\begin{proposition}[see \S 2.2 in \cite{Ir}]
  \label{prop:flat,connection} The meromorphic connection $\nabla^{A}$
  is flat.
\end{proposition}

\subsubsection{The pairing} 

The vector space $H^{*}_{\orb}(\mathcal{X},\cit)$ is endowed with a
nondegenerate pairing which is called the orbifold Poincar\'e pairing
(see \cite{CR2}).
We denote it by $\langle \cdot, \cdot \rangle$.  It satisfies the
following homogeneity property:
\begin{equation}\label{eq:orbifold,PD}
  \mbox{ if }\langle \phi_{i},\phi_{j}\rangle \neq 0\ \mbox{ then } \deg^{\orb}(\phi_{i})+\deg^{\orb}(\phi_{j})=2n
\end{equation}
where $n=\dim_{\cit}\mathcal{X}$. We define a pairing $S^{A}$ on the global sections $\phi_{0}, \ldots
,\phi_{N}$ of $H^{A}$ by 
\begin{displaymath}
  S^{A}(\phi_{i},\phi_{j}):= z^{n}\langle \phi_{i},\phi_{j}\rangle.
\end{displaymath}
and we extend it by linearity using the rules 
\begin{equation}\label{eq:15}
  a(z,\underline{t})S^{A}(\cdot,\cdot)=S^{A}(a(z,\underline{t})\cdot,\cdot)=S^{A}(\cdot,a(-z,\underline{t})\cdot)
\end{equation}
for any
$a(z,\underline{t})\in \mathcal{O}_{\ppit^{1}\times M_{A}}$.

\begin{proposition}\label{prop:A,metric}
  The pairing $S^{A}(\cdot,\cdot)$ is nondegenerate, $(-1)^{n}$-symmetric and $\nabla^{A}$-flat.
\end{proposition}

\begin{proof} As the orbifold Poincar\'e duality is nondegenerate, the pairing
 $S^{A}$ is nondegenerate and $(-1)^{n}$-symmetric by \eqref{eq:15}.
The $\nabla^{A}$-flatness is equivalent to
\begin{align}
 z \partial_{z}S^{A}(\phi_{i},\phi_{j})=S^{A}(\nabla^{A}_{z\partial_{z}}\phi_{i},\phi_{j})+S^{A}(\phi_{i},\nabla^{A}_{z\partial_{z}}\phi_{j})\label{eq:13}
 \\
\partial_{k}S^{A}(\phi_{i},\phi_{j})=S^{A}(\nabla^{A}_{\partial_{k}}\phi_{i},\phi_{j})+S^{A}(\phi_{i},\nabla^{A}_{\partial_{k}}\phi_{j})\label{eq:14}
\end{align}
\noindent Using the rules \eqref{eq:15}, we have 
\begin{align*}
  z\partial_{z}S^{A}(\phi_{i},\phi_{j})&=nS^{A}(\phi_{i},\phi_{j}) \\
  S^{A}(z\nabla^{A}_{\partial_{z}}\phi_{i},\phi_{j}) &=
  \frac{1}{z}S^{A}(\Phi(\mathfrak{E})(\phi_{i}),\phi_{j})+S^{A}(R_{\infty}
  \phi_{i},\phi_{j})\\
S^{A}(\phi_{i},\nabla^{A}_{z\partial_{z}}\phi_{j})&=-\frac{1}{z}S^{A}(\phi_{i},\Phi(\mathfrak{E})(\phi_{j}))+S^{A}(\phi_{i},R_{\infty} \phi_{j})
\end{align*}

We denote by $R_{\infty}^{\ast}$ the adjoint of $R_{\infty}$ with respect
to $S^{A}(\cdot,\cdot)$. The following equalities (which follow from \cite[\S 7.6]{AGV})
\begin{align}\label{eq:16}
\langle \phi_{k}\bullet_{\underline{t}}\phi_{i},\phi_{j}\rangle& =\langle
\phi_{i},\phi_{k}\bullet_{\underline{t}}\phi_{j}\rangle\\
R_{\infty}+R_{\infty}^{*}&=n \id
\end{align}
(to be compared with the homogeneity property (\ref{eq:orbifold,PD})) imply \eqref{eq:13}.
The left hand side of \eqref{eq:14} vanishes because
$S^{A}(\phi_{i},\phi_{j})$ does not depend on the coordinates
$\underline{t}$. The equalities \eqref{eq:16} implies that the right
hand side also vanishes.
\end{proof}

From propositions \ref{prop:flat,connection} and \ref{prop:A,metric} we get

\begin{corollary}
  The tuple $(M_{A}, H^{A}, \nabla^{A}, S^{A}, n)$ is a quantum differential system on
  $\ppit^{1}\times M_{A}$.
\end{corollary}

\begin{definition}\label{defi:big,A,D,module} The quantum differential system $(M_{A}, H^{A}, \nabla^{A}, S^{A}, n)$ is
  called {\em the big $A$-model quantum differential system associated to}
  $\mathcal{X}$.
\end{definition}

\begin{remark}\label{AmodIrvsDM}
  Iritani defines also a $A$-model quantum differential system  (which he calls a
  ``$A$-model $D$-module'' \cite[definition 2.2]{Ir}, the distinction
  between these two terminologies will become clear later, see remark
  \ref{BmodIrvsDM}) and his definition is very similar to ours. There
  are some mild differences: the first one is that Iritani considers
  the opposite of our Higgs field and, in order to identify $H^{A}$
  with $\pi^{*}TM_{A}$, he uses $\phi_{i}\mapsto \partial_{i}$ whereas
  we use $\phi_{i}\mapsto -\partial_{i}$ (we choose the minus sign
  because usually the infinitesimal period map on the $B$-side is
  defined with a minus sign).  The second one is that Iritani
  considers the matrix $R_{\infty}-\frac{n}{2}\id$ which has symmetric
  eigenvalues with respect to $0$ (in our case, the eigenvalues are
  symmetric with respect to $n/2$). $\blacklozenge$
\end{remark}

\subsection{The small $A$-model quantum differential system} 

On a manifold $X$, the small quantum product is the restriction of the
big one to $H^{2}(X,\cit)$, that is $\bullet_{ \underline{t}}$ where
$\underline{t}\in H^{2}(X,\cit)$.  The classes in $H^{2}(X,\cit)$ play
a special role because they satisfy the divisor axiom for
Gromov-Witten invariants.  For orbifolds, the
divisor axiom works only for classes in the second cohomology group of
the untwisted sector (see Theorem 8.3.1 of \cite{AGV}), that is $H^{2}(\mathcal{X},\cit)$ (and not $H
^{2}_{\orb}(\mathcal{X},\cit)$).

\subsubsection{Restriction of the big $A$-model quantum differential system} 

We first restrict the big $A$-model quantum differential system $(M_{A}, H^{A},\nabla^{A}, S^{A}, n)$ to $M_{A}^{\smal}:=
H^{2}(\mathcal{X},\cit)$ and we get a quantum differential system on
$\ppit^{1}\times M_{A}^{\smal}$ denoted by 
$$(M_{A}^{\smal},H^{A,\smal},\nabla^{A,\smal},S^{A,\smal}, n).$$
 Let
$\underline{t}^{\smal}:=(t_{1}, \ldots ,t_{s})$ be the coordinates on
$M_{A}^{\smal}$. The restricted connection is
\begin{equation}\label{eq:connection}
  \nabla^{A,\smal} =d_{M_{A}^{\smal}}+d_{\ppit^{1}} - \frac{1}{z}\pi^{\ast}\Phi^{\smal} + \left(\frac{1}{z}\Phi^{\smal}(\mathfrak{E}^{\smal})+R_{\infty}\right)\frac{dz}{z}
\end{equation}
where $\Phi^{\smal}$ (resp. $\mathfrak{E}^{\smal}$ ) is the
restriction of $\Phi$ (resp.$\mathfrak{E}$) on $TM_{A}^{\smal}$. 
In coordinates, we have
$$
\Phi^{\smal}=\sum_{i=1}^{s}\Phi^{(i)}(\underline{t}^{\smal})
dt_{i}\mbox{ and }\mathfrak{E}^{\smal}=\sum_{i=1}^{s}
r_{i}\partial_{i}.
$$
Notice that $\mathfrak{E}^{\smal}$ is uniquely determined by
$c_{1}(T\mathcal{X})$ and that $\Phi^{\smal}(\mathfrak{E}^{\smal})$ is the
small quantum multiplication by $c_{1}(T\mathcal{X})$.

\subsubsection{An action of $\Pic(\mathcal{X})$} 
For manifolds, the quantum product is equivariant with respect to the
action of the Picard group. In this section, following Iritani
\cite{Ir}, we extend this action to the orbifold case.

Let $L$ be a line bundle on the orbifold
$\mathcal{X}$. For any point $x\in \mathcal{X}$, we have an action of
$Aut(x)$ on the fiber of $L$ at $x$ denoted by $L_{x}$ that is an
element on $GL(L_{x})$. Hence, for any point $(x,g)\in
\mathcal{X}_{v}\subset \mathcal{IX}$, we have an element $f_{v}(L)\in
\qit \cap [0,1[$ such that the action of $g$ on $L_{x}$ is the
multiplication by $e^{2\sqrt{-1}\pi f_{v}(L)}$.  The rational number
$f_{v}(L)$ depends only on $v\in T$ (see \cite[section 7]{AGV}).

\begin{remark}\label{rem:line,bundle,character}
  If $\mathcal{X}$ is a toric orbifold, then we have
  $\mathcal{X}=[Z/G]$ where $G:=\Hom(\Pic(\mathcal{X}),\cit^{\ast})$
  and $Z$ is a quasi-affine variety in some $\cit^{m}$ (cf. \cite{BCS}
  and \cite{FMN} for a more precise definition). The inertia stack is
  parametrized by a finite subset $T$ of $G$. A line bundle $L$ on
  $\mathcal{X}$ is given by a character $\chi_{L}$ of $G$ (see
  \cite{FMN}). In this special case, $f_{v}(L)$ is defined by the
  equality $\chi_{L}(v)=e^{2\pi\sqrt{-1}f_{v}(L)}$. $\blacklozenge$
\end{remark}

\noindent We define now an action of $\Pic(\mathcal{X})$ on $(M_{A}^{\smal}, H^{A,\smal},\nabla^{A,\smal},S^{A,\smal}, n) $ as follows:
\begin{enumerate}
\item on the fibers of $H^{A,\smal}$, for $\alpha\oplus \bigoplus_{v \in T}
  \alpha_{v}\in H^{*}(\mathcal{X},\cit)\oplus \bigoplus_{ v\in T}
  H^{*}(\mathcal{X}_{v},\cit)$ the action is given by
  \begin{align}\label{eq:action,fibre}
L\cdot \left(\alpha\oplus \bigoplus_{v \in T} \alpha_{v}\right)=\alpha\oplus
\bigoplus_{v\in T}e^{2\pi\sqrt{-1}f_{v}(L)}\alpha_{v}
  \end{align}
\item on $M_{A}^{\smal}= H^{2}(\mathcal{X},\cit)$ we define 
  \begin{align}
    \Pic(\mathcal{X})\times H^{2}(\mathcal{X},\cit)&\longrightarrow
    H^{2}(\mathcal{X},\cit) \label{eq:action,base}\\
    \left(L,\sum_{i=1}^{s}t_{i}\phi_{i}\right)&\longmapsto
    \left(\sum_{i=1}^{s}t_{i}\phi_{i}\right)-2\pi\sqrt{-1}c_{1}(L)=\sum_{i=1}^{s}(t_{i}-2\pi\sqrt{-1}L_{i})\phi_{i} \nonumber
    \end{align} where $c_{1}(L)=\sum_{i=1}^{s}L_{i}\phi_{i}$.
\end{enumerate}

\begin{proposition}[see proposition 2.3 of \cite{Ir}]\label{prop:action,classes} (1) The small quantum
  product is equivariant with respect to this action: for any classes
  $\alpha,\beta\in H^{*}_{\orb}(\mathcal{X},\cit)$, for any point
  $\underline{t}^{\smal}\in H^{2}(\mathcal{X},\cit)$ and for any $L \in
  \Pic(\mathcal{X})$, we have
\begin{displaymath} 
  (L\cdot\alpha)\bullet_{L\cdot \underline{t}^{\smal}}(L\cdot\beta)=L\cdot(\alpha\bullet_{\underline{t}^{\smal}}\beta).
\end{displaymath}
\noindent (2) The pairing $S^{A,\smal}(\cdot,\cdot)$ is invariant with respect to this action. 
\end{proposition}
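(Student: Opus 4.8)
The plan is to reduce the identities to homogeneous classes supported in single sectors and then to bookkeep the roots of unity introduced by the two actions, showing that they cancel. By $\cit$-bilinearity of the small quantum product and $\cit$-linearity of the action \eqref{eq:action,fibre} on the fibres, it suffices to treat the case in which $\alpha$, $\beta$, $\gamma$ are homogeneous and supported respectively in sectors $\mathcal{X}_{v_{1}}$, $\mathcal{X}_{v_{2}}$, $\mathcal{X}_{v_{3}}$ (the untwisted sector being $v=1$, where $f_{v}(L)=0$ and the action is trivial). I would prove part (2) first, since it is simpler and is used in part (1).

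For part (2), note that since the action \eqref{eq:action,fibre} does not involve $z$, the invariance of $S^{A,\smal}=z^{n}\langle\cdot,\cdot\rangle$ amounts to the invariance of the orbifold Poincaré pairing $\langle\cdot,\cdot\rangle$. Recall that this pairing is built from the involution $\iota\colon\mathcal{IX}\to\mathcal{IX}$, $(x,g)\mapsto(x,g^{-1})$, which sends $\mathcal{X}_{v}$ to $\mathcal{X}_{v^{-1}}$; hence $\langle\alpha_{v},\beta_{v'}\rangle\neq 0$ forces $v'=v^{-1}$. The only input needed is $f_{v}(L)+f_{v^{-1}}(L)\in\zit$, which follows from $\chi_{L}(v)\chi_{L}(v^{-1})=\chi_{L}(1)=1$ (Remark \ref{rem:line,bundle,character}). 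Then \eqref{eq:action,fibre} gives
\[
  \langle L\cdot\alpha_{v},L\cdot\beta_{v^{-1}}\rangle=e^{2\pi\sqrt{-1}(f_{v}(L)+f_{v^{-1}}(L))}\langle\alpha_{v},\beta_{v^{-1}}\rangle=\langle\alpha_{v},\beta_{v^{-1}}\rangle,
\]
and all other pairings vanish, which proves the invariance.

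For part (1), I would express the structure constants of $\bullet_{\underline{t}^{\smal}}$ through genus-$0$ three-point orbifold Gromov-Witten invariants and the divisor axiom for classes in $H^{2}(\mathcal{X},\cit)$ (Theorem 8.3.1 of \cite{AGV}):
\[
  \langle\alpha\bullet_{\underline{t}^{\smal}}\beta,\gamma\rangle=\sum_{d}\langle\alpha,\beta,\gamma\rangle_{0,3,d}\,e^{\langle\underline{t}^{\smal},d\rangle},
\]
the sum running over effective curve classes $d$. The action on the fibres multiplies each invariant by $e^{2\pi\sqrt{-1}(f_{v_{1}}(L)+f_{v_{2}}(L)+f_{v_{3}}(L))}$, while the action on the base replaces $e^{\langle\underline{t}^{\smal},d\rangle}$ by $e^{\langle\underline{t}^{\smal},d\rangle}e^{-2\pi\sqrt{-1}\langle c_{1}(L),d\rangle}$. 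The decisive point is the degree (age) formula: for every $d$ contributing a nonzero invariant one has
\[
  f_{v_{1}}(L)+f_{v_{2}}(L)+f_{v_{3}}(L)-\langle c_{1}(L),d\rangle\in\zit,
\]
because $\langle c_{1}(L),d\rangle=\deg f^{\ast}L$ for an orbifold stable map $f\colon C\to\mathcal{X}$ of class $d$, and the fractional part of the degree of the orbifold line bundle $f^{\ast}L$ is prescribed by the monodromies $f_{v_{i}}(L)$ at the three marked points. All the phases therefore cancel for each contributing $d$, giving
\[
  \langle(L\cdot\alpha)\bullet_{L\cdot\underline{t}^{\smal}}(L\cdot\beta),L\cdot\gamma\rangle=\langle\alpha\bullet_{\underline{t}^{\smal}}\beta,\gamma\rangle.
\]
By part (2), $\langle L\cdot(\alpha\bullet_{\underline{t}^{\smal}}\beta),L\cdot\gamma\rangle=\langle\alpha\bullet_{\underline{t}^{\smal}}\beta,\gamma\rangle$ as well. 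Since $L\cdot$ is invertible, the classes $L\cdot\gamma$ run over a basis as $\gamma$ does, and the nondegeneracy of the orbifold Poincaré pairing forces $(L\cdot\alpha)\bullet_{L\cdot\underline{t}^{\smal}}(L\cdot\beta)=L\cdot(\alpha\bullet_{\underline{t}^{\smal}}\beta)$.

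The main obstacle is the congruence displayed above: it is the only genuinely orbifold-geometric input and encodes the compatibility between the fractional degrees of $f^{\ast}L$ along orbifold curves and the characters $\chi_{L}$ attached to the sectors; one must also be careful about the sign conventions for the evaluation maps so that the fibre phases and the divisor-axiom factor match. Everything else is manipulation of roots of unity together with the nondegeneracy of the pairing. In the toric case $\mathcal{X}=[Z/G]$ relevant to weighted projective spaces this congruence simplifies considerably: the characters $\chi_{L}$ of $G=\Hom(\Pic(\mathcal{X}),\cit^{\ast})$ make both $f_{v}(L)$ and $\langle c_{1}(L),d\rangle$ explicit, as in Remark \ref{rem:line,bundle,character}.
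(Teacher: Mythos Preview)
Your argument is correct and rests on the same geometric input as the paper's: the integrality $f_{v_{1}}(L)+f_{v_{2}}(L)+f_{v_{3}}(L)-\langle c_{1}(L),d\rangle\in\zit$, which the paper absorbs by citing the proof of Proposition~2.3 in \cite{Ir}. Part~(2) is identical to the paper's proof.

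The organizational difference is that for part~(1) you pair both sides with $L\cdot\gamma$, invoke part~(2), and conclude by nondegeneracy, whereas the paper expands $\alpha\bullet_{\underline{t}^{\smal}}\beta$ directly in the dual basis $(\phi_{i},\phi^{i})$, uses $L\cdot\phi^{i}=L^{-1}\cdot\phi_{i}$ to rewrite the sum over the shifted dual basis $(L\cdot\phi_{i},L\cdot\phi^{i})$, and reads off the equivariance without appealing to nondegeneracy. Your route has the advantage of making the key congruence explicit and explaining its origin (the fractional degree of $f^{\ast}L$ on an orbifold curve), which the paper leaves inside the reference to \cite{Ir}; the paper's route is slightly more direct once that congruence is granted.
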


\begin{proof}
  Recall that we denote by $\phi^{i}$ the Poincar\'e dual of
  $\phi_{i}$.  By definition of the small quantum product, we have
\begin{align*}
  (L\cdot\alpha)\bullet_{L\cdot
    \underline{t}^{\smal}}(L\cdot\beta)&=\sum_{d\in
    H_{2}(\mathcal{X},\qit)}\sum_{i=0}^{N}\langle L\cdot \alpha,
  L\cdot\beta,\phi_{i}\rangle_{0,3,d}
  \phi^{i}e^{\int_{d}(\underline{t}_{\smal}-2\pi\sqrt{-1}c_{1}(L))}.
\end{align*}
By definition of the Poincar\'e duality, we have that
$L\cdot \phi^{i}=L^{-1}\cdot \phi_{i}$. Using the proof of
Proposition 2.3 in \cite{Ir}, we deduce that
\begin{align*}
  (L\cdot\alpha)\bullet_{L\cdot \underline{t}^{\smal}}(L\cdot\beta)
  &=\sum_{d\in H_{2}(\mathcal{X},\qit)}\sum_{i=0}^{N}\langle L\cdot
  \alpha, L\cdot\beta,L\cdot\phi_{i}\rangle_{0,3,d}
  \left(L\cdot\phi^{i}\right)e^{\int_{d}(\underline{t}_{\smal}-2\pi\sqrt{-1}c_{1}(L))}\\
  &=\sum_{d\in H_{2}(\mathcal{X},\qit)}\sum_{i=0}^{N}\langle \alpha,
  \beta,\phi_{i}\rangle_{0,3,d}
  \left( L\cdot \phi^{i} \right)e^{\int_{d}\underline{t}_{\smal}}\\
&=L\cdot(\alpha\bullet_{\underline{t}_{\smal}\beta}).
\end{align*}

For the second statement, we show that for any $\alpha_{v}\in
H^{*}(\mathcal{X}_{v},\cit)$, for any $\alpha_{w}\in
H^{*}(\mathcal{X}_{w},\cit)$ and for any $L\in \Pic(\mathcal{X})$, we
have :
\begin{displaymath}
  S(L\cdot \alpha_{v}, L\cdot \alpha_{w})=S(\alpha_{v},\alpha_{w}).
\end{displaymath}
We have that $S(\alpha_{v},\alpha_{w})\neq 0$ implies that the
involution of $I\mathcal{X}$ sending $(x,g)\to (x,g^{-1})$ maps
$\mathcal{X}_{v}$ to $\mathcal{X}_{w}$ (see the definition of the
orbifold Poincar\'e duality in \cite{CR2}). This implies that
$f_{v}(L)+f_{w}(L)\in\{0,1\}$. Hence, we have 
\begin{displaymath}
  S(L\cdot \alpha_{v}, L\cdot \alpha_{w})=e^{2\pi\sqrt{-1}(f_{v}(L)+f_{w}(L))}S(\alpha_{v},\alpha_{w})=S(\alpha_{v},\alpha_{w}).
\end{displaymath}  
\end{proof}

\begin{remark}\label{rem:A,action,invariant,Picard}
  By the divisor axiom, the variables corresponding to
  $H^{2}(\mathcal{X},\cit)$ appear as exponential in the genus $0$ Gromov-Witten
  potential. For $i\in\{1, \ldots ,s\}$, we have indeed terms of the form
  $e^{t_{i}\int_{\beta}\phi_{i}}$ for $\beta\in
  H_{2}(\mathcal{X},\qit)$ and the action above acts on these terms as follows
\begin{equation}\label{eq:21}
  L\cdot e^{\sum_{i=1}^{s}t_{i}\int_{\beta}\phi_{i}}=e^{\sum_{i=1}^{s}t_{i}\int_{\beta}\phi_{i}}e^{-2\pi\sqrt{-1}\int_{\beta}c_{1}(L)}.
\end{equation}
Since, for orbifolds, the classes $\beta$ and the Chern classes are
rational, the action of the Picard group is not trivial.  So the
multiplication by $\exp\left(-2\pi\sqrt{-1}\int_{\beta}c_{1}(L)\right)$
has to be corrected by a natural action on the fibers of $H^{A,\smal}$
on the twisted cohomology
classes in order to get the proposition above.  For manifolds, the
homology class $\beta$ and the Chern classes are integral, hence the
action \eqref{eq:21} is trivial: the quantum product for
manifold is invariant with respect to this action. $\blacklozenge$
\end{remark}

\subsubsection{The quotient structure}
It follows from proposition \ref{prop:action,classes} that the quantum differential system
$(M_{A}^{\smal},H^{A,\smal},S^{A,\smal},n)$ is
$\Pic(\mathcal{X})$-equivariant. Hence, it defines a quotient quantum differential system
denoted by
$$\mathcal{S}^{A}:=(\mathcal{M}_{A}, \widetilde{H}^{A,\smal},\widetilde{\nabla}^{A,\smal},\widetilde{S}^{A,\smal},n)$$ 
where 
\begin{displaymath}
  \mathcal{M}_{A}:=H^{2}(\mathcal{X},\cit)/ \Pic(\mathcal{X})\simeq
  (\cit^{*})^{s}.
\end{displaymath}

\begin{corollary}
  The tuple  $\mathcal{S}^{A}$ is a quantum differential system on
  on $\ppit^{1}\times\mathcal{M}_{A}$.
\end{corollary}

\begin{definition}\label{AMSstructure}
  The quantum differential system $(\mathcal{M}_{A},
  \widetilde{H}^{A,\smal},\widetilde{\nabla}^{A,\smal},\widetilde{S}^{A,\smal},
  n)$ is called the {\em small $A$-model quantum differential system}. 
\end{definition}

\begin{remark}\label{rem:global,section,H/pic} 
  For $i\in\{0, \ldots ,N\}$ ($N+1$ is the dimension of the full orbifold cohomology), $\phi_{i}$ is a global section
  of $H^{A,\smal}$. We have 
    \begin{displaymath}
      \phi_{i} \mbox{ is a global section of } \widetilde{H}^{A,\smal}
      \Longleftrightarrow L\cdot \phi_{i}=\phi_{i},\  \forall L \in \Pic(\mathcal{X}).
    \end{displaymath}
    We deduce that the classes $\phi_{i}$ in the cohomology of the
    untwisted sector are global sections of $\widetilde{H}^{A,\smal}$.
    Notice that if $s_{1}$ and $s_{2}$ are global sections of
    $\widetilde{H}^{A,\smal}$, then so is
    $s_{1}\bullet_{\underline{t}^{\smal}}s_{2}$. To find a basis of
    global section of $\widetilde{H}^{A,\smal}$, we will look for quantum product of global sections i.e. $s_{1}\bullet_{\underline{t}^{\smal}}s_{2}$. $\blacklozenge$
\end{remark}

In the following, we define coordinates on $\mathcal{M}_{A}$ (which depend on a choice)  and then
we want to write the connection $\widetilde{\nabla}^{A,\smal}$ in
these coordinates (see Formula \eqref{eq:nable,tilde,variable,q}).

For $i\in\{1, \ldots ,s\}$, we put $q_{i}:=\exp(t_{i})$. However, the
$\underline{q}:=(q_{1}, \ldots ,q_{s})$ are not coordinates on
$\mathcal{M}_{A}$ because they are not $\Pic(\mathcal{X})$-invariant.
To be precise for any $L\in \Pic(\mathcal{X})$, we have
\begin{equation}\label{eq:action,q}
  L\cdot q_{i}=q_{i}e^{-2\pi\sqrt{-1}L_{i}}
\end{equation}
where $L_{i}$ are rational numbers\footnote{If the $L_{i}$ are
  integers then the $q$'s are $\Pic(\mathcal{X})$-invariant i.e. they
  are coordinates on $\mathcal{M}_{A}$.} defined by
$c_{1}(L)=\sum_{i=1}^{s}L_{i}\phi_{i}\in H^{2}(\mathcal{X},\qit)$, see
\eqref{eq:action,base}.  However if we choose $\mathcal{L}_{1}, \ldots
,\mathcal{L}_{s}$ as generators of of
$\Pic(\mathcal{X})/\tor(\Pic(\mathcal{X}))$\footnote{Observe that the
  first Chern class of a torsion line bundle vanishes.} and put
$\phi_{i}:=c_{1}(\mathcal{L}_{i})$, then the $L_{i}$'s are now
integers {\em i.e,} $(q_{1}, \ldots ,q_{s})$ are
coordinates\footnote{For manifolds, the situation is easier because
  one can choose $\phi_{i}$ as an integral cohomology class. Since
  $c_{1}(L)$ is an integral cohomology class, the $L_{i}$'s are
  integers.} on $\mathcal{M}_{A}$.  In such a choice of coordinates on
$\mathcal{M}_{A}$, the connection $\widetilde{\nabla}^{A, \smal}$ is
given by
\begin{equation}
  \label{eq:nable,tilde,variable,q}
  \widetilde{\nabla}^{A,\smal} =d_{\mathcal{M}_{A}}+d_{\ppit^{1}} -
  \frac{1}{z}\widetilde{\Phi}^{\smal} +
  \left(\frac{1}{z}\widetilde{\Phi}^{\smal}(\widetilde{\mathfrak{E}}^{\smal})+R_{\infty}\right)\frac{dz}{z}
\end{equation}
where
$$\widetilde{\Phi}^{\smal}=\sum_{i=1}^{s}\Phi^{(i)}
\frac{dq_{i}}{q_{i}} \mbox{ and } \widetilde{\mathfrak{E}}^{\smal}=\sum_{i=1}^{s}
r_{i}q_{i}\frac{\partial}{\partial q_{i}}.$$

\begin{remark}\label{rem:Iritani,global action}
  We first restrict the big $A$-model quantum differential system $(M_{A},
  H^{A},\nabla^{A}, S^{A},n)$ to $\ppit^{1}\times H^{2}(\mathcal{X},\cit)$ and
  then we quotient it by the action of $\Pic(\mathcal{X})$. 
  In \cite{Ir}, Iritani defines a global action, called Galois action,
  of $\Pic(\mathcal{X})$ on $(M_{A}, H^{A}, \nabla^{A},S^{A},n)$, giving a quantum differential system
  on $M_{A}/\Pic(\mathcal{X})$.  If we restrict it to
  $\mathcal{M}_{A}=H^{2}(\mathcal{X},\cit)/\Pic(\mathcal{X})$ we get
  the small $A$-model quantum differential system above.  $\blacklozenge$
\end{remark}

\subsection{Combinatorics}  

\label{combinatoire}
In order to describe the small $A$-model quantum differential system and its mirror, we introduce some combinatorics.\\

Let $w_{0},w_{1},\cdots ,w_{n}$ be positive integers.
Put $\mu:=w_{1}+\cdots+w_{n}$ (we use the letter $\mu$ because this will be the Milnor number on the B-side).
Denote by  
$$F:=\left\{\frac{\ell}{w_{i}}|\, 0\leq\ell\leq w_{i}-1,\ 0\leq i\leq n\right\}.$$
We denote by $f_{1},\cdots , f_{k}$ the elements of $F$ arranged
in increasing order:
$$0=f_{1}<f_{2}<\cdots<f_{k}<f_{k+1}:=1.$$
For $f\in\qit$, we define 
\begin{align}\label{eq:mi,Sf}
  S_{f}:=\{j|\ w_{j}f\in\zit\}\subset \{0,\cdots ,n\}\mbox{ and }
 m_{i}:=\prod_{j\in S_{f_{i}}}w_{j}.
\end{align}
The \textit{multiplicity}, denoted by $d_{i}$, of $f_{i}$ is the
positive integer defined by $d_{i}:=\# S_{f_{i}}.$
In particular we have $S_{f_{1}}=\{0,\cdots ,n\}$, $m_{1}=w_{0}\cdots w_{n}$ and $d_{1}=n+1$. Notice that
$$d_{1}+\cdots +d_{k}=\mu.$$
Let $c_{0},c_{1},\cdots , c_{\mu -1}$ be the sequence
$$\underbrace{f_{1},\cdots ,f_{1}}_{d_{1}},\underbrace{f_{2},\cdots ,f_{2}}_{d_{2}},\cdots ,\underbrace{f_{k},\cdots ,f_{k}}_{d_{k}}$$
arranged in increasing order.  It can be obtained as follows (see
\cite[p. 3]{DoSa2}): define inductively the sequence $(a(k),
i(k))\in\nit^{n+1}\times \{0,\cdots ,n\}$ by $a(0)=(0,\cdots ,0)$ ,
$i(0)=0$ and
$$a(k+1)=a(k)+ \mbox{\bf{1}}_{i(k)} \mbox{ where } i(k):=\min \{i|
a(k)_{i}/w_{i}=\min_{j} a(k)_{j}/w_{j}\}.$$ In particular,
$a(1)=(1,0,\cdots ,0)$, $a(n+1)=(1,\cdots ,1)$, $a(\mu )=(1,
w_{1},\cdots , w_{n})$ and $\sum_{i=0}^{n}a(k)_{i}=k$.  Then we have :
$$c_{k}=a(k)_{i(k)}/w_{i(k)}.$$

\begin{lemma} \label{lessk} We have $c_{0}=\cdots =c_{n}=0$, $c_{n+1}=\frac{1}{max_{i}w_{i}}$
and
$c_{k}+c_{\mu +n-k}=1$
for $k\geq n+1$.
\end{lemma}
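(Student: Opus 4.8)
The plan is to treat the three assertions separately; the first two are obtained by unwinding the definitions, while the third is a Poincaré-type duality that I would derive from the involution $f\mapsto 1-f$ on $F$. For the first assertion, recall that $d_{1}=\#S_{f_{1}}=n+1$ because $f_{1}=0$ forces $S_{f_{1}}=\{0,\dots,n\}$; by construction the sequence $(c_{j})$ begins with exactly $d_{1}$ copies of $f_{1}=0$, so $c_{0}=\cdots=c_{n}=0$. For the second assertion, the point is that $c_{n+1}$ is the smallest \emph{positive} value occurring among the $c_{j}$, hence equals $f_{2}=\min\bigl(F\setminus\{0\}\bigr)$. Writing the elements of $F$ as $\ell/w_{i}$ with $1\leq\ell\leq w_{i}-1$, the minimum over the positive ones is attained at $\ell=1$ with $w_{i}=\max_{i}w_{i}$, giving $f_{2}=1/\max_{i}w_{i}$. (Here one tacitly assumes some $w_{i}\geq 2$, i.e.\ the genuinely orbifold case; otherwise $F=\{0\}$ and the index $n+1$ falls outside the range $0,\dots,\mu-1$.)

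The heart of the matter is the third assertion, which I would obtain from a symmetry of $F$. First I would check that $f\mapsto 1-f$ is an involution of $F\setminus\{0\}=\{f_{2},\dots,f_{k}\}$: if $f=\ell/w_{i}$ with $1\leq\ell\leq w_{i}-1$, then $1-f=(w_{i}-\ell)/w_{i}$ again lies in $F$. Moreover, for any $j$ one has $w_{j}f\in\zit$ if and only if $w_{j}(1-f)=w_{j}-w_{j}f\in\zit$, so $S_{f}=S_{1-f}$ and in particular the multiplicities agree; thus the involution preserves $d$. Since it also reverses the order on $(0,1)$, it sends $f_{i}$ to $f_{k+2-i}$ and satisfies $d_{i}=d_{k+2-i}$ for $2\leq i\leq k$.

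Consequently the multiset of positive values $\{c_{n+1},\dots,c_{\mu-1}\}$, in which each $f_{i}$ with $i\geq 2$ occurs with multiplicity $d_{i}$, is globally invariant under $x\mapsto 1-x$. For any finite multiset in $(0,1)$ stable under this order-reversing involution, the increasingly sorted list $g_{0}\leq\cdots\leq g_{L-1}$ satisfies $g_{j}+g_{L-1-j}=1$, because $x\mapsto 1-x$ must carry the $j$-th smallest element to the $j$-th largest. Taking $L=\mu-n-1$ and $g_{j}=c_{n+1+j}$, and re-indexing with $k=n+1+j$ so that $g_{L-1-j}=c_{\mu-1-j}=c_{\mu+n-k}$, turns this palindromic relation into $c_{k}+c_{\mu+n-k}=1$ for $n+1\leq k\leq\mu-1$, which is the claim.

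The only genuinely delicate point is the index bookkeeping in this last step, namely matching $g_{L-1-j}$ with $c_{\mu+n-k}$; the structural input — the multiplicity-preserving involution $f\mapsto 1-f$ — is the real content and becomes transparent once $S_{f}=S_{1-f}$ is observed. I do not expect to need the explicit description via the sequence $(a(k),i(k))$ for this argument, although it could be used as an alternative route to the symmetry by checking $a(\mu-1-k)=a(\mu)-a(k)$ directly.
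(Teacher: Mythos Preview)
Your argument is correct. The paper does not actually give a proof of this lemma; it simply cites \cite{DoSa2}, p.~2. Your route via the order-reversing, multiplicity-preserving involution $f\mapsto 1-f$ on $F\setminus\{0\}$ (resting on the observation $S_{f}=S_{1-f}$) is a clean, self-contained way to obtain the palindromic relation, and the index bookkeeping you flag as delicate is handled correctly. The recursive description via $(a(k),i(k))$, which the paper sets up following \cite{DoSa2}, would give an alternative derivation (essentially your suggested check $a(\mu)-a(k)$ versus $a(\mu+n-k)$ up to reordering), but since that machinery is built for other purposes there, your direct multiset argument is arguably the more transparent proof of this particular statement.
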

\begin{proof}
See \cite[p. 2]{DoSa2}. 
\end{proof}

Define now, for $k=0,\cdots ,\mu -1$, $\alpha_{k}:=k-\mu c_{k}$. 

\begin{corollary} \label{lesalphak} 
We have $\alpha_{0}=0, \cdots , \alpha_{n}=n$, $\alpha_{k+1}\leq \alpha_{k}+1$ for all $k$,
$$\alpha_{k}+\alpha_{\mu +n-k}=n$$
for $k=n+1, \cdots ,\mu -1$ and
$$\alpha_{k}+\alpha_{n-k}=n$$
for $k=0,\cdots ,n$.
\end{corollary}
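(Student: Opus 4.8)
The plan is to treat all four assertions as direct computations from the definition $\alpha_{k}:=k-\mu c_{k}$, feeding in the values and relations for the $c_{k}$ supplied by Lemma \ref{lessk}. No single step is hard, so the work is mostly bookkeeping of index ranges.

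First I would dispose of the statement $\alpha_{0}=0,\cdots,\alpha_{n}=n$ together with the final relation $\alpha_{k}+\alpha_{n-k}=n$ for $0\leq k\leq n$. By Lemma \ref{lessk} we have $c_{0}=\cdots=c_{n}=0$, so for all these indices $\alpha_{k}=k-\mu\cdot 0=k$. This gives $\alpha_{0}=0,\cdots,\alpha_{n}=n$ at once, and since $n-k$ also lies in $\{0,\cdots,n\}$ when $0\leq k\leq n$, we get $\alpha_{k}+\alpha_{n-k}=k+(n-k)=n$.

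Next I would prove the monotonicity $\alpha_{k+1}\leq\alpha_{k}+1$. Writing $\alpha_{k+1}-\alpha_{k}=1-\mu(c_{k+1}-c_{k})$, the inequality is equivalent to $c_{k+1}\geq c_{k}$. This holds by construction: the sequence $c_{0},\cdots,c_{\mu-1}$ is precisely the list $f_{1},\cdots,f_{1},\cdots,f_{k},\cdots,f_{k}$ arranged in increasing order, hence non-decreasing, so $\mu(c_{k+1}-c_{k})\geq 0$ and the claim follows. For the remaining relation $\alpha_{k}+\alpha_{\mu+n-k}=n$ with $n+1\leq k\leq\mu-1$, I would compute
$$\alpha_{k}+\alpha_{\mu+n-k}=(k-\mu c_{k})+\bigl((\mu+n-k)-\mu c_{\mu+n-k}\bigr)=\mu+n-\mu\,(c_{k}+c_{\mu+n-k}),$$
and then invoke the relation $c_{k}+c_{\mu+n-k}=1$ from Lemma \ref{lessk}, valid for $k\geq n+1$, to obtain $\mu+n-\mu=n$.

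Since each part reduces to substitution, there is no genuine obstacle. The only point requiring a moment of care is checking that the index $\mu+n-k$ remains admissible, i.e. lies in $\{0,\cdots,\mu-1\}$ so that $c_{\mu+n-k}$ is defined and the cited relation applies: as $k$ runs over $n+1,\cdots,\mu-1$, the value $\mu+n-k$ runs over $n+1,\cdots,\mu-1$ as well, confirming that the symmetry is well posed.
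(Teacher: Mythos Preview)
Your proof is correct and is precisely the direct computation from $\alpha_{k}=k-\mu c_{k}$ and Lemma~\ref{lessk} that the paper intends; the paper itself records no separate argument for this corollary. The index-range check you include for $\mu+n-k$ is a nice bit of care but is not strictly needed beyond what you wrote.
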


\noindent The $\alpha_{k}$'s will give the {\em spectrum at infinity}
of a certain regular function on the B-side (see section \ref{Bmodel})
and half of the {\em orbifold degree} on the A-side see Proposition \ref{lem:coates}.  Notice that
these numbers are integers if and only if $w_{i}|\mu$ for $i=0,\cdots
,n$.

\begin{example}\label{example}
  Let $w_{0}=1$, $w_{1}=2$,
  $w_{2}=2$. We have : 
\begin{itemize}
\item $\mu =5$, 
\item $f_{1}=0$, $d_{1}=3$,
  $f_{2}=\frac{1}{2}$, $d_{2}=2$, $S_{f_{1}}=\{0,1,2\}$ and
  $S_{f_{2}}=\{1,2\}$,
\item  $a(0)=(0,0,0)$, $a(1)=(1,0,0)$, $a(2)=(1,1,0)$, $a(3)=(1,1,1)$ , $a(4)=(1,2,1)$
\item $c_{0}=c_{1}=c_{2}=0$, $c_{3}=c_{4}=\frac{1}{2}$  and $\alpha_{0}=0,\ \alpha_{1}=1,\ \alpha_{2}=2,\ \alpha_{3}=\frac{1}{2},\  \alpha_{4}=\frac{3}{2}.$
\end{itemize} 
We will follow this example all along this paper. 
\end{example}

\subsection{The small $A$-model quantum differential system for weighted projective
  spaces}

We describe in this section the small $A$-model quantum differential system
$$\mathcal{S}^{A}_{w}=(\mathcal{M}_{A}, \widetilde{H}^{A,\smal},\widetilde{\nabla}^{A,\smal}, \widetilde{S}^{A,\smal},n)$$
associated with the weighted projective space $\ppit(w):=\ppit (w_{0}, \ldots ,w_{n})$, where $w_{0},\cdots , w_{n}$ are positive integers with $w_{0}=1$. 
The index $_{w}$ recalls these weights.

\subsubsection{The toric description}\label{sec:toric-description}

We use here the notations and the definitions given in section
\ref{combinatoire}. Recall that we assume $w_{0}=1$.  We follow the
definition of \cite{Coa} for weighted projective spaces, that is with
negative weights\footnote{In this paper, we use negative weights as
  \cite{Coa} because the mirror formula are easier for negative
  weights namely in \eqref{eq:19} of \S \ref{sec:correspondance} we will have $P^{\bullet j} \mapsto \omega_{j}$. In \cite[\S
  6.c]{Man}, the second author took positive weights and the
  correspondence was a bit more tricky.},
\begin{align}\label{defi,wps}
  \ppit(w_{0},w_{1}, \ldots ,w_{n}):=[\cit^{n+1}-\{0\}/\cit^{\ast}]
\end{align}
where the action is given by $\lambda(x_{0}, \ldots
,x_{n}):=(\lambda^{-w_{0}}x_{0}, \ldots ,\lambda^{-w_{n}}x_{n})$.

It is a toric Deligne-Mumford stacks in the sense of \cite{FMN} and \cite{BCS}. 
Its stacky fan is given by
\begin{itemize}
\item the lattice $N:=\zit^{n}$.
\item the morphism $\beta:\zit^{n+1}\to N$ that sends the canonical
  basis $e_{i}$ to $(0, \ldots
  ,0,1,0, \ldots ,0)$  and $e_{0}$ to $(-w_{1}, \ldots ,-w_{n})$.
\item the fan $\Sigma$ in $N$ is the complete fan where the rays are
  generated by $\beta(e_{i})$. 
\end{itemize}

\begin{remark}\label{rem:toric}
  (1) The Picard group of $\ppit (w)$ is $\zit$ and it is generated by the line bundle $\mathcal{O}(1)$.\\
  (2) \label{rem:toric,2}For $i\in\{0, \ldots ,n\}$, each $\beta(e_{i})$
  corresponds to a toric divisor $D_{i}$. This toric divisor is simply the
  canonical inclusion of $\ppit(w_{0}, \ldots ,\widehat{w}_{i}, \ldots
  ,w_{n})\hookrightarrow \ppit(w)$.  The line bundle associated to the toric
  divisor $D_{i}$ is $\mathcal{O}(w_{i})$. The situation when $w_{0}=1$ is
  particularly nice, because the toric divisor $D_{0}$ is $\mathcal{O}(1)$ which
  generates the Picard group.  We denote by $P:=c_{1}(\mathcal{O}(1))\in
  H^{2}(\ppit(w),\qit)\subset H^{2}_{\orb}(\ppit(w),\cit)$.
$\blacklozenge$
\end{remark}

\noindent For any subset $I=\{i_{1}, \ldots ,i_{\ell}\}\subset \{0, \ldots
,n\}$, we put $\ppit(w_{I}):=\ppit(w_{i_{1}}, \ldots ,w_{i_{\ell}})$. Recall the
sets $F$ and $S_{f}$ defined in \eqref{eq:mi,Sf}.  Following \cite{Man} and
\cite{Coa}, the inertia stack is
\begin{displaymath}
  \mathcal{I}\ppit(w):=\bigsqcup_{f\in F}\ppit(w_{S_{f}})
\end{displaymath}
For any $f\in F$, denote by $\mathbf{1}_{f}$ the image of the cohomology class
$\mathbf{1}\in H^{0}(\ppit(w_{S_{f}}),\cit)$ in $H^{*}_{\orb}(\ppit(w),\cit)$.
A basis of the orbifold cohomology $H^{*}_{\orb}(\ppit(w), \cit )$, which is a
$\cit$-vector space of dimension $\mu$, is given by the elements
\begin{equation}
  \label{eq:basis}
  \mathbf{1}_{f_{i}}P^{j}:=1_{f_{i}}\cup_{\orb}\stackrel{j- \mbox{times}}{\overbrace{P\cup_{\orb}\cdots\cup_{\orb}P}},\mbox{ for }\ i\in \{1, \cdots ,k\} \mbox{
    and } j\in\{0,\cdots ,d_{i}-1\}.
\end{equation}

 The orbifold degree is now
defined by
$$\deg^{\orb} \mathbf{1}_{f_{i}}P^{j}:= 2j + 2\sum_{k=0}^{n} \{-w_{k}f_{i}\}$$
where $\{r\}:=r-\lfloor r\rfloor$ is the fractional part of $r$. The
orbifold Poincar\'e duality (see \cite{Man}) is given by
\begin{equation}\label{eq:18}
  \langle \mathbf{1}_{f_{i}}P^{k}, \mathbf{1}_{f_{j}}P^{\ell}\rangle=
    \begin{cases}
      1/m_{i} & \mbox{ if  } f_{i}+f_{j}\in \nit \mbox{ and } k+\ell=d_{i}-1\\
      0 & \mbox{ otherwise}
     \end{cases}
\end{equation}
where $m_{i}=\prod_{j\in S_{f_{i}}}w_{j}$ (see \eqref{eq:mi,Sf}). Notice that if $f_{i}+f_{j}\in \nit$ then
$S_{f_{i}}=S_{f_{j}}$ so that the right hand side of \eqref{eq:18} is symmetric
in $i$ and $j$.

\subsubsection{ Description of the small $A$-model quantum differential system}
\label{quantumWPS}

Let $t_{1}$ be the coordinate on $H^{2}(\ppit(w),\cit)$,
$q:=\exp(t_{1})$ and $C^{\orb}(q)$ be the matrix of the endomorphism
$P\bullet_{q}$ of $H^{*}_{\orb}(\ppit(w), \cit )$ in the basis
$(\mathbf{1}_{f_{i}}P^{j})$. This matrix is computed in \cite{Coa}
(see also \cite{Guest}): we have
\begin{displaymath}
  C^{\orb}(q):=\left(
  \begin{matrix}
    0&0&0&\cdots&0& a_{\mu}q^{1-c_{\mu-1}}\\
    a_{1}q^{c_{1}-c_{0}}&0&0&\cdots&0&0 \\
    0&a_{2}q^{c_{2}-c_{1}}&0&&\vdots&\vdots \\
    \vdots&\ddots&\ddots&\ddots&\vdots& \vdots\\
    \vdots&&\ddots&\ddots&0&\vdots \\
    0&\cdots&\cdots&0&a_{\mu-1}q^{c_{\mu-1}-c_{\mu-2}}& 0\\
  \end{matrix}\right)
\end{displaymath}
where
 \begin{equation}\label{eq:20}
  a_{i}:= \begin{cases}
   1/m_{j} & \mbox{ if } i=d_{1}+\cdots+d_{j}\\
 1 & \mbox{ otherwise.}  
   \end{cases}
 \end{equation}
Following the remark \ref{rem:global,section,H/pic}, we define, for $i\in\{0,\cdots ,\mu -1\}$,
 $$(P^{\bullet_{q}})^{i}:=\underbrace{P\bullet_{q}\cdots\bullet_{q}P}_{i\ \mbox{times}} \mbox{ with } (P^{\bullet_{q}})^{0}:=\mathbf{1}_{f_{1}}.$$

\begin{lemma}\label{lem:coates}[See \cite{Coa}]
 (1) We have 
\begin{equation}\label{eq:17}
  (P^{\bullet_{q}})^{i}=q^{c_{i}}s_{i}\mathbf{1}_{c_{i}}P^{r(i)}
\end{equation}
where $r(i):=\#\{k\mid k<i \mbox{ and } c_{k}=c_{i}\}$ and $s_{i}=\prod_{k=0}^{n}w_{k}^{-\lceil c_{i}w_{k}\rceil}$.
In particular, for each $q\neq 0$, the  cohomology classes
$((P^{\bullet_{q}})^{i})_{0\leq i\leq\mu -1}$ form a basis 
of the vector space $H^{*}_{\orb}(\ppit(w),\cit)$.\\ 
(2) For every $i$, $\deg^{\orb}(P^{\bullet_{q}})^{i}=\deg^{\orb}\mathbf{1}_{c_{i}}P^{r(i)}=2\alpha_{i}$ (c.f. \S\ref{combinatoire} for the definition of $\alpha$'s). 
\end{lemma}

\begin{proof}
  The only part of the proof that is not in \cite{Coa} is that
  $\deg^{\orb}\mathbf{1}_{c_{i}}P^{r(i)}=2\alpha_{i}$.
  \begin{align*}
    \frac{1}{2}\deg^{\orb}\mathbf{1}_{c_{i}}P^{r(i)}&= 
\sum_{j=0}^{n}\{-c_{i}w_{j}\}+r(i)=-\sum_{j=0}^{n}\{c_{i}w_{j}\}+n+1-d_{j}+r(i)\\
    &=-c_{i}\mu+\sum_{j=0}^{n}\lfloor c_{i}w_{j}\rfloor+n+1-d_{i}+r(i)
    =-c_{i}\mu+d_{1}+\cdots+d_{i-1}+r(i)\\ 
    &=-c_{i}\mu+i=\alpha_{i}
  \end{align*}
  
\end{proof}

The following proposition refines the
remark \ref{rem:global,section,H/pic} for weighted projective spaces.

\begin{proposition}\label{prop:action,bases,P,1f}
  The Picard group $\Pic(\ppit(w))$ acts on the two basis
  $(\mathbf{1}_{f_{i}}P^{j})$ and $((P^{\bullet_{q}})^{i})$ of $H^{*}_{\orb}(\ppit(w))$ via the following
  formulas: 
\begin{align*}
  \mathcal{O}(d)\cdot \mathbf{1}_{f}P^{k} =
  e^{-2\pi\sqrt{-1}df}\mathbf{1}_{f}P^{k} &\mbox{ and }
  \mathcal{O}(d)\cdot
(P^{\bullet_{q}})^{i} =(P^{\bullet_{\mathcal{O}(d)\cdot q}})^{i}.
\end{align*}
\noindent for any $d\in \zit$. For $r\in \qit$, we have also $\mathcal{O}(d)\cdot q^{r} = q^{r}e^{-2\pi\sqrt{-1}dr}$.
\end{proposition}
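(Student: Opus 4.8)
The plan is to deduce both identities from the very definition of the $\Pic(\ppit(w))$-action, namely the fibrewise action \eqref{eq:action,fibre} and the base action \eqref{eq:action,base}--\eqref{eq:action,q}, combined with the equivariance of the small quantum product established in Proposition \ref{prop:action,classes}. For the first formula I would begin by observing that each basis element $\mathbf{1}_{f_{i}}P^{k}$ lies entirely in a single component of the inertia stack, namely the twisted sector $\ppit(w_{S_{f_{i}}})$ indexed by $f_{i}$ (indeed, by \eqref{eq:basis} one has $\mathbf{1}_{f_{i}}P^{k}\in H^{2k}(\ppit(w_{S_{f_{i}}}),\cit)$). Consequently the fibrewise action \eqref{eq:action,fibre} multiplies $\mathbf{1}_{f}P^{k}$ by the single scalar $e^{2\pi\sqrt{-1}f_{v}(\mathcal{O}(d))}$, where $v\in T$ is the element corresponding to $f$, and it only remains to compute $f_{v}(\mathcal{O}(d))$. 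Here I would use the toric presentation \eqref{defi,wps} together with Remark \ref{rem:line,bundle,character}: the group is $G=\Hom(\Pic(\ppit(w)),\cit^{\ast})=\cit^{\ast}$, the stabilizer $v$ attached to $f$ is (in the chosen orientation) $e^{-2\pi\sqrt{-1}f}$, and $\mathcal{O}(d)$ corresponds to the character $\chi_{\mathcal{O}(d)}(\lambda)=\lambda^{d}$; evaluating yields $\chi_{\mathcal{O}(d)}(v)=e^{-2\pi\sqrt{-1}df}$, so that $e^{2\pi\sqrt{-1}f_{v}(\mathcal{O}(d))}=e^{-2\pi\sqrt{-1}df}$, which is exactly the claimed scalar. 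For $f=f_{1}=0$ this is the untwisted sector, where the action is trivial and the factor is $1$.

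For the second formula I would argue by induction on $i$. The case $i=0$ is $\mathcal{O}(d)\cdot(P^{\bullet_{q}})^{0}=\mathcal{O}(d)\cdot\mathbf{1}_{f_{1}}=\mathbf{1}_{f_{1}}$, since $f_{1}=0$. The essential input is that $P=c_{1}(\mathcal{O}(1))\in H^{2}(\ppit(w),\cit)$ is an untwisted class, hence fixed by the action ($\mathcal{O}(d)\cdot P=P$, a special case of the first formula with $f=0$), and that the base action reads $\mathcal{O}(d)\cdot q=q\,e^{-2\pi\sqrt{-1}d}$ because $c_{1}(\mathcal{O}(d))=dP$ in \eqref{eq:action,q}. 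Writing $(P^{\bullet_{q}})^{i}=P\bullet_{q}(P^{\bullet_{q}})^{i-1}$ and applying Proposition \ref{prop:action,classes}(1) gives
\begin{align*}
  \mathcal{O}(d)\cdot(P^{\bullet_{q}})^{i}
  &=(\mathcal{O}(d)\cdot P)\bullet_{\mathcal{O}(d)\cdot q}\bigl(\mathcal{O}(d)\cdot(P^{\bullet_{q}})^{i-1}\bigr)\\
  &=P\bullet_{\mathcal{O}(d)\cdot q}(P^{\bullet_{\mathcal{O}(d)\cdot q}})^{i-1}
  =(P^{\bullet_{\mathcal{O}(d)\cdot q}})^{i},
\end{align*}
where the induction hypothesis is used in the second equality. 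Finally, the identity $\mathcal{O}(d)\cdot q^{r}=q^{r}e^{-2\pi\sqrt{-1}dr}$ for $r\in\qit$ follows immediately by raising $\mathcal{O}(d)\cdot q=q\,e^{-2\pi\sqrt{-1}d}$ to the power $r$.

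The main obstacle is the sign bookkeeping in the first formula. Under the negative-weight convention \eqref{defi,wps}, one must pin down both the stabilizer element $v$ attached to $f$ and the character of $\mathcal{O}(d)$, since a slip in either convention flips the sign of the exponent and would break the compatibility with the base action needed in the induction. Once these conventions are fixed consistently, the rest is a purely formal consequence of the definitions and of the equivariance already proved in Proposition \ref{prop:action,classes}.
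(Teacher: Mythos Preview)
Your proof is correct and follows essentially the same approach as the paper's: invoke Remark~\ref{rem:line,bundle,character} and \eqref{eq:action,fibre} for the first formula, \eqref{eq:action,base}--\eqref{eq:action,q} for the action on $q^{r}$, and Proposition~\ref{prop:action,classes} for the second formula (the paper does not spell out the induction on $i$, but that is exactly what is meant). One minor discrepancy worth noting: the paper identifies the character of $\mathcal{O}(d)$ as $z\mapsto z^{-d}$ (a direct consequence of the negative-weight convention \eqref{defi,wps}) rather than your $\lambda\mapsto\lambda^{d}$; you compensate by taking the stabilizer attached to $f$ to be $e^{-2\pi\sqrt{-1}f}$ instead of $e^{2\pi\sqrt{-1}f}$, so the resulting scalar $e^{-2\pi\sqrt{-1}df}$ agrees, and you rightly flag this sign bookkeeping as the only delicate point.
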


\begin{proof}
  Because we take the definition of weighted projective spaces with
  negative weights (see Formula \eqref{defi,wps}), the line bundle
  $\mathcal{O}(d)$ corresponds to the character $\chi:\cit^{\ast}\to
  \cit^{\ast}$ which sends $z\to z^{-d}$. Using remark
  \ref{rem:line,bundle,character}, the action of $\mathcal{O}(d)$ on
  $\mathbf{1}_{f}P^{k}$ follows from the definition of the action (see
  formula \eqref{eq:action,fibre}). For the action on $q$, it follows
  from the definition (see formula \eqref{eq:action,base} and
  \eqref{eq:action,q}).  The action on $(P^{\bullet_{q}})^{i}$ follows
  from proposition \ref{prop:action,classes}.
\end{proof}

\begin{remark}\label{rem:global,sections}From \eqref{eq:17}, we put $s(q):=(P^{\bullet_{q}})^{i}=q^{c_{i}}s_{i}\mathbf{1}_{c_{i}}P^{r(i)}$. We have 
  \begin{align*}
    s(\mathcal{O}(d)\cdot q)&=(\mathcal{O}(d)\cdot
    q^{c_{i}})s_{i}\mathbf{1}_{c_{i}}P^{r(i)}\\
&= q^{c_{i}}e^{-2\pi\sqrt{-1}dc_{i}}s_{i}\mathbf{1}_{c_{i}}P^{r(i)}\\
&=q^{c_{i}}s_{i}\left(\mathcal{O}(d)\cdot
  \mathbf{1}_{c_{i}}P^{r(i)}\right) \\
&=\mathcal{O}(d)\cdot s(q).
  \end{align*}
  As expected from remark \ref{rem:global,section,H/pic}, for
  $i\in\{0, \ldots ,N\}$, the section $(P^{\bullet_{q}})^{i}$ is a
  $\Pic(\ppit(w))$-equivariant section, hence it induces a global
  section of the bundle $\widetilde{H}^{A,\smal}$. $\blacklozenge$
\end{remark}

As shown by the previous proposition, we prefer the basis
$((P^{\bullet_{q}})^{i})$ because  it provides a basis of global
sections of the small $A$-model quantum differential system. We first compute the
pairing $\widetilde{S}^{A,\smal}(\cdot,\cdot)$ in this basis.

\begin{proposition}\label{prop:pairingglobal} 
  The pairing $\widetilde{S}^{A,\smal}(\cdot,\cdot)$ in the basis
  $((P^{\bullet_{q}})^{i})$ is
\begin{displaymath}
  \widetilde{S}^{A,\smal} \left((P^{\bullet_{q}})^{i}, (P^{\bullet_{q}})^{j}\right)=
  \begin{cases}
    z^{n}m_{1}^{-1} & \mbox{if }i+j=n \\
    z^{n}m_{1}^{-1}q w^{-w} & \mbox{ if } i+j=n+\mu\\ 
 0 & \mbox{otherwise}
  \end{cases}
\end{displaymath}
where $w^{-w}:=\prod_{i=0}^{n}w_{i}^{-w_{i}}$.
\end{proposition}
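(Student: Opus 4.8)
The plan is to compute the pairing directly using the formula \eqref{eq:17} that expresses the global-section basis $((P^{\bullet_{q}})^{i})$ in terms of the classes $\mathbf{1}_{c_{i}}P^{r(i)}$, together with the definition $\widetilde{S}^{A,\smal}(a,b)=z^{n}\langle a,b\rangle$ (inherited from $S^{A}(\phi_{i},\phi_{j})=z^{n}\langle\phi_{i},\phi_{j}\rangle$ after restriction and quotient) and the explicit orbifold Poincar\'e duality \eqref{eq:18}. First I would write
\begin{equation*}
  \widetilde{S}^{A,\smal}\left((P^{\bullet_{q}})^{i},(P^{\bullet_{q}})^{j}\right)
  = z^{n}q^{c_{i}+c_{j}}s_{i}s_{j}\,\langle \mathbf{1}_{c_{i}}P^{r(i)},\,\mathbf{1}_{c_{j}}P^{r(j)}\rangle,
\end{equation*}
and then feed the duality formula \eqref{eq:18} into the bracket on the right.

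The combinatorial heart of the argument is to determine exactly when the bracket is nonzero and to evaluate $q^{c_i+c_j}$, $s_i s_j$, and $m^{-1}$ in those cases. By \eqref{eq:18} the bracket is nonzero precisely when $c_{i}+c_{j}\in\nit$ and $r(i)+r(j)=d-1$, where $d$ is the common multiplicity of the sector $c_i=c_j$ (since $c_i+c_j\in\nit$ forces $S_{c_i}=S_{c_j}$). I would split into the two relevant ranges. For $i+j=n$: by Corollary \ref{lesalphak} and Lemma \ref{lessk} we have $c_{0}=\cdots=c_{n}=0$, so both indices lie in the untwisted block, $c_i+c_j=0\in\nit$, the exponent $q^{c_i+c_j}=q^{0}=1$, and the sector factor $s_i=s_j=\prod_k w_k^{-\lceil 0\rceil}=1$; the duality value is $1/m_{1}$ since $S_{0}=\{0,\dots,n\}$ gives $m=m_{1}$. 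One then checks $r(i)+r(j)=d_1-1=n$ using $r(i)=i$ on the untwisted block, matching the condition $i+j=n$. This yields $z^{n}m_{1}^{-1}$. For $i+j=n+\mu$: here I use the symmetry $c_{k}+c_{\mu+n-k}=1$ from Lemma \ref{lessk}, valid for $k\geq n+1$, so that $c_i+c_j=1\in\nit$ and $q^{c_i+c_j}=q$; the residual factor should assemble to $s_i s_j \cdot m^{-1}=q^{0}\cdot w^{-w}m_{1}^{-1}$ after simplifying $s_i s_j=\prod_k w_k^{-\lceil c_i w_k\rceil-\lceil c_j w_k\rceil}$ and recognizing, via $c_i w_k+c_j w_k=w_k$ on the common support, that the product collapses to $w^{-w}=\prod_k w_k^{-w_k}$ up to the $m$-factor.

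The main obstacle I anticipate is the bookkeeping in the second case: verifying that $\lceil c_i w_k\rceil+\lceil c_j w_k\rceil$ equals $w_k$ (not $w_k+1$) for every $k$ in the support, which is where the integrality $c_i+c_j=1$ and the definition of $S_{c_i}$ interact, and then confirming that the leftover $m$-dependence cancels to leave exactly $m_{1}^{-1}$ rather than $m_i^{-1}$. I would handle this by noting that on the twisted sector $S_{c_i}$ the factor $w^{-w}$ absorbs the full product $\prod_k w_k^{-w_k}$ while the duality $1/m_i$ and the remaining $s_is_j$ contributions reorganize into $m_1^{-1}$; a clean way is to track the total degree and use that $(P^{\bullet_q})^i$ and $(P^{\bullet_q})^j$ pair nontrivially only when $\deg^{\orb}$ of the two sum to $2n$, which by part (2) of the preceding lemma pins down $r(i)+r(j)$ and the sector. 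Finally I would verify the vanishing in all other cases: if $c_i+c_j\notin\nit$ the bracket is zero by \eqref{eq:18}, and the two admissible index sums $i+j\in\{n,\,n+\mu\}$ are forced by combining $r(i)+r(j)=d-1$ with the block structure of the sequence $(c_k)$, completing the proof.
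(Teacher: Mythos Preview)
Your approach is correct and essentially the same as the paper's: both expand via \eqref{eq:17}, apply the orbifold Poincar\'e pairing \eqref{eq:18}, and split into the cases $c_i+c_j=0$ and $c_i+c_j=1$. For your anticipated bookkeeping obstacle, the paper records the clean identity $s_i s_j = w^{-w}\prod_{k\notin S_{c_i}} w_k^{-1}$ when $c_i+c_j=1$ (indeed $\lceil c_i w_k\rceil + \lceil c_j w_k\rceil$ equals $w_k$ for $k\in S_{c_i}$ and $w_k+1$ for $k\notin S_{c_i}$, since the two summands add to the integer $w_k$), which combined with the duality factor $\prod_{k\in S_{c_i}} w_k^{-1}$ yields exactly $w^{-w}m_1^{-1}$; and it cites \cite[Proposition~6.1(3)]{Man} rather than re-deriving the block combinatorics that force $i+j\in\{n,\,n+\mu\}$.
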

\begin{proof}
  Recall that $\widetilde{S}^{A,\smal}(\cdot,\cdot):=z^{n}\langle \cdot,
  \cdot \rangle$.  We will use the formulas \eqref{eq:18} and
  \eqref{eq:17}.  The first case follows from the
  equivalence between $i+j=n$ and $c_{i}=c_{j}=0$.  From
  \cite[Proposition 6.1.(3)]{Man}, we have that $i+j=n+\mu$ is
  equivalent to $c_{i}+c_{j}=1$ and $r(i)+r(j)=d_{i}-1$. We conclude
  using the fact that $ s_{i}s_{j}=w^{-w}\prod_{k\notin S_{c_{i}}}w_{k}^{-1}$
if $c_{i}+c_{j}=1$.
\end{proof}

\begin{remark} Notice that if $w_{0}=\cdots=w_{n}=1$ 
the bases $((P^{\bullet_{q}})^{i})_{0\leq i\leq n}$ and
$(\mathbf{1}_{f_{i}}P^{j})$ are equal and that the pairing does not depend on
$q$. $\blacklozenge$
\end{remark}

Put
\begin{align*}
 A_{\infty}&
 :=\frac{1}{2}\Diag(\deg^{\orb}1,\deg^{\orb}P, \ldots
 ,\deg^{\orb}(P^{\bullet_{q}})^{\mu-1})=\Diag(\alpha_{0}, \ldots ,\alpha_{\mu-1})
\end{align*}

\noindent The following proposition completes the description of the small $A$-model quantum differential system $\cal{S}^{A}_{w}$.

\begin{proposition} 
\label{prop:nabla,basis,nonflat}
(1) The matrix of the connection $\widetilde{\nabla}^{A,\smal}$ in the basis $(\mathbf{1}_{f_{i}}P^{j})$ is
 \begin{equation}\label{eq:flat}
 -\frac{1}{z}C^{\orb}(q)\frac{dq}{q} + \left(\frac{1}{z}\mu
     C^{\orb}(q)+A_{\infty}\right)\frac{dz}{z}
\end{equation}
\noindent (2) The matrix of the connection $\widetilde{\nabla}^{A,\smal}$ in the
basis $((P^{\bullet_{q}})^{i})$ is
\begin{equation*}
  \left(-\frac{\mu C(q)}{ z}-A_{\infty}+H \right)\frac{d q}{\mu q} +
  \left(\frac{\mu C(q)}{z}+A_{\infty}\right)\frac{dz}{z}
\end{equation*}
where $H:=\Diag(0, \ldots ,\mu-1)$ and
$$C(q)=\left ( \begin{array}{cccccc}
0   & 0   & 0 & \cdots & 0   & q/w^{w}\\
1   & 0   & 0 & \cdots & 0   & 0\\
0   & 1  & 0 & \cdots & 0   & 0\\
..  & ... & . & \cdots & .   & .\\
..  & ... & . & \cdots & .   & .\\
0   & 0   & . & \cdots & 1   & 0 
\end{array} \right ).$$
\end{proposition}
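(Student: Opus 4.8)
The plan is to derive (1) directly from the expression \eqref{eq:nable,tilde,variable,q} of the connection, and then to obtain (2) from (1) by the explicit \emph{diagonal} change of basis dictated by \eqref{eq:17}. For (1), first I would record that since $\Pic(\ppit(w))=\zit$ is generated by $\mathcal{O}(1)$ (Remark \ref{rem:toric}), the space $M_A^{\smal}=H^2(\ppit(w),\cit)$ is one dimensional with coordinate $q$, so $s=1$ in \eqref{eq:nable,tilde,variable,q}. The single coefficient $r_1$ is read off from $c_1(T\ppit(w))=\mu P$, hence $r_1=\mu$ and $\widetilde{\mathfrak{E}}^{\smal}=\mu q\,\partial_q$. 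By definition of $C(q)$, the matrix of $\widetilde{\Phi}^{\smal}=(P\bullet_q)\frac{dq}{q}$ in the basis $(\mathbf{1}_{f_i}P^{j})$ is $C(q)\frac{dq}{q}$, and evaluating the Higgs field on $\widetilde{\mathfrak{E}}^{\smal}$ gives $\widetilde{\Phi}^{\smal}(\widetilde{\mathfrak{E}}^{\smal})=\mu(P\bullet_q)$, of matrix $\mu C(q)$. Finally $R_{\infty}$ acts on a homogeneous class by half its orbifold degree; ordering the basis $(\mathbf{1}_{f_i}P^{j})$ so that its $i$-th vector is $\mathbf{1}_{c_i}P^{r(i)}$, part (2) of the preceding lemma shows that the matrix of $R_{\infty}$ is exactly $A_{\infty}$. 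Substituting these three matrices into \eqref{eq:nable,tilde,variable,q} yields (1).

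To pass to (2), I would use \eqref{eq:17}, which reads $(P^{\bullet_q})^{i}=q^{c_i}s_i\,\mathbf{1}_{c_i}P^{r(i)}$: with the above ordering this is the diagonal change of basis $\Lambda:=\Diag(q^{c_0}s_0,\ldots,q^{c_{\mu-1}}s_{\mu-1})$ carrying $(\mathbf{1}_{f_i}P^{j})$ to $((P^{\bullet_q})^{i})$. Under such a change a connection matrix $\Omega$ becomes $\Lambda^{-1}\Omega\Lambda+\Lambda^{-1}d\Lambda$. Since $\Lambda$ does not depend on $z$, only the $dq$-part contributes to $\Lambda^{-1}d\Lambda$, and as each $s_i$ is constant one gets $\Lambda^{-1}d\Lambda=\Diag(c_0,\ldots,c_{\mu-1})\frac{dq}{q}=R^{\varphi}\frac{dq}{q}$. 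Because $A_{\infty}$ and $\Lambda$ are both diagonal they commute, so $\Lambda^{-1}A_{\infty}\Lambda=A_{\infty}$. It therefore remains only to identify $\Lambda^{-1}C(q)\Lambda$ with $C^{\varphi}(q)$.

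This last conjugation is the crux. The entries of $C(q)$ being $a_i q^{c_i-c_{i-1}}$ on the subdiagonal and $a_{\mu}q^{1-c_{\mu-1}}$ in the top-right corner, conjugation by $\Lambda$ rescales the $(i,i-1)$ entry to $a_i s_{i-1}/s_i$ and the $(0,\mu-1)$ entry to $a_{\mu}s_{\mu-1}\,q$ (using $c_0=0$ and $s_0=1$). The recursion $(P^{\bullet_q})^{i}=P\bullet_q(P^{\bullet_q})^{i-1}$ combined with \eqref{eq:17} forces $a_i=s_i/s_{i-1}$, so all subdiagonal entries become $1$; the remaining point is the identity $a_{\mu}s_{\mu-1}=1/w^{w}$, equivalently the quantum relation $(P^{\bullet_q})^{\mu}=\frac{q}{w^{w}}\mathbf{1}_{f_1}$. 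I would check this directly from $a_{\mu}=1/m_k$, $s_{\mu-1}=\prod_{\ell}w_{\ell}^{-\lceil c_{\mu-1}w_{\ell}\rceil}$ and $w^{w}=\prod_i w_i^{w_i}$ (or cite the quantum cohomology computation of \cite{Coa}); this is the main obstacle and the only genuinely arithmetic step, and one may sanity-check it on Example \ref{example}, where $a_{5}s_{4}=\frac14\cdot\frac14=\frac1{16}=1/w^{w}$.

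Granting this identity, $\Lambda^{-1}C(q)\Lambda=C^{\varphi}(q)$, and assembling the three pieces transforms $\Omega=-\frac{1}{z}C(q)\frac{dq}{q}+\bigl(\frac{\mu}{z}C(q)+A_{\infty}\bigr)\frac{dz}{z}$ into $\bigl(-\frac{1}{z}C^{\varphi}(q)+R^{\varphi}\bigr)\frac{dq}{q}+\bigl(\frac{\mu}{z}C^{\varphi}(q)+A_{\infty}\bigr)\frac{dz}{z}$, which is precisely the matrix asserted in (2). I expect (1) to be essentially immediate from \eqref{eq:nable,tilde,variable,q}, and the whole content of (2) to reside in the conjugation $\Lambda^{-1}C(q)\Lambda=C^{\varphi}(q)$, the extra term $\Lambda^{-1}d\Lambda=R^{\varphi}\frac{dq}{q}$ being exactly what produces the new summand $R^{\varphi}$ on the $\frac{dq}{q}$ component.
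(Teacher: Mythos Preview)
Your proposal is correct and follows exactly the paper's route: for (1) you plug $c_{1}(T\ppit(w))=\mu P$ into \eqref{eq:nable,tilde,variable,q}, and for (2) you perform the change of basis \eqref{eq:17}. The paper's own proof says only that (2) ``follows from a straightforward computation via the change of basis \eqref{eq:17}''; your explicit identification $\Lambda^{-1}C(q)\Lambda=C^{\varphi}(q)$ via $a_i=s_i/s_{i-1}$ and $a_{\mu}s_{\mu-1}=1/w^{w}$, together with $\Lambda^{-1}d\Lambda=R^{\varphi}\frac{dq}{q}$, is precisely that computation spelled out.
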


\begin{proof}
  (1) Since $c_{1}(T\ppit (w))=\mu P$ by \cite[lemma 3.21]{Man}, we
  have
$$\widetilde{\Phi}^{\smal}=(P\bullet_{q})\frac{dq}{q},\
\widetilde{\mathfrak{E}}^{\smal}=\mu P \mbox{ and }
\widetilde{\Phi}^{\smal}(\widetilde{\mathfrak{E}}^{\smal})= \mu
(P\bullet_{q}).$$ The proposition then follows from the definition of
$\widetilde{\nabla}^{A,\smal}$ (see equation \eqref{eq:nable,tilde,variable,q}). \\
(2) Follows now from a straightforward computation via the change of
basis \eqref{eq:17}.
\end{proof}

\begin{remark}\label{rem:A,side,flat}
(1) As we have seen in proposition \ref{prop:action,bases,P,1f}, the
  cohomology class $\mathbf{1}_{f_{i}}P^{j}$ does not define a
  global section of the small $A$-model quantum differential system, whereas
  $(P^{\bullet_{q}})^{i}$ does.
  This explains the fact that the matrix $C(q)$ (resp.
  $C(q)$) contains rational (resp. integer) powers of $q$.\\
(2) Another way to measure the difference between the bases
  $(\mathbf{1}_{f_{i}}P^{j})$ and $(P^{\bullet_{q}})^{i}$ is to
  consider the restriction $\bigtriangledown$ of
  $\widetilde{\nabla}^{\smal}$ to $\{\infty\}\times \mathcal{M}_{A}$.
  We have :
\begin{itemize}
\item  $\bigtriangledown (\mathbf{1}_{f_{i}}P^{j})=0$,
\item $\bigtriangledown (P^{\bullet_{q}})^{i}=R((P^{\bullet_{q}})^{i})\frac{dq}{q}$.
\end{itemize}
where  $R:=\mu^{-1}(-A_{\infty}+H)=\Diag(c_{0}, \ldots ,c_{\mu-1})$ is the residue matrix of $\bigtriangledown$ (see Corollary \ref{coro:Deligne,extension}). In other words, the basis $(\mathbf{1}_{f_{i}}P^{j})$ is
$\bigtriangledown$-flat whereas $((P^{\bullet_{q}})^{i})$ is not.  $\blacklozenge$
\end{remark}

 \begin{remark}\label{rem:limit}
   The matrix $C^{\orb}(0)$ is the matrix of the endomorphism $P\cup_{\orb}$ and does not generate the orbifold cohomology ring in general:
   from the matrix $C^{\orb}(0)$, we can not get all the orbifold products
   $1_{f_{i}}P^{j}\cup_{\orb}1_{f_{k}}P^{\ell}$.  $\blacklozenge$
 \end{remark}

 \begin{example}\label{exampleWPS} For $\ppit (1,2,2)$ we have
   $$C^{\orb}(q)=
     \left(\begin{matrix}
       0&0&0&0&\frac{1}{4}q^{1/2}\\
       1&0&0&0&0\\
       0&1&0&0&0\\
       0&0&\frac{1}{4}q^{1/2}&0&0\\
       0&0&0&1&0\\
     \end{matrix}\right)$$
\noindent In particular,
$$C^{\orb}(0)=
     \left(\begin{matrix}
       0&0&0&0&0\\
       1&0&0&0&0\\
       0&1&0&0&0\\
       0&0&0&0&0\\
       0&0&0&1&0\\
     \end{matrix}\right)$$
and we can not get the equality $\mathbf{1}_{1/2}\cup_{\orb}\mathbf{1}_{1/2}P=P^{2}$ (see example \ref{exampleprod} below) from $C(0)$.
 \end{example}

\section{$B$-model}
\label{Bmodel}

\subsection{The setting}
\label{setting}

Givental in \cite{Givental1},\cite{givental2} and Hori-Vafa in
\cite{Hori} have offered a mirror partner for toric manifolds and Iritani
\cite{Ir} has explained how to construct a mirror candidate for a toric
orbifold. We briefly recall this construction in the case of the weighted projective space $\ppit (1,w_{1},
\ldots ,w_{n})$. 

We start with the following exact sequence
$$0\longrightarrow \Pic(\ppit (w))\longrightarrow \zit^{n+1}\stackrel{\beta}{\longrightarrow}N\longrightarrow 0$$
where $\beta:\zit^{n+1}\to N$ is the map defined via the stacky fan (see section \ref{sec:toric-description}).
Applying the functor $\Hom_{\zit}(\cdot,\cit^{*})$, we get : 
$$1\longrightarrow (\cit^{*})^{n}\longrightarrow (\cit^{*})^{n+1}\stackrel{\pi}{\longrightarrow} \cit^{*}\longrightarrow 1$$
This gives our mirror candidate to $\ppit(w)$,

$$\xymatrix@1{
(\cit^{*})^{n+1} \ar[r]^-{\widetilde{F}} \ar[d]_\pi & \cit\\
\mathcal{M}_{B}:=\cit^{*}}$$

\noindent where $\widetilde{F}(u_{0}, \ldots ,u_{n})=\sum_{i=0}^{n} u_{i}$ and
$\pi(u_{0}, \ldots ,u_{n})=u_{0}u_{1}^{w_{1}}\cdots u_{n}^{w_{n}}$.  
Denote by
$x$ the coordinate on $\mathcal{M}_{B}$. As
all the fibers of $\pi$ are isomorphic
to the torus $U:=(\cit^{*})^{n}$, we can also consider 
$$F: U\times \mathcal{M}_{B} \longrightarrow \cit $$
defined by
\begin{equation}\label{eq:defi,F}
  F(u_{1}, \ldots ,u_{n},x)=u_{1}+\cdots+u_{n}+\frac{x}{u_{1}^{w_{1}}\cdots u_{n}^{w_{n}}}.
\end{equation}
which is a deformation of 
$f:U\rightarrow\cit$ defined by
$$f(u_{1},\cdots ,u_{n})=u_{1}+\cdots +u_{n}+\frac{1}{u_{1}^{w_{1}}\cdots u_{n}^{w_{n}}}.$$
We will write
$$u_{0}=\frac{1}{u_{1}^{w_{1}}\cdots u_{n}^{w_{n}}}.$$

\begin{remark} If we identify the monomial
  $\prod_{i=0}^{n}u_{i}^{a_{i}}$ with the point $(a_{0}, \ldots
  ,a_{n})\in \zit^{n+1}$, we see that each monomial $u_{i}$
  corresponds to the point $\beta(e_{i})\in N$ where $e_{i}$ is the
  canonical basis of $\zit^{n+1}$. We interpret $\beta(e_{i})$ as the
  toric divisor $D_{i}$ (see Remark \ref{rem:toric}).  In particular,
  the monomial $u_{0}$ corresponds to $D_{0}=\mathcal{O}(1)$ and we
  can expect that the multiplication by $u_{0}$ corresponds to the
  multiplication by $P:=c_{1}(\mathcal{O}(1))$: this will be shown in
  section \ref{mirrorsmall}.$\blacklozenge$
\end{remark}

\subsection{Gauss-Manin systems and Brieskorn lattices}

Let
$$G=\frac{\Omega^{n}(U)[x, x^{-1}, \tau ,\tau^{-1}]}{(d_{u}-\tau d_{u}F)\wedge\Omega^{n-1}(U)[x, x^{-1}, \tau ,\tau^{-1}]}$$
be the (Fourier-Laplace transform of the) Gauss-Manin system of $F$,
and
$$G_{0}=\frac{\Omega^{n}(U)[x, x^{-1}, \tau^{-1}]}{(\tau^{-1}d_{u}-d_{u}F)\wedge \Omega^{n-1}(U)[x, x^{-1}, \tau^{-1}]}$$
be (the Fourier-Laplace transform of) its Brieskorn lattice, where the notation $d_{u}$ means that the differential is taken with respect to the 
coordinates $u=(u_{1},\cdots ,u_{n})$ of $U$ only.
The $\cit[x,x^{-1},\tau,\tau^{-1}]$-module $G$ is equipped with a flat connection $\nabla^{B}$ defined by 
\begin{equation}\label{def:ConnGM}  
\nabla^{B}_{\partial_{\tau}}(\omega_{i}\tau^{i})=i\omega_{i}\tau^{i-1}-F\omega_{i}\tau^{i}\ \mbox{and} \ \nabla^{B}_{\partial_{x}}(\omega_{i}\tau^{i})=\mathcal{L}_{\partial_{x}}(\omega_{i})\tau^{i}-\frac{\partial F}{\partial x}\omega_{i}\tau^{i+1}
\end{equation}
where $\mathcal{L}$ denotes the Lie derivative.
Assume moreover that $G_{0}$ is free over $\cit [x,x^{-1},\tau ^{-1}]$. We will say that a basis $\omega$ of $G_{0}$ over $\cit [x,x^{-1},\tau ^{-1}]$ is a solution of the {\em Birkhoff problem} for $G_{0}$ if the matrix of $\nabla^{B}$ in the basis $\omega$ is
$$(A(x)\tau +B(x))\frac{d\tau}{\tau}+(C(x)\tau +D(x))dx$$
where $A(x)$, $B(x)$, $C(x)$ and $D(x)$ are matrices with coefficients in $\cit [x,x^{-1}]$ (see for instance \cite[Chapitre VI.2]{Sab1}).

The Gauss-Manin system of $f$ and its Brieskorn lattice are respectively defined by
$$G^{o}=\frac{\Omega^{n}(U)[\tau ,\tau^{-1}]}{(d-\tau df)\wedge\Omega^{n-1}(U)[\tau ,\tau^{-1}]}$$
and
$$G_{0}^{o}=\frac{\Omega^{n}(U)[\tau^{-1}]}{(\tau^{-1}d-df)\wedge \Omega^{n-1}(U)[\tau^{-1}]}.$$
$G^{o}$ is also equipped with a flat connection $\nabla^{B,o}$ defined by
$$\nabla^{B,o}_{\partial_{\tau}}(\omega_{i}\tau^{i})=i\omega_{i}\tau^{i-1}-f\omega_{i}\tau^{i}$$
\noindent (see for instance \cite[Section 2]{DoSa1}). There is of course a Birkhoff problem for $G_{0}^{o}$: a solution will be a 
basis $\omega^{o}$ of
$G_{0}^{o}$ over $\cit [\tau ^{-1}]$ in which the matrix of $\nabla^{B,o}$ is
$(A^{o}\tau +B^{o})\frac{d\tau}{\tau}$ where $A^{o}$ and $B^{o}$ are two constant matrices (and we assume here that $G_{0}^{o}$ is free of finite rank on $\cit [\tau^{-1}]$).

\subsection{A $B$-model quantum differential system}
\label{deformation}

We look for a trivial bundle on $\ppit^{1}\times \mathcal{M}_{B}$, equipped with a connection and a flat pairing, isomorphic to the one considered in section \ref{Amodel}. 
In general, a solution of the Birkhoff problem for the Brieskorn lattice $G_{0}$ yields such objects. However, such a solution is not unique and, on this side, we have to take care of some choices: for instance, two different solutions could produce two residue matrices along $\tau =0$ (the matrix $B(x)$ with the notations above) which are not conjugate. 
This has motivated the definition of {\em canonical} solutions in \cite{DoSa1}, given by Hodge theory using M. Saito's method (see \cite[Section 5]{DoSa2} for a precise description in our setting).
It should be emphasized that the best solution in our context, {\em i.e} the one which fits mirror symmetry (see theorem \ref{quantum}), is closely related with the canonical solutions of the Birkhoff problem for $G^{o}$ given in \cite{DoSa2} (see remark \ref{start} (1) below).

\subsubsection{A trivial bundle}
\label{canonicalbasis}

 Let
$$\Gamma_{0}=\{(y_{1},\cdots ,y_{n})\in\rit^{n} |  y_{1}+\cdots +y_{n}=1\}$$
and 
$$\chi_{0}=u_{1}\frac{\partial}{\partial u_{1}}+\cdots +u_{n}\frac{\partial}{\partial u_{n}},$$
$$\Gamma_{j}= \left\{(y_{1},\cdots ,y_{n})\in\rit^{n} |  y_{1}+\cdots +y_{j-1}+\left(1-\frac{\mu}{w_{j}}\right)y_{j}+\cdots +y_{n}=1\right\}$$
and
$$\chi_{j}= u_{1}\frac{\partial}{\partial u_{1}}+\cdots +u_{j-1}\frac{\partial}{\partial u_{j-1}}+\left(1-\frac{\mu}{w_{j}}\right)u_{j}\frac{\partial}{\partial u_{j}}+\cdots +u_{n}\frac{\partial}{\partial u_{n}}$$
for $j=1,\cdots ,n$. 
The $\Gamma_{j}$'s are the faces of dimension $n-1$ of the Newton polyhedron of $f$ at infinity (see \cite{Ko}). 
We define, for $j=0,\cdots ,n$,
$$h_{j}=\chi_{j}(F)-F.$$
We thus have $h_{0}=-\mu x u_{0}$ and $h_{j}=-\frac{\mu}{w_{j}}u_{j}$ if $j=1,\cdots ,n$. 
 Last we put, for $g=u_{1}^{r_{1}}\cdots u_{n}^{r_{n}}$,
$$\phi_{0}(g)=r_{1}+\cdots +r_{n}$$ 
and, for $j=1,\cdots ,n$,
$$\phi_{j}(g)=r_{1}\cdots +r_{j-1}+\left(1-\frac{\mu}{w_{j}}\right)r_{j}+\cdots +r_{n}.$$
We will write $\partial_{\tau}$ instead of $\nabla^{B}_{\partial_{\tau}}$ for short.

\begin{lemma} \label{lemmebase} Let $\omega_{0}$ be the class of $\frac{du_{1}}{u_{1}}\wedge\cdots\wedge\frac{du_{n}}{u_{n}}$ in $G$. One has, for any monomial $g$, the equality
 $$(\tau\partial_{\tau}+\phi_{j}(g))g\omega_{0}=\tau h_{j}g\omega_{0}$$
in $G$, where $g\omega_{0}$ denotes the class of $g\frac{du_{1}}{u_{1}}\wedge\cdots\wedge\frac{du_{n}}{u_{n}}$ in $G$. In particular, 
$\tau\partial_{\tau}\omega_{0}=\tau h_{0}\omega_{0}.$
\end{lemma}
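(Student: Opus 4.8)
The plan is to deduce the identity directly from the single defining relation of the Gauss-Manin module $G$, applied to a well-chosen $(n-1)$-form obtained by contracting $\omega_{0}$ with the vector field $\chi_{\Gamma_{j}}$. First I would record the elementary fact that each $\chi_{\Gamma_{j}}=\sum_{i}a_{i}u_{i}\partial/\partial u_{i}$ (with the constants $a_{i}$ equal to $1$ except possibly the $j$-th, which is $1-\mu/w_{j}$) acts diagonally on monomials: for $g=u_{1}^{r_{1}}\cdots u_{n}^{r_{n}}$ one has $\chi_{\Gamma_{j}}(g)=\phi_{\Gamma_{j}}(g)\,g$, straight from the definitions of $\chi_{\Gamma_{j}}$ and $\phi_{\Gamma_{j}}$. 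I would then introduce the $(n-1)$-form $\eta_{j}:=g\,\iota_{\chi_{\Gamma_{j}}}\omega_{0}$, where $\iota$ denotes interior product. Since $\iota_{\chi_{\Gamma_{j}}}\omega_{0}$ is a constant-coefficient combination of the forms $\frac{du_{1}}{u_{1}}\wedge\cdots\widehat{\frac{du_{i}}{u_{i}}}\cdots\wedge\frac{du_{n}}{u_{n}}$, each a wedge of closed forms, it is closed, so $d_{u}\eta_{j}=d_{u}g\wedge\iota_{\chi_{\Gamma_{j}}}\omega_{0}$.

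The crucial computational input is the contraction identity on the $n$-dimensional torus: for any $1$-form $\alpha$ one has $\alpha\wedge\omega_{0}=0$ (an $(n+1)$-form in $n$ variables), whence, expanding $\iota_{\chi_{\Gamma_{j}}}(\alpha\wedge\omega_{0})=0$, one gets $\alpha\wedge\iota_{\chi_{\Gamma_{j}}}\omega_{0}=(\iota_{\chi_{\Gamma_{j}}}\alpha)\,\omega_{0}$. Taking $\alpha=d_{u}g$ yields $d_{u}\eta_{j}=\chi_{\Gamma_{j}}(g)\,\omega_{0}=\phi_{\Gamma_{j}}(g)\,g\omega_{0}$, and taking $\alpha=d_{u}F$ yields $d_{u}F\wedge\iota_{\chi_{\Gamma_{j}}}\omega_{0}=\chi_{\Gamma_{j}}(F)\,\omega_{0}$. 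The defining relation of $G$ says $d_{u}\eta_{j}\equiv\tau\,d_{u}F\wedge\eta_{j}$ modulo the submodule $(d_{u}-\tau d_{u}F)\wedge\Omega^{n-1}(U)[x,x^{-1},\tau,\tau^{-1}]$; substituting the two computations gives, in $G$,
\[
\phi_{\Gamma_{j}}(g)\,g\omega_{0}=\tau\,\chi_{\Gamma_{j}}(F)\,g\omega_{0}.
\]

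Finally I would invoke the definition of the connection. Reading $g\omega_{0}$ as a class in degree $\tau^{0}$, the rule $\nabla^{B}_{\partial_{\tau}}(\omega_{i}\tau^{i})=i\omega_{i}\tau^{i-1}-F\omega_{i}\tau^{i}$ gives $\tau\partial_{\tau}(g\omega_{0})=-\tau F g\omega_{0}$. Adding $\phi_{\Gamma_{j}}(g)\,g\omega_{0}$ and using the displayed relation,
\[
(\tau\partial_{\tau}+\phi_{\Gamma_{j}}(g))\,g\omega_{0}=-\tau Fg\omega_{0}+\tau\chi_{\Gamma_{j}}(F)g\omega_{0}=\tau\bigl(\chi_{\Gamma_{j}}(F)-F\bigr)g\omega_{0}=\tau h_{\Gamma_{j}}\,g\omega_{0},
\]
which is the assertion; the special case $g=1$, $j=0$ gives $\tau\partial_{\tau}\omega_{0}=\tau h_{\Gamma_{0}}\omega_{0}$ since $\phi_{\Gamma_{0}}(1)=0$. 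The only genuinely delicate point is the bookkeeping around the contraction identity and the closedness of $\iota_{\chi_{\Gamma_{j}}}\omega_{0}$; everything else is formal manipulation in the Gauss-Manin module. The explicit values $h_{\Gamma_{0}}=-\mu xu_{0}$ and $h_{\Gamma_{j}}=-\tfrac{\mu}{w_{j}}u_{j}$ quoted in the text then follow from a direct evaluation of $\chi_{\Gamma_{j}}(F)$ using $\partial F/\partial u_{i}=1-w_{i}xu_{0}/u_{i}$ together with $\sum_{i=1}^{n}w_{i}=\mu-1$.
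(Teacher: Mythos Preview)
Your argument is correct and is precisely the ``straightforward'' computation the paper alludes to: apply the defining relation of $G$ to $\eta_{j}=g\,\iota_{\chi_{\Gamma_{j}}}\omega_{0}$, use the contraction identity on the torus to identify $d_{u}\eta_{j}$ and $d_{u}F\wedge\eta_{j}$, and combine with $\tau\partial_{\tau}(g\omega_{0})=-\tau Fg\omega_{0}$. There is nothing to add; the paper simply omits these routine details.
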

\begin{proof} This formula follows from the definition of $\partial_{\tau}$ (see equation (\ref{def:ConnGM})).
\end{proof}

\noindent This lemma is the starting point in order to solve the Birkhoff problem for $G_{0}$, as it has been the starting point to solve the one for $G_{0}^{o}$ 
in \cite[section 3]{DoSa2}. Set $\omega_{1}:=xu_{0}\omega_{0}$: then
$$-\frac{1}{\mu}\tau\partial_{\tau}\omega_{0}=\tau\omega_{1}$$
because $\tau\partial_{\tau}\omega_{0}=\tau h_{0}\omega_{0}$. One can iterate the process.
Recall the rational numbers $\alpha_{k}$ and the multi-indices $a(k)=(a(k)_{0},a(k)_{1},\cdots ,a(k)_{n})\in\nit^{n+1}$ defined in section \ref{combinatoire} (notice that $a(k)_{0}=1$ for $k\geq 1$ because $w_{0}=1$). 

\begin{lemma}\label{lemmeBirkhoff}
Let $$\omega_{k}=\frac{x}{w_{1}^{a(k)_{1}}\cdots w_{n}^{a(k)_{n}}}u_{0}u_{1}^{a(k)_{1}}\cdots u_{n}^{a(k)_{n}}\omega_{0}$$
for $k=1,\cdots ,\mu -1$. Then we have, in $G$,
$$-\frac{1}{\mu}(\tau\partial_{\tau}+\alpha_{k})\omega_{k}=\tau\omega_{k+1}$$
for $k=0,\cdots ,\mu -2$ 
and 
$$-\frac{1}{\mu}(\tau\partial_{\tau}+\alpha_{\mu -1})\omega_{\mu -1}=\frac{x}{w_{1}^{w_{1}}\cdots w_{n}^{w_{n}}}\tau\omega_{0}.$$
\end{lemma}
\begin{proof}
This is done as in \cite[section 2 and proof of proposition 3.2]{DoSa2}, using lemma \ref{lemmebase}.
\end{proof}
\noindent We will put $u^{a(k)}=u_{0}u_{1}^{a(k)_{1}}\cdots u_{n}^{a(k)_{n}}$: for instance, $u^{a(1)}=u_{0}$ and $u^{a(\mu )}=1$ because $u_{0}$ is defined by the equation $u_{0}u_{1}^{w_{1}}\cdots u_{n}^{w_{n}}=1$.\\

Let
$$A_{\infty}=\Diag (\alpha_{0},\cdots ,\alpha_{\mu -1}),$$
and, for $x\in \mathcal{M}_{B}$,
$$A_{0}(x)=\left ( \begin{array}{cccccc}
0   & 0   & 0 & \cdots & 0   & \mu x /w^{w}\\
\mu   & 0   & 0 & \cdots & 0   & 0\\
0   & \mu  & 0 & \cdots & 0   & 0\\
..  & ... & . & \cdots & .   & .\\
..  & ... & . & \cdots & .   & .\\
0   & 0   & . & \cdots & \mu   & 0 
\end{array} \right )$$

\noindent where $w^{w}=w_{1}^{w_{1}}\cdots w_{n}^{w_{n}}$.
We will preferably express our results in the variable $\theta :=\tau^{-1}$, also denoted on the $A$-side by $z$.

\begin{theorem}
  \label{basevarphi} The classes $\omega_{0},\cdots
  ,\omega_{\mu -1}$ form a basis $\omega$ of
  $G_{0}$ over $\cit [x,x^{-1},\theta]$. In this basis, 
  the connection $\nabla^{B}$ is
$$\left(-\frac{A_{0}(x)}{\theta} -A_{\infty}+H\right)\frac{dx}{\mu x}+\left(\frac{A_{0}(x)}{\theta} +A_{\infty}\right)\frac{d\theta}{\theta}$$ 
where $H=\Diag (0,1,\cdots ,\mu -1)$.
\end{theorem}
\begin{proof}
One shows that $G_{0}$ is finitely generated as in \cite[p. 7]{DoSa2}, with the help of lemma \ref{lemmeBirkhoff}. 
To show that it is free notice that a section of the kernel of the surjective map
$$(\cit [x,x^{-1},\theta ])^{\mu}\rightarrow G_{0}\rightarrow 0$$
is given by $\mu$ Laurent polynomials which vanish everywhere because, for every $x\in \mathcal{M}_{B}$, the sections defined in lemma \ref{lemmeBirkhoff} yield the basis of the Brieskorn lattice of $F_{x}:=F(. \; ,x)$ given by \cite[proposition 3.2]{DoSa2}. This gives the first assertion. 
Let us show the second one: the assertion about $\nabla^{B}_{\partial_{\theta}}$ is clear, thanks to the definition of the $\omega_{k}$'s. The action of $\nabla^{B}_{\partial_{x}}$ is defined, for $\eta\in G_{0}$, by 
 $$\nabla^{B}_{\partial_{x}}(\eta)=-u_{0}\eta\theta^{-1} +\mathcal{L}_{\partial_{x}}(\eta )$$ 
and we have, for $\eta =u_{0}u_{1}^{r_{1}}\cdots u_{n}^{r_{n}}\omega_{0}$,
$$u_{0}\eta =\frac{1}{\mu x}F\eta -\frac{1}{\mu x}\theta (\sum_{i=1}^{n}r_{i}-w_{i})\eta.$$
 We deduce from this, because $\theta^{2}\nabla^{B}_{\partial_{\theta}}$ is induced by the multiplication by $F$, that 
$$\nabla^{B}_{\partial_{x}}\omega_{k}=-\frac{A_{0}(x)}{\mu x}\theta^{-1} (\omega_{k})+\frac{1}{\mu x}( \mu +\sum_{i=1}^{n}a(k)_{i}-\sum_{i=1}^{n}w_{i}-\alpha_{k})\omega_{k}.$$
Now, one has $\sum_{i=1}^{n}a(k)_{i}=k-1$ (see section \ref{combinatoire}) and $\sum_{i=1}^{n}w_{i}=\mu -1$ so that
$$\mu +\sum_{i=1}^{n}a(k)_{i}-\sum_{i=1}^{n}w_{i}-\alpha_{k}=k-\alpha_{k}.$$
\end{proof}

\begin{remark}\label{start}(1) Put $x=1$. Lemma \ref{lemmeBirkhoff} yields the canonical (in the sense of \cite[Section 5]{DoSa2}) solution 
$\omega^{o}=(\omega_{0}^{o},\cdots ,\omega_{\mu -1}^{o})$
of the Birkhoff problem for the Brieskorn lattice of $f$ given by  
\cite[Proposition 3.2 and Proposition 5.2]{DoSa2}: the logarithmic lattice $E:=\cit [\tau ]<\omega_{0}^{o},\cdots ,\omega_{\mu -1}^{o}>$ is in one -to-one correspondence with M. Saito's canonical opposite filtration to the Hodge filtration on the space of vanishing cycles.\\
(2) The deformation $F$ can be seen as a 'rescaling' of the function $f$ and
it is possible to present the proof of the previous proposition in a slightly different way. However, we prefer to keep our more direct approach 
because it emphasizes the multiplication by $u_{0}$ (see the last part of section \ref{setting}) and gives the general way to proceed if one wants to compute other examples, e.g  $F(u_{1},u_{2},x)=u_{1}+u_{2}+\frac{1}{u_{1}u_{2}^{2}}+\frac{x}{u_{2}}.$\\ 
(3) In order to make the link with the $J$-function and quantum differential operators, notice that
$$[w^{w}\theta^{\mu}\prod_{i=1}^{\mu}(x\nabla_{\partial_{x}}-c_{i})-x]\omega_{0}=0$$ 
(compare with \cite[corollary 1.8]{Coa}).
$\blacklozenge$
\end{remark}

\begin{remark} (Various generalizations)\\
(1) The case $w_{0}\neq 1$ can be handled using the presentation of the Gauss-Manin system considered in \cite{dGMS}. This is longer but yields the same result: one has to 
replace $w_{1}^{a(k)_{1}}\cdots w_{n}^{a(k)_{n}}$ by $w_{0}^{a(k)_{0}}w_{1}^{a(k)_{1}}\cdots w_{n}^{a(k)_{n}}$ in the definition of the $\omega_{k}$'s and 
$w_{1}^{w_{1}}\cdots w_{n}^{w_{n}}$ by $w_{0}^{w_{0}}w_{1}^{w_{1}}\cdots w_{n}^{w_{n}}$ in the definition of $A_{0}(x)$.\\
(2) One could start more generally with the function
$$f(u_{1},\cdots ,u_{n})=b_{1}u_{1}+\cdots +b_{n}u_{n}+\frac{1}{u_{1}^{w_{1}}\cdots u_{n}^{w_{n}}}$$ 
where $b_{1},\cdots ,b_{n}$ are complex numbers such that $b_{1}\cdots b_{n}\neq 0$ and would obtain analoguous results. 
The Laurent polynomial considered in \cite{DoSa2} is obtained putting $b_{i}=w_{i}$ for all $i$ in $f$. But, if we keep in mind mirror symmetry, only the case $b_{i}=1$ will be really relevant. $\blacklozenge$
\end{remark}

The basis $\omega$ has another remarkable property: it yields a canonical extension of $G$ to $\cit^{*}\times\cit$.  
To see this, put $R:=\mu^{-1}(H-A_{\infty})$. It follows from section \ref{combinatoire} that
$$R=\Diag (c_{0} ,\cdots ,c_{\mu -1})$$
and from theorem \ref{basevarphi} that
the matrix of $x\nabla^{B}_{\partial_{x}}$ in the basis $\omega$ is given by
$$-\mu^{-1}\frac{A_{0}(x)}{\theta} +R.$$
Let ${\cal L}$ be the  $\cit [x, \theta ,\theta^{-1}]$-submodule of $G$ generated by $\omega$:
 $x\nabla^{B}_{\partial_{x}}$ induces a map on ${\cal
  L}/x{\cal L}$ whose eigenvalues are contained in $[0,1[$,
because $A_{0}(0)$ is a Jordan matrix and because $c_{k}\in [0,1[$ for
$k=0,\cdots ,\mu -1$. Thus we get

\begin{corollary}\label{coro:Deligne,extension}
The lattice ${\cal L}$ is Deligne's canonical extension of the Gauss-Manin system $G$ to $\cit^{*}\times \cit$  such that the eigenvalues of the residue of $\nabla^{B}_{\partial_{x}}$ are contained in $[0,1[$.\qed
\end{corollary}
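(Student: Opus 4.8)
The plan is to verify the defining properties of Deligne's canonical extension directly, using the explicit matrix of $\nabla^{B}_{\partial_{x}}$ in the basis $\omega^{\varphi}$ already computed in Theorem \ref{basevarphi}. Recall that Deligne's canonical extension of a meromorphic connection with regular singularities along $x=0$ is characterized by two properties: it is a locally free lattice extending $G$ across $x=0$, and the residue of the connection at $x=0$ has eigenvalues in a fixed fundamental domain for $\zit$, here normalized to $[0,1[$. So the proof splits into checking these two points for ${\cal L}^{\varphi}$.

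First I would recall from the discussion preceding the statement that the matrix of $x\nabla^{B}_{\partial_{x}}$ in the basis $\omega^{\varphi}$ is
$$-\mu^{-1}\frac{A_{0}^{\varphi}(x)}{\theta}+R^{\varphi},$$
and that ${\cal L}^{\varphi}$ is by definition the free $\cit[x,\theta,\theta^{-1}]$-module generated by $\omega_{0}^{\varphi},\cdots,\omega_{\mu-1}^{\varphi}$. Freeness over $\cit[x,\theta,\theta^{-1}]$ gives at once that ${\cal L}^{\varphi}$ is a locally free lattice extending the Gauss-Manin system to $\cit^{\ast}_{\theta}\times\cit_{x}$, so the first characterizing property holds by construction. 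The only remaining point is the condition on the residue.

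Next I would compute the residue of $\nabla^{B}_{\partial_{x}}$ at $x=0$, i.e.\ the action induced by $x\nabla^{B}_{\partial_{x}}$ on the quotient ${\cal L}^{\varphi}/x{\cal L}^{\varphi}$. Setting $x=0$ in the matrix above kills the single entry $\mu x/w^{w}$ of $A_{0}^{\varphi}(x)$, so $A_{0}^{\varphi}(0)$ becomes the strictly lower-triangular nilpotent Jordan-type matrix; being nilpotent, its only eigenvalue is $0$, and the term $-\mu^{-1}A_{0}^{\varphi}(0)/\theta$ contributes nothing to the eigenvalues of the residue. Hence the eigenvalues of the residue coincide with those of $R^{\varphi}=\Diag(c_{0},\cdots,c_{\mu-1})$, which are exactly the numbers $c_{0},\cdots,c_{\mu-1}$. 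By Lemma \ref{lessk} and the normalization $0=f_{1}<\cdots<f_{k}<1$, every $c_{k}$ lies in $[0,1[$, so all residue eigenvalues lie in $[0,1[$ as required.

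The step I expect to require the most care is the claim that the eigenvalues of the residue are governed by $R^{\varphi}$ alone and are genuinely unaffected by the off-diagonal term at $x=0$. One must be a little careful here: the residue is the \emph{endomorphism} induced on ${\cal L}^{\varphi}/x{\cal L}^{\varphi}$, and the term $-\mu^{-1}A_{0}^{\varphi}(0)/\theta$ does survive in that quotient (it is $\theta$-dependent but not $x$-dependent), so the residue is really $-\mu^{-1}A_{0}^{\varphi}(0)/\theta+R^{\varphi}$ rather than just $R^{\varphi}$. The point is that this is a lower-triangular matrix whose diagonal is precisely $R^{\varphi}$: since $A_{0}^{\varphi}(0)$ is strictly lower triangular, adding $-\mu^{-1}A_{0}^{\varphi}(0)/\theta$ changes nothing below the diagonal's influence on the spectrum, and the eigenvalues of a triangular matrix are its diagonal entries $c_{0},\cdots,c_{\mu-1}$. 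This spectral computation is the heart of the argument, and once it is in place the corollary follows immediately from the characterization of Deligne's canonical extension together with $c_{k}\in[0,1[$.
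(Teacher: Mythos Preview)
Your proposal is correct and follows exactly the argument the paper gives (which is contained in the sentence immediately preceding the corollary, hence the \qed): $A_{0}^{\varphi}(0)$ is nilpotent (strictly lower triangular), so the residue endomorphism on ${\cal L}^{\varphi}/x{\cal L}^{\varphi}$ is lower triangular with diagonal $R^{\varphi}=\Diag(c_{0},\ldots,c_{\mu-1})$, and $c_{k}\in[0,1[$ for all $k$. Your added care in distinguishing the full residue $-\mu^{-1}A_{0}^{\varphi}(0)/\theta+R^{\varphi}$ from its diagonal part, and explaining why triangularity still gives the spectrum $\{c_{k}\}$, is a welcome clarification of what the paper leaves implicit.
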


 Theorem \ref{basevarphi} says that the basis $\omega$ gives an extension of $G_{0}$ as a trivial bundle $H^{B}$ on $\ppit^{1}\times \mathcal{M}_{B}$ (the module of its global sections is generated by $\omega_{0},\cdots ,\omega_{\mu -1}$) equipped with a connection $\nabla^{B}$ with logarithmic pole at $\tau :=\theta^{-1}=0$ and pole of order less or equal to two at $\theta =0$ (see for instance \cite[section 2.1]{Sab2}).
These are the first ingredients of our quantum differential system.

\subsubsection{Flat and orbifold bases}
\label{flatbasis}

Let $\Delta$ be an open disc in $\cit^{*}$ and, for $x\in\Delta$,
$\omega^{\plat}:=\omega x^{-R}$. $\omega^{\plat}$ is a local
basis of $G_{0}^{an}:={\cal O}_{\Delta}\otimes G$ and we will call it {\em a flat basis}, flat with
respect to the restriction $\bigtriangledown$ of $\nabla^{B}$ at $\{\theta
=\infty\}\times\cit^{*}$. The connection $\nabla^{B}$ in the basis
$\omega^{\plat}$ is
$$-\frac{A_{0}^{\plat}(x)}{\theta}\frac{dx}{\mu x}+\left(\frac{A_{0}^{\plat}(x)}{\theta} +A_{\infty}\right)\frac{d\theta}{\theta}$$
where
$$A_{0}^{\plat}(x)=\mu \left ( \begin{array}{cccccc}
0   & 0   & 0 & \cdots & 0   &  x^{1-c_{\mu -1}}/w^{w}\\
x^{c_{1}-c_{0}}   & 0   & 0 & \cdots & 0   & 0\\
0   &  x^{c_{2}-c_{1}} & 0 & \cdots & 0   & 0\\
..  & ... & . & \cdots & .   & .\\
..  & ... & . & \cdots & .   & .\\
0   & 0   & . & \cdots &  x^{c_{\mu -1}-c_{\mu -2}}  & 0 
\end{array} \right ),$$
the $c_{i}$'s being defined in section \ref{combinatoire}.

For $i\in\{0, \ldots ,\mu-1\}$, we denote
\begin{equation}\label{eq:chg,base,orb,flat,varphi}
  \omega^{\orb}_{i}:=s_{i}^{-1}\omega_{i}^{\plat}=x^{-c_{i}}s_{i}^{-1}\omega_{i}
\end{equation}
where the $s_{i}$ are defined in \eqref{eq:17}.
The  connection $\nabla^{B}$ in the basis $\omega^{\orb}$ is
$$-\frac{A_{0}^{\orb}(x)}{\theta}\frac{dx}{\mu x}+\left(\frac{A_{0}^{\orb}(x)}{\theta} +A_{\infty}\right)\frac{d\theta}{\theta}$$
where
$$A_{0}^{\orb}(x)=\mu \left ( \begin{array}{cccccc}
0   & 0   & 0 & \cdots & 0   &  a_{\mu}x^{1-c_{\mu -1}}\\
a_{1}x^{c_{1}-c_{0}}   & 0   & 0 & \cdots & 0   & 0\\
0   & a_{2}x^{c_{2}-c_{1}} & 0 & \cdots & 0   & 0\\
..  & ... & . & \cdots & .   & .\\
..  & ... & . & \cdots & .   & .\\
0   & 0   & . & \cdots &  a_{\mu -1}x^{c_{\mu -1}-c_{\mu -2}}  & 0 
\end{array} \right ),$$
the $a_{i}$'s being defined in \eqref{eq:20}.

\subsubsection{The pairing }
\label{dualite}

We define in this section a nondegenerate, symmetric and $\nabla^{B}$-flat bilinear form on $G_{0}$. 
The lattice $G_{0}^{o}$ is equipped with a nondegenerate bilinear form          
$$S^{o}:G_{0}^{o}\times G_{0}^{o}\rightarrow\cit[\theta ]\theta^{n},$$
$\nabla^{B,o}$-flat and satisfying , for $p(\theta )\in\cit[\theta ]$,
$$p(\theta)S^{o}(\cdot \, ,\, \cdot )=S^{o}(p(\theta)\cdot \, ,\, \cdot )=S^{o}(\cdot \, ,p(-\theta)\, \cdot ).$$ 
More precisely, in the basis $\omega^{o}=(\omega^{o}_{0},\cdots
,\omega^{o}_{\mu -1})$ of $G_{0}^{o}$ considered in remark \ref{start}
(1), one has
$$S^{o}(\omega^{o}_{k},\omega^{o}_{\ell})=\left\{ \begin{array}{ll}
S^{o}(\omega^{o}_{0},\omega^{o}_{n})\in\cit^{*} \theta^{n} &  \mbox{if $0\leq k\leq n$ and $k+\ell =n$,}\\
{w^{-w}}S^{o}(\omega^{o}_{0},\omega^{o}_{n}) & \mbox{if  $n+1\leq k\leq \mu -1$ and $k+\ell =\mu +n$,}\\
0 & \mbox{otherwise}
\end{array}
\right .$$
\noindent where $w^{w}=w_{1}^{w_{1}}\cdots w_{n}^{w_{n}}$ as above.
This is shown as in \cite[Sect. 4]{DoSa2}. From now on, we will choose
the normalization
$S^{o}(\omega^{o}_{0},\omega^{o}_{n})=1/m_{1}\theta^{n}$ (recall that
$m_{1}=w_{1}\cdots w_{n}$).

We define, in the basis $\omega$ given by
theorem \ref{basevarphi},
\begin{align}\label{eq:paring,omega,phir}
  S^{B}(\omega_{k},\omega_{\ell})=\left\{ 
\begin{array}{ll}
{\theta^{n}m_{1}^{-1}} &  \mbox{if $0\leq k\leq n$ and $k+\ell =n$,} \\
{\theta^{n}m_{1}^{-1}xw^{-w}} & \mbox{if  $n+1\leq k\leq \mu -1$ and $k+\ell =\mu +n$,}\\
0 & \mbox{otherwise}
\end{array}
\right .
\end{align}
This gives                                                                       
$$S^{B}:G_{0}\times G_{0}\rightarrow\cit[x,x^{-1},\theta ]\theta^{n}$$
by linearity, using the rules        
$$a(x, \theta )S(\cdot \, ,\, \cdot )=S(a(x,\theta )\cdot \, ,\, \cdot )=S(\cdot \, ,a(x,-\theta )\, \cdot )$$ 
for $a(x,\theta )\in\cit [x, \theta ]$. Flatness is defined by equations (\ref{eq:13}), (\ref{eq:14}) (replacing $z$ by $\theta$ and $\partial_{k}$ by $\partial_{x}$).
The following lemma justifies the definition of $S^{B}$:

\begin{lemma}\label{symetrie} 
 The bilinear form $S^{B}$ is $\nabla^{B}$-flat.
\end{lemma}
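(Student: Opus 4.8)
The plan is to verify the two flatness equations \eqref{eq:13} and \eqref{eq:14} (with $z$ replaced by $\theta$ and $\partial_k$ by $\partial_x$) directly in the canonical basis $\omega^{\varphi}$, using the explicit matrix of the connection given by Theorem \ref{basevarphi} together with the explicit values of $S^{B}$ defined in \eqref{eq:paring,omega,phir}. Recall that Theorem \ref{basevarphi} gives
\begin{equation*}
\theta^{2}\nabla^{B}_{\partial_{\theta}}\,\omega^{\varphi}=A_{0}^{\varphi}(x)\,\omega^{\varphi},\qquad
x\nabla^{B}_{\partial_{x}}\,\omega^{\varphi}=\Bigl(-\mu^{-1}\tfrac{A_{0}^{\varphi}(x)}{\theta}+R^{\varphi}\Bigr)\omega^{\varphi},
\end{equation*}
and that by definition of $S^{B}$ one has $\theta\partial_{\theta}S^{B}(\omega^{\varphi}_{k},\omega^{\varphi}_{\ell})=nS^{B}(\omega^{\varphi}_{k},\omega^{\varphi}_{\ell})$ while $x\partial_{x}$ acts as multiplication by $1$ on the nonzero off-diagonal block (since $S^{B}(\omega^{\varphi}_{k},\omega^{\varphi}_{\ell})$ is proportional to $x$ when $k+\ell=\mu+n$ and is constant when $k+\ell=n$).

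First I would treat the $\theta$-direction, equation \eqref{eq:13}. Writing $A_{\infty}=\Diag(\alpha_0,\dots,\alpha_{\mu-1})$ and using that $\alpha_k+\alpha_{\ell}=n$ whenever $S^{B}(\omega^{\varphi}_k,\omega^{\varphi}_{\ell})\neq 0$ (this is precisely Corollary \ref{lesalphak}, since the nonzero entries occur at $k+\ell=n$ and $k+\ell=\mu+n$), the contribution of $A_{\infty}$ to $S^{B}(\nabla^{B}_{\theta\partial_\theta}\omega^{\varphi}_k,\omega^{\varphi}_\ell)+S^{B}(\omega^{\varphi}_k,\nabla^{B}_{\theta\partial_\theta}\omega^{\varphi}_\ell)$ sums to $n\,S^{B}(\omega^{\varphi}_k,\omega^{\varphi}_\ell)$, matching the left-hand side. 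It then remains to check that the $\theta^{-1}A_{0}^{\varphi}(x)$ part of $\nabla^{B}_{\theta\partial_\theta}$ contributes zero; because $A_{0}^{\varphi}$ shifts indices by one (cyclically, with the $x/w^{w}$ corner), I would pair off the two terms and show they cancel, which amounts to the ``adjointness'' relation $A_{0}^{\varphi}$ is self-adjoint with respect to $S^{B}$ up to the sign produced by the rule $S(\cdot,a(-\theta)\cdot)$. Concretely this reduces to matching $S^{B}(\omega^{\varphi}_{k\pm1},\omega^{\varphi}_\ell)$ against $S^{B}(\omega^{\varphi}_k,\omega^{\varphi}_{\ell\pm1})$ across the two nonzero diagonals, using that the corner entry carries the factor $x/w^{w}$ exactly compensating the $x$ appearing in the second line of \eqref{eq:paring,omega,phir}.

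Next I would treat the $x$-direction, equation \eqref{eq:14}. The left-hand side $x\partial_x S^{B}(\omega^{\varphi}_k,\omega^{\varphi}_\ell)$ is $0$ on the block $k+\ell=n$ and equals $S^{B}(\omega^{\varphi}_k,\omega^{\varphi}_\ell)$ on the block $k+\ell=\mu+n$. On the right-hand side, the diagonal part $R^{\varphi}=\Diag(c_0,\dots,c_{\mu-1})$ contributes $(c_k+c_\ell)S^{B}(\omega^{\varphi}_k,\omega^{\varphi}_\ell)$; by Lemma \ref{lessk} one has $c_k+c_\ell=0$ when $k+\ell=n$ and $c_k+c_\ell=1$ when $k+\ell=\mu+n$, which already reproduces the left-hand side. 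Hence the remaining task is to show that the $-\mu^{-1}\theta^{-1}A_{0}^{\varphi}(x)$ part contributes zero, which is the \emph{same} cancellation as in the $\theta$-direction (the off-diagonal operator is identical up to the scalar $-\mu^{-1}$), so the two computations share their only nontrivial step.

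The main obstacle is precisely this self-adjointness cancellation for $A_{0}^{\varphi}(x)$: one must check that the index shifts produced by $A_{0}^{\varphi}$ move a pair on the diagonal $k+\ell=n$ either off the support of $S^{B}$ or onto the diagonal $k+\ell=\mu+n$ in a way that the two paired terms cancel with opposite signs, and symmetrically starting from $k+\ell=\mu+n$; the wrap-around corner entry $\mu x/w^{w}$ is where one must be careful, since it is the place the factor $x$ in \eqref{eq:paring,omega,phir} and the constant $w^{w}$ conspire to make the cancellation exact. Once this single verification is carried out, both \eqref{eq:13} and \eqref{eq:14} follow and the lemma is proved. As an alternative that avoids case-checking, I would remark that flatness of $S^{B}$ in the basis $\omega^{\varphi}$ can also be deduced by transporting the already-established flatness of $S^{o}$ on $G_{0}^{o}$ (quoted from \cite[Sect.~4]{DoSa2}) through the construction of the deformation, but the direct matrix computation above is the cleaner route given the explicit data in Theorem \ref{basevarphi}.
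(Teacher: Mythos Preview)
Your approach is essentially the same as the paper's: both work in the canonical basis $\omega^{\varphi}$, reduce \eqref{eq:13} to the two facts $(A_{0}^{\varphi}(x))^{*}=A_{0}^{\varphi}(x)$ and $A_{\infty}+A_{\infty}^{*}=n\id$ (the latter via Corollary~\ref{lesalphak}), and reduce \eqref{eq:14} to the identity $x\partial_{x}S^{B}(\omega^{\varphi}_{k},\omega^{\varphi}_{\ell})=(c_{k}+c_{\ell})S^{B}(\omega^{\varphi}_{k},\omega^{\varphi}_{\ell})$ using Lemma~\ref{lessk}, the $A_{0}^{\varphi}$-contribution cancelling by the same self-adjointness. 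One small slip: your displayed formula $\theta^{2}\nabla^{B}_{\partial_{\theta}}\omega^{\varphi}=A_{0}^{\varphi}(x)\omega^{\varphi}$ drops the $\theta A_{\infty}$ term from Theorem~\ref{basevarphi}, but since you then correctly treat the $A_{\infty}$ and $A_{0}^{\varphi}/\theta$ contributions separately this does not affect the argument.
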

\begin{proof} We work in the basis $\omega$: it follows first from the definition of $A_{0}(x)$ and $S^{B}$ that 
one has $(A_{0}(x))^{*}=A_{0}(x)$  where $^{*}$ denotes the adjoint with respect to $S^{B}$. The symmetry property of the numbers $\alpha_{k}$ (see corollary \ref{lesalphak}) shows also that $A_{\infty}+A_{\infty}^{*}=nI$. This gives equation (\ref{eq:13}). Now, equation (\ref{eq:14}) reads 
$$x\partial_{x}S^{B}(\omega_{i}, \omega_{j})=S^{B}(R(\omega_{i}), \omega_{j})+S^{B}(\omega_{i}, R(\omega_{j}) )$$
but this follows once again from lemma \ref{lesalphak}. 
\end{proof}

\begin{corollary}\label{orbipairing}
We have 
$$S^{B}(\omega^{\orb}_{k},\omega^{\orb}_{\ell})=
\left\{ \begin{array}{ll}
{m_{1}^{-1}}\theta^{n} & \mbox{if $0\leq k\leq n$ and $k+\ell =n$,}\\
{m_{i+1}^{-1}}\theta^{n} & \mbox{if $d_{1}+\cdots +d_{i}\leq k <d_{1}+\cdots +d_{i+1}$ and $k+\ell =\mu+n$,}\\
0 & \mbox{otherwise}
\end{array}
\right .$$
\end{corollary}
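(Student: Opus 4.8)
The plan is to derive the corollary directly from the expression \eqref{eq:paring,omega,phir} for $S^{B}$ in the canonical basis, exploiting that the change to the orbifold basis \eqref{eq:chg,base,orb,flat,varphi} is diagonal: $\omega^{\orb}_{k}=x^{-c_{k}}s_{k}^{-1}\omega_{k}^{\varphi}$. Since each scalar $x^{-c_{k}}s_{k}^{-1}$ depends on $x$ only and not on $\theta$, the bilinearity rules for $S^{B}$ let me pull these factors out on both sides, giving
$$S^{B}(\omega^{\orb}_{k},\omega^{\orb}_{\ell})=x^{-c_{k}-c_{\ell}}(s_{k}s_{\ell})^{-1}S^{B}(\omega_{k}^{\varphi},\omega_{\ell}^{\varphi}).$$
The vanishing pattern is thus preserved, so the ``otherwise'' case is immediate and I only need to treat the two nonzero cases of \eqref{eq:paring,omega,phir}.

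For the first case ($0\le k\le n$, $k+\ell=n$), Lemma \ref{lessk} gives $c_{0}=\cdots=c_{n}=0$, hence $c_{k}=c_{\ell}=0$; and from the definition $s_{i}=\prod_{j}w_{j}^{-\lceil c_{i}w_{j}\rceil}$ recorded in \eqref{eq:17} I get $s_{k}=s_{\ell}=1$. So both correcting factors are trivial and $S^{B}(\omega^{\orb}_{k},\omega^{\orb}_{\ell})=S^{B}(\omega_{k}^{\varphi},\omega_{\ell}^{\varphi})=m_{1}^{-1}\theta^{n}$, as claimed.

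The substantive case is the second one ($n+1\le k\le\mu-1$, $k+\ell=\mu+n$). Here Lemma \ref{lessk} yields $c_{k}+c_{\ell}=1$, so $x^{-c_{k}-c_{\ell}}=x^{-1}$, which will cancel the factor $x$ appearing in \eqref{eq:paring,omega,phir}; this is already a good sign, since the corollary has no $x$-dependence. The key computation is $s_{k}s_{\ell}$: writing $c_{\ell}=1-c_{k}$ and observing that $\lceil c_{k}w_{j}\rceil+\lceil(1-c_{k})w_{j}\rceil$ equals $w_{j}$ when $w_{j}c_{k}\in\zit$ (that is, $j\in S_{c_{k}}$) and $w_{j}+1$ otherwise, I recover the identity $s_{k}s_{\ell}=w^{-w}\prod_{j\notin S_{c_{k}}}w_{j}^{-1}$ already used in the proof of Proposition \ref{prop:pairingglobal}. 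Substituting gives
$$S^{B}(\omega^{\orb}_{k},\omega^{\orb}_{\ell})=x^{-1}w^{w}\prod_{j\notin S_{c_{k}}}w_{j}\cdot w^{-w}\frac{x}{m_{1}}\theta^{n}=\frac{1}{m_{1}}\prod_{j\notin S_{c_{k}}}w_{j}\,\theta^{n}.$$

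It then remains to identify this with $m_{i+1}^{-1}\theta^{n}$. By the block structure of the sequence $(c_{k})$ recalled in section \ref{combinatoire}, the hypothesis $d_{1}+\cdots+d_{i}\le k<d_{1}+\cdots+d_{i+1}$ means precisely $c_{k}=f_{i+1}$, whence $S_{c_{k}}=S_{f_{i+1}}$. Since $m_{1}=\prod_{j=0}^{n}w_{j}$ and $m_{i+1}=\prod_{j\in S_{f_{i+1}}}w_{j}$ by \eqref{eq:mi,Sf}, the product $m_{1}^{-1}\prod_{j\notin S_{f_{i+1}}}w_{j}$ telescopes to $m_{i+1}^{-1}$, completing the computation. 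The only delicate point is the ceiling-function identity for $s_{k}s_{\ell}$, and as it is already established in the excerpt I expect no genuine obstacle.
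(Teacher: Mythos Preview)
Your proof is correct and follows essentially the same route as the paper's: both reduce to the formula \eqref{eq:paring,omega,phir} via the diagonal change of basis \eqref{eq:chg,base,orb,flat,varphi} and then finish with a combinatorial identity. The only cosmetic differences are that the paper passes through $\omega^{\plat}$ first (invoking Lemma \ref{symetrie} to explain the $x$-independence) and phrases the final identity as $m_{i}=m_{j}$ for $i+j=k+2$ together with $m_{1}\cdots m_{k}=w^{w}$, whereas you go directly from $\omega^{\varphi}$ to $\omega^{\orb}$ and reuse the ceiling-function computation of $s_{k}s_{\ell}$ from Proposition \ref{prop:pairingglobal}; these are equivalent bookkeepings of the same product $\prod_{j\notin S_{c_{k}}}w_{j}$.
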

\begin{proof} By lemma \ref{symetrie}, $S^{B}$ is constant in the basis $\omega^{\plat}$ thus in the basis $\omega^{\orb}$ and the result follows from the definitions, using the fact that $m_{i}=m_{j}$ if $i+j=k+2$ and $m_{1}\cdots m_{k}=w^{w}$. 
\end{proof}

\begin{remark} \label{gsurJacobi}
(1) The coefficient of $\theta^{n}$ in $S^{B}(\varepsilon ,\eta )$, $\varepsilon ,\eta \in G_{0}$, depends only on the classes of $\varepsilon$ and  $\eta$ in $G_{0}/\theta G_{0}$. We will denote it by $g([\varepsilon ],[\eta ])$. This defines a nondegenerate bilinear form $g$ on $G_{0}/\theta G_{0}$, see \cite[p. 211]{Sab1}.\\
(2) The bilinear form $S^{B}$ defines a bilinear form (also denoted by $S^{B}$) on the trivial bundle $H^{B}$ (see for instance \cite[section 1.4]{Sab2}).  $\blacklozenge$
\end{remark}

\subsection{R\'esum\'e (the $B$-model quantum differential system)}
\label{Saitostructure}

We have constructed a  trivial bundle $H^{B}$ (section \ref{canonicalbasis}), equipped with a flat meromorphic connection $\nabla^{B}$, and a $\nabla^{B}$-flat pairing $S^{B}$ (section \ref{dualite}). Summarizing, we get

\begin{theorem}\label{BcanonicalMSstructure}   The tuple 
$$\mathcal{S}^{B}_{w}=\left(\mathcal{M}_{B}, H^{B}, \nabla^{B}, S^{B},
  n\right)$$
is a quantum differential system.
\end{theorem}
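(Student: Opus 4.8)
The plan is to verify, one by one, the three defining conditions of a Saito structure of weight $n$ (Definition \ref{defi:Saito,structure}) for the tuple $\mathcal{S}^{B}_{w}=(\mathcal{M}_{B}, H^{B}, \nabla^{B}, S^{B}, n)$; each of these has in fact already been prepared by the constructions of Section \ref{Bmodel}. The triviality of the bundle $H^{B}$ over $\ppit^{1}\times\mathcal{M}_{B}$ is exactly the content of Theorem \ref{basevarphi}: the classes $\omega_{0}^{\varphi},\cdots ,\omega_{\mu -1}^{\varphi}$ form a basis of $G_{0}$ over $\cit[x,x^{-1},\theta]$, and together with their behaviour at $\tau:=\theta^{-1}=0$ they generate the module of global sections of $H^{B}$, as recorded in Definition \ref{def:canonicalbundle}.

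First I would treat the connection $\nabla^{B}$. Its flatness is automatic, since $\nabla^{B}$ is (the Fourier--Laplace transform of) the Gauss--Manin connection of $F$, which is integrable by construction. For the pole orders I would simply read off the connection matrix displayed in Theorem \ref{basevarphi}. The coefficient of $d\theta/\theta$ is $A_{0}^{\varphi}(x)/\theta + A_{\infty}$, so that $\theta^{2}\nabla^{B}_{\partial_{\theta}}$ equals $A_{0}^{\varphi}(x)+\theta A_{\infty}$ and extends holomorphically across $\theta =0$: this is the pole of order $1$ along $\{0\}\times\mathcal{M}_{B}$. Passing to $\tau=\theta^{-1}$ and using $d\theta/\theta=-d\tau/\tau$, the operator $\nabla^{B}_{\tau\partial_{\tau}}$ has residue $-A_{\infty}$ at $\tau=0$, which is the logarithmic pole along $\{\infty\}\times\mathcal{M}_{B}$; since $x$ runs in $\mathcal{M}_{B}=\cit^{*}$, the factor $dx/x$ introduces no further pole and the connection is holomorphic away from $\{0,\infty\}\times\mathcal{M}_{B}$. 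This is precisely the pole structure demanded by Definition \ref{defi:Saito,structure}.

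Next I would treat the pairing $S^{B}$. Its $\nabla^{B}$-flatness is Lemma \ref{symetrie}. Nondegeneracy, the symmetry $S^{B}(a,b)(\theta)=(-1)^{n}S^{B}(b,a)(-\theta)$, and the fact that $S^{B}$ takes values in $z^{n}\mathcal{O}_{\ppit^{1}\times\mathcal{M}_{B}}$ with the required $i^{*}$-behaviour can all be read off the explicit formula \eqref{eq:paring,omega,phir}. In the basis $\omega^{\varphi}$ the pairing matrix has exactly one nonzero entry in each row and in each column, pairing $\omega_{k}^{\varphi}$ with $\omega_{n-k}^{\varphi}$ for $0\leq k\leq n$ and with $\omega_{\mu +n-k}^{\varphi}$ for $n+1\leq k\leq\mu -1$; since every such entry is a nonvanishing scalar multiple of $\theta^{n}$ (the coefficient $x/(m_{1}w^{w})$ never vanishes because $x\in\cit^{*}$), the matrix is invertible, which yields nondegeneracy. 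The common factor $\theta^{n}=z^{n}$ furnishes the target $z^{n}\mathcal{O}$, and the involutions $k\leftrightarrow n-k$ and $k\leftrightarrow\mu +n-k$ give the $(-1)^{n}$-symmetry.

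Collecting these three verifications yields the theorem. I do not expect any genuine obstacle, as all the substantive work is already contained in Theorem \ref{basevarphi}, Lemma \ref{symetrie} and formula \eqref{eq:paring,omega,phir}. The only point deserving care is the book-keeping of the two charts of $\ppit^{1}$: one must read the matrix of Theorem \ref{basevarphi} in the coordinate $\theta=z$ near $\{0\}\times\mathcal{M}_{B}$ and in the coordinate $\tau=\theta^{-1}$ near $\{\infty\}\times\mathcal{M}_{B}$, so as to confirm simultaneously that the pole at $z=0$ has order one (Poincar\'e rank $\leq 1$) while the pole at $z=\infty$ is logarithmic.
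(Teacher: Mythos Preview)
Your proposal is correct and follows the same approach as the paper: the theorem is stated in the paper as a \emph{r\'esum\'e} of the preceding constructions, with no separate proof, referring back to Definition \ref{def:canonicalbundle} for the trivial bundle and to Section \ref{dualite} for the pairing. Your write-up simply spells out explicitly the verification of each axiom of Definition \ref{defi:Saito,structure} that the paper leaves implicit.
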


\begin{definition} We will say that $\mathcal{S}^{B}_{w}$ is the {\em
  small $B$-model quantum differential system}. 
\end{definition}

\begin{remark}\label{BmodIrvsDM}
Iritani's $B$-model $D$-module (see \cite[definition 3.16]{Ir}) is different (compare with remark \ref{AmodIrvsDM}), as he deals only with bundles on $\cit\times\mathcal{M}_{B}$: in particular, he doesn't consider the Birkhoff problem at all.$\blacklozenge$
\end{remark}

\section{The mirror partner of the small quantum orbifold cohomology
  of $\ppit (w)$}
\label{mirrorsmall}

\subsection{Correspondence}\label{sec:correspondance}

Let us first summarize the results obtained. 
On both sides we have a trivial bundle over a base isomorphic to $\ppit^{1}\times \cit^{*}$. 
The free $\cit [q,q^{-1}]$-module $H_{A}$ of global sections of
$\widetilde{H}^{A,\smal}$ is generated by $(P^{\bullet})^{j}$ for $j=0,\cdots ,\mu -1$ whereas the free $\cit [x,x^{-1}]$-module $H_{B}$ of 
global sections of $H^{B}$ is generated by $(\omega_{i})$.
The following theorem gives an explicit isomorphism between the small $A$-model quantum differential system and the 
small $B$-model quantum differential system and by the way a precise form of the mirror theorem for weighted projective spaces.

\begin{theorem} 
\label{quantum}
The map 
$$\gamma : H_{A} \rightarrow H_{B}$$
defined by 
\begin{align}\label{eq:19}
  \gamma (P^{\bullet j})=\omega_{j}.
\end{align}
gives an isomorphism between $H_{A}$ and $H_{B}$, after identifying $\ppit^{1}\times\mathcal{M}_{A}$ and $\ppit^{1}\times\mathcal{M}_{B}$ via 
the map $(z,q)\mapsto (\theta ,x)$. It yields an isomorphism between the small $A$-model quantum differential system 
$$(\mathcal{M}_{A}, \widetilde{H}^{A,\smal}, \widetilde{\nabla}^{A,\smal}, \widetilde{S}^{A,\smal},n)$$ 
and the small $B$-model quantum differential system  $$(\mathcal{M}_{B}, H^{B}, \nabla^{B}, S^{B},n).$$ 
\end{theorem}

\begin{remark} \label{rem:correspondence} Identify
  $\mathcal{M}=\mathcal{M}_{A}=\mathcal{M}_{B}$.  Proposition 4.8 of
  Iritani \cite{Ir} implies that our two $D$-modules are isomorphic
  over $\cit\times\mathcal{M}$.  So our result above is about the
  compatibility of the extensions over $\ppit^{1}\times\mathcal{M}$.
  Namely, the natural extension on the A-side (recall that the small
  $A$-model D-module is naturally defined over $\ppit^{1}\times
  \mathcal{M}$) corresponds to the solution of the Birkhoff problem
  given in Theorem \ref{basevarphi}.  More precisely, the isomorphism
  over $\cit \times\mathcal{M}$ of Proposition 4.8 of Iritani \cite{Ir}
  for the A-side (for the B-side, one has to take $\nabla^{B}$ and
  replace the unit $\phi_{0}$ by $\omega_{0}$) is the following :
  \begin{align*}
    \cit[q^{\pm},z]\langle zq\partial_{q}\rangle/ \langle T_{w}
    \rangle
    &\stackrel{\sim}{\longrightarrow}  \left(\widetilde{H}^{A,\smal}, \widetilde{\nabla}^{A,\smal}\right)\\
    P(q,z,zq\partial_{q})&\longmapsto
    P(q,z,\widetilde{\nabla}^{A,\smal}_{zq\partial_{q}}) \phi_{0}
\end{align*}
where $T_{w}=\prod_{i=1}^{\mu}(zq\partial_{q}-zc_{i}) -qw^{-w}$ (see
Corollary 1.8 in \cite{Coa}).  The natural choice of basis in this framework is thus
$((\widetilde{\nabla}^{A,\smal}_{zq\partial_{q}})^{i}\phi_{0})_{i=0, \ldots ,\mu-1}$. It gives an extension on
$\ppit^{1}\times \mathcal{M}$ but this extension
will not give a quantum differential system because the connection does not have
{\em a priori} a logarithmic pole along $\{z=\infty \}\times M$ (see formula
\eqref{eq:log+rk1,connection}). Indeed the matrix of the connection is
the companion matrix associated to
\begin{displaymath}
  T_{w}=(zq\partial_{q})^{\mu}+\sum_{i=1}^{\mu}(-z)^{i}\sigma_{i}(c_{1}, \ldots ,c_{\mu})(zq\partial_{q})^{\mu-i}-qw^{-w}
\end{displaymath}
where $\sigma_{i}$ are elementary symmetric polynomials.  As
$c_{1}=\cdots=c_{n}=0$, we have
$\sigma_{i}(c_{1}, \ldots ,c_{\mu})=0$ for $i$ in $\{\mu-n+1, \ldots ,\mu\}$, so, in the basis
$(\widetilde{\nabla}^{A,\smal}_{zq\partial_{q}})^{i}\phi_{0}$, we have
:
\begin{align*}
  \widetilde{\nabla}^{A,\smal}_{q\partial_{q}}=q\partial_{q}+\frac{1}{z}B_{-1}(q)+B_{0}(q)+\cdots+z^{\mu-n-1}B_{\mu-n-1}(q)
\end{align*}
which is not of the form \eqref{eq:log+rk1,connection}.  In the case $\ppit (1,2,2)$ we can verify for instance that $B_{1}(q)$ is not the zero matrix 
(it has a coefficient $\frac{1}{4}$ on the last column).
Notice that
Guest and Sakai consider an analogous problem in \cite{Guest} and they solve
it using the so-called ``Birkhoff factorization'' (see \cite{Guest-topology} or chapter
6 in \cite{Guest-book}). In general finding the good extension is a
difficult problem but in our case, it can be done.   $\blacklozenge$
\end{remark}

\begin{proof}[Proof of Theorem \ref{quantum}] 
From Proposition \ref{prop:nabla,basis,nonflat} and Theorem \ref{basevarphi},
the matrices of the connections in the bases $(P^{\bullet j})$ and $(\omega_{i})$ are the same.
 For the pairing, it is enough to notice that 
$$\widetilde{S}^{A,\smal}(P^{\bullet i}, P^{\bullet j}) =S^{B}(\gamma (P^{\bullet i}), \gamma (P^{\bullet j}))$$
but this follows from the formula \eqref{eq:paring,omega,phir} and proposition \ref{prop:pairingglobal}.  
\end{proof}

\begin{remark} The definition of $\gamma$ in \eqref{eq:19} identifies
  $P^{\bullet j}\leftrightarrow \omega_{j}$ for $j\in\{0, \ldots
  ,\mu-1\}$. This also implies that the flat sections
  $\omega_{i}^{\orb}$ (see \eqref{eq:chg,base,orb,flat,varphi}) are
  identified with the flat sections $\mathbf{1}_{c_{i}}P^{r(i)}$ (see
  Remark \ref{rem:A,side,flat}) where $r(i):=\#\{k\mid k<i \mbox{ and }
  c_{k}=c_{i}\}$.
$\blacklozenge$
\end{remark}

\noindent We can thus identify the $A$-model quantum differential system $\mathcal{S}^{A}_{w}$ and the $B$-model quantum differential system $\mathcal{S}^{B}_{w}$: 
the result is a quantum differential system which will we denote by 
$$\mathcal{S}_{w}:=(\mathcal{M},H,\nabla ,S, n).$$
We also get, with the help of proposition \ref{MStoSTF}, a  Frobenius type structure 
$$\fit_{w}=(\mathcal{M}, E, \bigtriangledown , R_{0}, R_{\infty},\Phi ,g)$$
on $\mathcal{M}$ where $E:= G_{0}/\theta G_{0}=\Omega^{n}(U)[x,x^{-1}]/d_{u}F\wedge\Omega^{n-1}(U)[x,x^{-1}]$.

\begin{definition} 
\label{MSS}
(1) The tuple $\mathcal{S}_{w}$ is called the $w$-quantum differential system.\\
(2) The tuple $\fit_{w}$ is called the $w$-Frobenius type structure. 
\end{definition}

\noindent We will use these objects in order to get Frobenius manifolds.

\subsection{The small quantum product and the Jacobian ring} 

Using thoerem \ref{quantum} we can give an interpretation of the small quantum product in terms of a product on a Jacobian ring, that is 
in terms of commutative algebra.\\

For $k=0,\cdots ,\mu -1$, put
$\omega_{k}=g_{k}\omega_{0}$ where
$g_{0}=1$  and
$$g_{k}=\frac{x}{w^{a(k)}}u^{a(k)}$$
for $k=1,\cdots ,\mu -1$ (see section \ref{deformation}). We define now the product $*$ on $E:=G_{0}/\theta G_{0}$
by
\begin{align}\label{eq:22}
[\omega_{i}]*_{x}[\omega_{j}]:=[g_{i}g_{j}\omega_{0}]
\end{align}
where $[\ ]$ denotes the class in $E$, which we identify, using $\omega_{0}$, to the Jacobian ring 
\begin{displaymath}
 \frac{\cit[x,x^{-1}][u_{1},u_{1}^{-1},\cdots ,u_{n},u_{n}^{-1}]}{(\frac{\partial F}{\partial u_{1}},\cdots , \frac{\partial F}{\partial u_{n}})}.
\end{displaymath}

\begin{proposition}\label{prod}

  Let $i,j\in \{0,\cdots ,\mu -1\}$. If $i+j\geq \mu$, we denote
  $\overline{i+j}:=i+j-\mu$.\\
  (1) We have, in $E$,
\begin{align}\label{prodvarphi}
[\omega_{i}]*_{x}[\omega_{j}]=\left\{ \begin{array}{ll}
[\omega_{i+j}] &  \mbox{if $i+j\leq \mu -1$,} \\
\frac{x}{w^{w}}[\omega_{\overline{i+j}}] & \mbox{if  $i+j\geq \mu$}
\end{array}
\right .
\end{align}
In particular, $[\omega_{i}]=[\omega_{1}]^{*i}:= \underbrace{[\omega_{1}]*_{x}\cdots *_{x}[\omega_{1}]}_{\mbox{$i$ times}}$.\\
(2) We have, in $H_{\orb}^{*}(\ppit(w),\cit )$, 
\begin{align}\label{prodPi}
P^{\bullet i}\bullet_{q} P^{\bullet j}=\left\{ \begin{array}{ll}
P^{\bullet (i+j)} &  \mbox{if $i+j\leq \mu -1$,} \\
\frac{q}{w^{w}}P^{\bullet (\overline{i+j})} & \mbox{if  $i+j\geq \mu$}
\end{array}
\right .
\end{align}
\end{proposition}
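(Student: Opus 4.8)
The plan is to identify the space $E = G_{0}/\theta G_{0}=\Omega^{n}(U)/d_{u}F\wedge\Omega^{n-1}(U)$ with an explicit truncated polynomial ring in one variable, in which both formulas become transparent. Write $v:=xu_{0}=x/(u_{1}^{w_{1}}\cdots u_{n}^{w_{n}})$ and recall $\omega_{0}^{\varphi}=\omega_{0}=\frac{du_{1}}{u_{1}}\wedge\cdots\wedge\frac{du_{n}}{u_{n}}$. By construction $*_{x}$ is the multiplication of the Jacobi algebra of $F_{x}$, with unit $[\omega_{0}^{\varphi}]$ (indeed $h_{0}^{\varphi}=1$), so it suffices to compute in that algebra.

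First I would record the Jacobi relations. For each $\ell\in\{1,\ldots,n\}$ and each monomial $g$, wedging $d_{u}F$ with $g\,\frac{du_{1}}{u_{1}}\wedge\cdots\widehat{\frac{du_{\ell}}{u_{\ell}}}\cdots\wedge\frac{du_{n}}{u_{n}}$ shows that $u_{\ell}\frac{\partial F}{\partial u_{\ell}}\,g\,\omega_{0}\equiv 0$ in $E$; since $u_{\ell}\frac{\partial F}{\partial u_{\ell}}=u_{\ell}-w_{\ell}xu_{0}=u_{\ell}-w_{\ell}v$, this reads
\[
u_{\ell}\,g\,\omega_{0}\ \equiv\ w_{\ell}\,v\,g\,\omega_{0}\pmod{d_{u}F\wedge\Omega^{n-1}(U)}.
\]
Because $d_{u}F\wedge\Omega^{n-1}(U)$ is an $\mathcal{O}(U)$-submodule, $E$ is a ring, and these relations present it as $\cit[x,x^{-1}][v]/(v^{\mu}-xw^{-w})$: substituting $u_{\ell}=w_{\ell}v$ into the identity $v\,u_{1}^{w_{1}}\cdots u_{n}^{w_{n}}=x$ and using $\sum_{\ell\ge 1}w_{\ell}=\mu-1$ (from $w_{0}=1$) gives $w^{w}v^{\mu}=x$. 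Peeling the factors $u_{\ell}$ off $h_{k}^{\varphi}$ one at a time with the relation above, and using $a(k)_{0}=1$ together with $\sum_{\ell\ge 1}a(k)_{\ell}=k-1$ from Section \ref{combinatoire}, yields $[\omega_{k}^{\varphi}]=[v^{k}\omega_{0}]$ for $0\le k\le\mu-1$. Part (1) is then immediate: $[\omega_{i}^{\varphi}]*_{x}[\omega_{j}^{\varphi}]=[v^{i+j}\omega_{0}]$, which is $[\omega_{i+j}^{\varphi}]$ when $i+j\le\mu-1$, and is $xw^{-w}[\omega_{\overline{i+j}}^{\varphi}]$ when $i+j\ge\mu$ after applying $[v^{\mu}\omega_{0}]=xw^{-w}[\omega_{0}^{\varphi}]$; the identity $[\omega_{i}^{\varphi}]=[\omega_{1}^{\varphi}]^{*i}$ is the case $j=1$ iterated, using $[\omega_{1}^{\varphi}]=[v\omega_{0}]$.

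For part (2) I would transport this through the A-model. By Proposition \ref{prop:nabla,basis,nonflat} the matrix of quantum multiplication $P\bullet_{q}$ in the basis $((P^{\bullet_{q}})^{i})$ is exactly $C^{\varphi}(q)$, which is the matrix of multiplication by $v$ in $\cit[q,q^{-1}][v]/(v^{\mu}-qw^{-w})$ with respect to $1,v,\ldots,v^{\mu-1}$; equivalently this is the content of Theorem \ref{quantum}, under which $\gamma$ carries $(P^{\bullet_{q}})^{i}$ to $\omega_{i}^{\varphi}$ and intertwines $P\bullet_{q}$ with $*_{x}$-multiplication by $\omega_{1}^{\varphi}$. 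Since $(P^{\bullet_{q}})^{i}=(P\bullet_{q})^{i}(\mathbf{1})$ by definition and the quantum product is associative, $P^{\bullet i}\bullet_{q}P^{\bullet j}=(P\bullet_{q})^{i+j}(\mathbf{1})$, and iterating the cyclic action of $C^{\varphi}(q)$ (with $P\bullet_{q}(P^{\bullet(\mu-1)})=qw^{-w}\mathbf{1}$) produces precisely the two cases of \eqref{prodPi}.

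I expect the only genuine work to lie in the step $[\omega_{k}^{\varphi}]=[v^{k}\omega_{0}]$: one must match the normalizing constant $w^{a(k)}$ against $\prod_{\ell}w_{\ell}^{a(k)_{\ell}}$ and confirm the exponent count $\sum_{\ell\ge 1}a(k)_{\ell}=k-1$, and likewise check $\sum_{\ell\ge 1}w_{\ell}=\mu-1$ for the truncation $v^{\mu}=xw^{-w}$. Everything else is formal once $E$ is presented as $\cit[x,x^{-1}][v]/(v^{\mu}-xw^{-w})$.
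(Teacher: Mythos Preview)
Your proof is correct and follows essentially the same route as the paper. The paper compresses part (1) into the single observation that $u^{a(i)}w^{-a(i)}\omega_{0}^{\varphi}=x^{i-1}u_{0}^{i}\omega_{0}^{\varphi}$ in $E$ (which, with your $v=xu_{0}$, is precisely your identity $[\omega_{i}^{\varphi}]=[v^{i}\omega_{0}]$), and for part (2) simply cites Proposition~\ref{prop:nabla,basis,nonflat}, which is exactly your argument via the cyclic matrix $C^{\varphi}(q)$; your mention of Theorem~\ref{quantum} is unnecessary but not circular, since that theorem is proved earlier and independently of this proposition.
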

\begin{proof}
  (1) Because $u_{0}u_{1}^{w_{1}}\cdots u_{n}^{w_{n}}=1$ and, for
  $i\geq 1$,
  $\frac{u^{a(i)}}{w^{a(i)}}\omega_{0}=x^{i-1}u_{0}^{i}\omega_{0}$
  in $E$. (2) Follows from proposition \ref{prop:nabla,basis,nonflat}.
\end{proof}

\noindent Notice that the matrix $\frac{1}{\mu}A_{0}(x)$ in theorem \ref{basevarphi} represents the endomorphism $[\omega_{1}]*_{x}$ in the basis $[\omega]$.

\noindent At the end, we get the announced relationship:

\begin{corollary}\label{productJacobi} The product $*_{x}$ is the mirror partner of the small quantum product $\bullet_{q}$: we have 
$$[\gamma (P^{\bullet i})]*_{x}[\gamma (P^{\bullet j})]=[\gamma (P^{\bullet i}\bullet_{q} P^{\bullet j})].$$
\end{corollary}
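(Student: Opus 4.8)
The plan is to derive Corollary \ref{productJacobi} as a direct consequence of Proposition \ref{prod}, which has already established the two product formulas \eqref{prodvarphi} and \eqref{prodPi} in parallel form. The key observation is that the isomorphism $\gamma$ is defined precisely by $\gamma(P^{\bullet j})=\omega_{j}^{\varphi}$, so that $[\gamma(P^{\bullet i})]=[\omega_{i}^{\varphi}]$ in $E=G_{0}/\theta G_{0}$ under the identification $q\leftrightarrow x$. The entire statement should then reduce to matching the two piecewise formulas term by term.

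First I would compute the left-hand side: by definition of $*_{x}$ and of $\gamma$, we have
$$[\gamma(P^{\bullet i})]*_{x}[\gamma(P^{\bullet j})]=[\omega_{i}^{\varphi}]*_{x}[\omega_{j}^{\varphi}],$$
which by Proposition \ref{prod}(1) equals $[\omega_{i+j}^{\varphi}]$ when $i+j\leq \mu-1$ and $xw^{-w}[\omega_{\overline{i+j}}^{\varphi}]$ when $i+j\geq \mu$. Next I would compute the right-hand side: by Proposition \ref{prod}(2), the small quantum product $P^{\bullet i}\bullet_{q}P^{\bullet j}$ equals $P^{\bullet(i+j)}$ when $i+j\leq \mu-1$ and $qw^{-w}P^{\bullet(\overline{i+j})}$ when $i+j\geq \mu$. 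Applying $\gamma$ (extended $\cit[x,x^{-1}]$-linearly with $q\leftrightarrow x$) gives $[\gamma(P^{\bullet i}\bullet_{q}P^{\bullet j})]=[\omega_{i+j}^{\varphi}]$ respectively $xw^{-w}[\omega_{\overline{i+j}}^{\varphi}]$.

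Comparing the two computations case by case, the results coincide in both ranges, which establishes the corollary. The one point requiring a word of care is the compatibility of the scalar $q$ with $x$ under $\gamma$: since $\gamma$ is the $\cit$-algebra isomorphism underlying the identification of $\mathcal{S}^{A}_{w}$ and $\mathcal{S}^{B}_{w}$ of Theorem \ref{quantum}, with base coordinates identified via $(z,q)\mapsto(\theta,x)$, the multiplier $q$ on the $A$-side is sent to $x$ on the $B$-side, so the factors $qw^{-w}$ and $xw^{-w}$ match. I do not anticipate any genuine obstacle here; the work is entirely front-loaded into Proposition \ref{prod}, and the corollary is a bookkeeping comparison of the two formulas, the only subtlety being the careful tracking of the $q\leftrightarrow x$ identification so that the exponents and coefficients align in the twisted range $i+j\geq\mu$.
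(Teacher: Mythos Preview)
Your proposal is correct and follows essentially the same approach as the paper: the paper's proof is the one-liner ``Follows from proposition \ref{prod} and the definition of $\gamma$,'' and you have simply unpacked this by matching the two cases of \eqref{prodvarphi} and \eqref{prodPi} through the identification $\gamma(P^{\bullet j})=\omega_{j}^{\varphi}$ and $q\leftrightarrow x$.
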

\begin{proof} Follows from proposition \ref{prod} and the definition of $\gamma$.
 \end{proof}

\section{Limits}

\label{limite}

Up to now, we have worked on $\mathcal{M}=\cit^{*}$ and we want now to define a
limit at $0$ of the structure $\mathcal{S}_{w}$ ({\em resp.} $\fit_{w}$). This
should be of course a quantum differential system ({\em resp.} a Frobenius type structure)
on $\ppit^{1}$ ({\em resp.} on a point), as canonical as possible.
This limit
will be constructed with the help of the Kashiwara-Malgrange
$V$-filtration at the origin. 
The desired limit Frobenius type structure (on a
point) will be then obtained using proposition \ref{MStoSTF}.  

Usually on the A-side, one recovers the cup product from the quantum
product setting $q=0$. This works nicely in the basis
$(\mathbf{1}_{f_{i}}P^{j})$. Nevertheless, when one works with the quantum
differential system, like we do, the natural basis is not
$(\mathbf{1}_{f_{i}}P^{j})$ but $(P^{\bullet j})$ (see Remark
\ref{rem:A,side,flat}). As $(P^{\bullet j})$ depends on $q$, it make
no sense to set directly $q=0$. For example the matrix $C(q=0)$ is not
the endomorphism $P\cup_{\orb}$. So to recover the limit at ``q=0'',
we need to give a grading by the Kashiwara-Malgrange $V$-filtration (see {\em f.i} \cite[2.e and A.b.3]{DoSa1} for the definition of this filtration).

\subsection{Canonical limits of the structures $\mathcal{S}_{w}$ and $\fit_{w}$}

\label{limSTF}

We apply the recipe announced above. For convenience reasons, we start from the
$B$-model and we use the notations of section \ref{Bmodel}, forgetting the index $B$.

\subsubsection{The $V$-filtration at $x=0$}
Recall the basis $\omega =(\omega_{0},\cdots ,\omega_{\mu -1})$ of $G_{0}$ over $\cit [x, x^{-1},\theta ]$, which is also a basis of $G$ over $\cit [x, x^{-1},\theta ,\theta^{-1}]$.
Put $v(\omega_{0})=\cdots =v(\omega_{n})=0$ and, for $k=n+1,\cdots ,\mu -1$,
$v(\omega_{k})=c_{k}$. Define, for $0\leq \alpha <1$,
$$V^{\alpha}G=\sum_{\alpha\leq v(\omega_{k})}\cit [x][\theta ,\theta^{-1}]\omega_{k}+
x\sum_{\alpha >v({\omega}_{k})}\cit [x][\theta ,\theta^{-1}]\omega_{k},$$
$$V^{>\alpha}G=\sum_{\alpha < v(\omega_{k})}\cit [x][\theta ,\theta^{-1}]\omega_{k}+
x\sum_{\alpha \geq v(\omega_{k})}\cit [x][\theta ,\theta^{-1}]\omega_{k}$$
and $V^{\alpha +p}G=x^{p}V^{\alpha}G$ for $p\in\zit$ and $\alpha\in [0,1[$. This gives a decreasing filtration $V^{\bullet}$ of $G$ 
by $\cit [x][\theta ,\theta^{-1}]$-submodules
such that
$$V^{\alpha}G=\cit [\theta ,\theta^{-1}]\langle\omega_{k}|v(\omega_{k})=\alpha\rangle+V^{>\alpha}G.$$
Notice that the lattice ${\cal L}$ (see section \ref{canonicalbasis}) is equal to $V^{0}G$  and that ${\cal L}/x{\cal L}=V^{0}G/V^{1}G$. We will put $G^{\alpha}:=V^{\alpha}G/V^{>\alpha}G$ and $\overline{G}:=\oplus_{\alpha\in [0,1[}G^{\alpha}$.

\begin{lemma} \label{Jordan}
(1) For each $\alpha$, $(x\nabla_{\partial_{x}}-\alpha )$ is nilpotent on $G^{\alpha}$.\\
(2) Let $N$ be the nilpotent endomorphism of $\overline{G}$ which restricts to $(x\nabla_{\partial_{x}}-\alpha )$ on $G^{\alpha}$. Its Jordan blocks are in one to one correspondence with the maximal constant sequences in $(c_{0},\cdots ,c_{\mu -1})$ and the corresponding sizes are the same.\\
(3) The classes $[\omega_{0}],\cdots ,[\omega_{\mu -1}]$ give a basis $[\omega]$ of $\overline{G}$ over $\cit [\theta ,\theta^{-1}]$.
\end{lemma}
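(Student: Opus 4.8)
The plan is to work entirely in the canonical basis $\omega^{\varphi}=(\omega_0^{\varphi},\dots,\omega_{\mu-1}^{\varphi})$, exploiting the fact that the $V$-filtration has been defined so that each $\omega_k^{\varphi}$ carries a single well-defined valuation. First I would record the two combinatorial inputs from section \ref{combinatoire}: since $c_0=\cdots=c_n=0$, one has $v(\omega_k^{\varphi})=c_k$ for \emph{every} $k\in\{0,\dots,\mu-1\}$, and by construction the sequence $(c_0,\dots,c_{\mu-1})$ is non-decreasing, its maximal constant runs being exactly the blocks $\underbrace{f_i,\dots,f_i}_{d_i}$. In particular the distinct values of $v$ biject with the $f_i$, one per graded piece $G^{\alpha}$, the run of value $\alpha=f_i$ having length $d_i$.

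Next I would read off the action of $x\nabla^{B}_{\partial_x}$ from Theorem \ref{basevarphi}. The matrix $-\mu^{-1}A_0^{\varphi}(x)/\theta+R^{\varphi}$ gives the recursion
$$x\nabla^{B}_{\partial_x}\omega_k^{\varphi}=c_k\,\omega_k^{\varphi}-\tfrac{1}{\theta}\,\omega_{k+1}^{\varphi}\quad(0\le k\le\mu-2),\qquad x\nabla^{B}_{\partial_x}\omega_{\mu-1}^{\varphi}=c_{\mu-1}\,\omega_{\mu-1}^{\varphi}-\tfrac{x}{w^{w}\theta}\,\omega_0^{\varphi}.$$
Since $v(\omega_{k+1}^{\varphi})=c_{k+1}\ge c_k$ and $v(x\omega_0^{\varphi})=1>c_{\mu-1}$, each right-hand side lies in $V^{c_k}G$; the same valuation bookkeeping applied to $V^{>\alpha}G$ shows that $x\nabla^{B}_{\partial_x}$ preserves $V^{\bullet}$. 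As $x\nabla^{B}_{\partial_x}$ is $\cit[\theta,\theta^{-1}]$-linear (because $\partial_x\theta=0$), it therefore induces a $\cit[\theta,\theta^{-1}]$-linear endomorphism of each $G^{\alpha}=V^{\alpha}G/V^{>\alpha}G$.

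For (1) and (2), fix $\alpha$ and an index $k$ with $c_k=\alpha$. Because $c_k=\alpha$ holds precisely for the indices $k$ in the run of value $\alpha$, the recursion passes to $G^{\alpha}$ as $(x\nabla^{B}_{\partial_x}-\alpha)[\omega_k^{\varphi}]=-\tfrac{1}{\theta}[\omega_{k+1}^{\varphi}]$, and this class equals the next basis vector of $G^{\alpha}$ when $c_{k+1}=\alpha$ and vanishes when $c_{k+1}>\alpha$, i.e. at the end of the run; at the very top, $(x\nabla^{B}_{\partial_x}-c_{\mu-1})[\omega_{\mu-1}^{\varphi}]=-\tfrac{x}{w^{w}\theta}[\omega_0^{\varphi}]=0$ since $v(x\omega_0^{\varphi})=1$. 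Thus on each $G^{\alpha}$ the operator $N=x\nabla^{B}_{\partial_x}-\alpha$ is the shift sending each basis vector to a unit multiple of the next and the last to $0$: a cyclic nilpotent operator, hence (after rescaling the basis by units of $\cit[\theta,\theta^{-1}]$) a single Jordan block of size equal to the length of the run. Summing over $\alpha$ proves both the nilpotency asserted in (1) and the bijection of Jordan blocks with maximal constant runs, with matching sizes $d_i$, asserted in (2).

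Finally, (3) is immediate from the defining identity $V^{\alpha}G=\cit[\theta,\theta^{-1}]\langle\omega_k^{\varphi}\mid v(\omega_k^{\varphi})=\alpha\rangle+V^{>\alpha}G$, which exhibits $\{[\omega_k^{\varphi}]\mid c_k=\alpha\}$ as a $\cit[\theta,\theta^{-1}]$-basis of $G^{\alpha}$; summing over $\alpha$ and using that $\omega^{\varphi}$ is already a basis of $G$ over $\cit[x,x^{-1},\theta,\theta^{-1}]$ gives $[\omega^{\varphi}]$ as a basis of $\overline{G}$. The one genuinely delicate point, and the step I expect to require the most care, is the top graded piece $G^{c_{\mu-1}}$: there one must use the explicit factor $x$ in the wraparound entry of $A_0^{\varphi}(x)$ to see that the connection \emph{truncates} rather than closes up the last Jordan string, which is exactly what makes $N$ nilpotent, instead of invertible, on $G^{c_{\mu-1}}$.
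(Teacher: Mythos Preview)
Your proof is correct and follows essentially the same approach as the paper's: both read off the action of $x\nabla_{\partial_x}$ from Theorem \ref{basevarphi}, compute $(x\nabla_{\partial_x}-c_k)\omega_k^{\varphi}=-\theta^{-1}\omega_{k+1}^{\varphi}$ (with the wraparound term carrying the extra factor $x$), and then observe that this class vanishes in $G^{c_k}$ precisely when $c_{k+1}>c_k$ or at the top, yielding one nilpotent Jordan string per constant run; part (3) is likewise deduced directly from the defining splitting of $V^{\alpha}G$. Your write-up is somewhat more explicit about the filtration being preserved and about the $\cit[\theta,\theta^{-1}]$-linearity, but the substance is identical.
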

\begin{proof} (1) It suffices to prove the assertion for $\alpha \in [0,1[$. It follows from theorem \ref{basevarphi} that we have
$$x\nabla_{\partial_{x}}\omega_{k}=-\frac{1}{\theta}\omega_{k+1}$$
for $k=0,\cdots ,n-1$ and $x\nabla_{\partial_{x}}\omega_{n}\in V^{>0}G$. Moreover we have, for $k=n+1,\cdots ,\mu -2$,  
$$(x\nabla_{\partial_{x}}-c_{k})\omega_{k}=-\frac{1}{\theta}\omega_{k+1}$$
and this is equal to $0$ in $G^{v(\omega_{k})}$ if  $c_{k+1}> c_{k}$. Last, 
$$(x\nabla_{\partial_{x}}-c_{\mu -1})\omega_{\mu -1}
=-\frac{1}{\theta}{x}{w^{-w}}\omega_{0}
\in x\sum_{v(\omega_{\mu -1})\geq v(\omega_{k})}\cit [x]\omega_{k}\subset V^{>c_{\mu -1}}G.$$
(2) follows from (1) and (3) follows from the definition of $V^{\bullet}$.
\end{proof}

\noindent The matrix of $N$ in the basis $[\omega]$ is ${B}{\theta^{-1}}$ where
$B_{i,j}=0$ if $i\neq j+1$, $B_{i+1, i}=-1$ if $c_{i}=c_{i-1}$ and $B_{i+1, i}=0$ if $c_{i}\neq c_{i-1}$ (notice that $-\mu B=A_{0}^{\plat}(0)$).\\

\begin{corollary}
The filtration $V^{\bullet}$ is the Kashiwara-Malgrange filtration at $x=0$.  
\end{corollary}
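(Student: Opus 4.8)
The plan is to characterize the Kashiwara--Malgrange filtration by its defining axioms together with its uniqueness, and then to check that the filtration $V^{\bullet}$ built from the canonical basis $\omega^{\varphi}$ satisfies them. Recall (see for instance \cite[V.2]{Sab1}) that along $x=0$ there is a \emph{unique} exhaustive, separated, decreasing and good filtration of $G$ by coherent $\cit[x][\theta,\theta^{-1}]$-submodules $V^{\alpha}G$, indexed by $\qit$, such that $xV^{\alpha}G\subset V^{\alpha+1}G$, such that $\nabla^{B}_{\partial_{x}}V^{\alpha}G\subset V^{\alpha-1}G$, and such that $x\nabla^{B}_{\partial_{x}}-\alpha$ is nilpotent on each graded piece $G^{\alpha}=V^{\alpha}G/V^{>\alpha}G$. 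By this uniqueness, it suffices to verify that the filtration introduced above meets all these requirements.

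First I would observe that $V^{\bullet}$ is by construction a decreasing filtration by $\cit[x][\theta,\theta^{-1}]$-submodules, and that the periodicity relation $V^{\alpha+p}G=x^{p}V^{\alpha}G$ for $p\in\zit$ makes it exhaustive and separated; the same relation gives $xV^{\alpha}G=V^{\alpha+1}G$, so in particular $xV^{\alpha}G\subset V^{\alpha+1}G$, and, because there are only finitely many jumps $c_{0},\dots,c_{\mu-1}$ in $[0,1[$ and each $V^{\alpha}G$ is finitely generated over $\cit[x][\theta,\theta^{-1}]$, the filtration is good. The one axiom requiring a computation is the stability under the connection, namely $\nabla^{B}_{\partial_{x}}V^{\alpha}G\subset V^{\alpha-1}G$, which is equivalent to $x\nabla^{B}_{\partial_{x}}V^{\alpha}G\subset V^{\alpha}G$. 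Since $x\nabla^{B}_{\partial_{x}}$ obeys the Leibniz rule and $x\partial_{x}$ preserves $\cit[x][\theta,\theta^{-1}]$, it is enough to check that each generator $\omega^{\varphi}_{k}$ is sent into $V^{v(\omega^{\varphi}_{k})}G$. This is exactly the content of the formulas established in the proof of Lemma \ref{Jordan}: $x\nabla^{B}_{\partial_{x}}\omega^{\varphi}_{k}=-\theta^{-1}\omega^{\varphi}_{k+1}$ for $0\le k\le n-1$, $x\nabla^{B}_{\partial_{x}}\omega^{\varphi}_{n}\in V^{>0}G$, $x\nabla^{B}_{\partial_{x}}\omega^{\varphi}_{k}=c_{k}\omega^{\varphi}_{k}-\theta^{-1}\omega^{\varphi}_{k+1}$ for $n+1\le k\le\mu-2$, and $x\nabla^{B}_{\partial_{x}}\omega^{\varphi}_{\mu-1}=c_{\mu-1}\omega^{\varphi}_{\mu-1}-\theta^{-1}xw^{-w}\omega^{\varphi}_{0}$; using $v(\omega^{\varphi}_{k+1})\ge v(\omega^{\varphi}_{k})$ and $x\omega^{\varphi}_{0}\in V^{1}G\subset V^{c_{\mu-1}}G$, each right-hand side lands in the required $V^{v(\omega^{\varphi}_{k})}G$.

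Finally, the nilpotence of $x\nabla^{B}_{\partial_{x}}-\alpha$ on $G^{\alpha}$ is precisely Lemma \ref{Jordan}(1). Having thus verified all the defining properties, the uniqueness of the Kashiwara--Malgrange filtration yields the identification. I expect the only genuinely delicate point to be the bookkeeping of conventions, namely that our normalization $v(\omega^{\varphi}_{k})=c_{k}\in[0,1[$ matches the standard indexing in which $G^{\alpha}$ is the generalized $\alpha$-eigenspace of the residue of $\nabla^{B}_{\partial_{x}}$; the analytic heart of the verification is already supplied by the explicit connection matrix of Theorem \ref{basevarphi} and by Lemma \ref{Jordan}.
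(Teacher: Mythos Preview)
Your proposal is correct and follows the same approach as the paper: both argue that the defining axioms of the Kashiwara--Malgrange filtration are satisfied, invoking Lemma \ref{Jordan} for the nilpotence of $x\nabla^{B}_{\partial_{x}}-\alpha$ on the graded pieces, and conclude by uniqueness. The paper's proof is a one-line reference to Lemma \ref{Jordan}, whereas you have spelled out explicitly the remaining axioms (goodness, exhaustivity, periodicity, stability under $\nabla^{B}_{\partial_{x}}$), but the substance is identical.
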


\begin{proof}
By the previous lemma, the filtration $V^{\bullet}$ satisfies all the characteristic properties of the Kashiwara-Malgrange filtration. 
\end{proof}

\subsubsection{Limits}
\label{limitSTF}

The free $\cit [\theta ,\theta^{-1}]$-module $\overline{G}$ is equipped with a connection $\overline{\nabla}$ in the basis $[\omega]$ is
$$\left(\frac{\overline{A}_{0}}{\theta}+A_{\infty}\right)\frac{d\theta}{\theta}$$
where $\overline{A}_{0}=-\mu B$ and $A_{\infty}=\Diag
(\alpha_{0},\cdots ,\alpha_{\mu -1})$.  
We now need a limit bilinear form.
Let $\overline{G}_{0}$ be the $\cit
[\theta]$-submodule of $\overline{G}$ generated by
$[\omega_{0}],\cdots ,[\omega_{\mu-1}]$ and define
$$\overline{S}:\overline{G}_{0}\times \overline{G}_{0}\rightarrow \cit [\theta ]\theta^{n}$$
by
$$\overline{S}([\omega_{k}], [\omega_{n-k}])=\frac{1}{w_{1}\cdots w_{n}}\theta^{n}$$
for $k=0,\cdots ,n$ (in which case $c_{k}=c_{n-k}=0$),
$$\overline{S}([\omega_{k}], [\omega_{\mu +n-k}])=\frac{1}{w_{1}^{w_{1}+1}\cdots w_{n}^{w_{n}+1}}\theta^{n}$$
for $k=n+1,\cdots ,\mu -1$ (in which case $c_{k}+c_{\mu +n-k}=1$) and $\overline{S}([\omega_{i}],[\omega_{j}])=0$ otherwise. 
The pairing $\overline{S}$ is induced by $S$ on $\overline{G}$ (hence it is indeed a limit): this is shown as in \cite[remark 3.6]{Sab3} (with only mild modifications) because
$$S(V^{\beta}G, V^{1-\beta}G)\subset x\cit [x,\theta ,\theta^{-1}]$$
if $\beta\neq 0$ (and thus the induced bilinear form on the graded pieces is obtained taking the coefficient of $x$) and
$$S(V^{0}G, V^{0}G)\subset \cit [x,\theta ,\theta^{-1}]$$
where $V^{\bullet}$ is the Kashiwara-Malgrange filtration at $x=0$ defined above.

As in section \ref{deformation}, we get an extension of $\overline{G}_{0}$ as a trivial bundle $\overline{H}$ on $\ppit^{1}$, equipped with a connection $\overline{\nabla}$ and a pairing $\overline{S}$.

\begin{theorem}
The tuple
$\overline{\mathcal{S}}_{w}=(\overline{H}, \overline{\nabla} , \overline{S}, n)$ 
is a quantum differential system on $\ppit^{1}$.
\end{theorem}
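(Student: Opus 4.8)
The plan is to verify, one at a time, the three requirements of Definition \ref{defi:Saito,structure} in the case where the base $M$ is a point (see Remark \ref{FTSonapoint}(1)); since the connection matrix and the pairing have already been made explicit, each verification should reduce to reading off data. First, $\overline{H}$ is trivial by construction: the classes $[\omega_0^\varphi],\dots,[\omega_{\mu-1}^\varphi]$ freely generate $\overline{G}_0$ over $\cit[\theta]$ by Lemma \ref{Jordan}(3), and, exactly as in section \ref{deformation}, gluing this chart with the one at $\theta=\infty$ produces a trivial bundle on $\ppit^1$.

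Next I would analyse the connection $\overline{\nabla}$, whose matrix in the basis $[\omega^\varphi]$ is $\left(\frac{\overline{A}_0}{\theta}+A_\infty\right)\frac{d\theta}{\theta}$. At $\theta=0$ one has $\theta^2\overline{\nabla}_{\partial_\theta}=\theta^2\partial_\theta+\overline{A}_0+\theta A_\infty$, which is holomorphic, so the pole has Poincar\'e rank one, i.e. is of order $1$ in the sense of Definition \ref{defi:Saito,structure}. Setting $\tau=\theta^{-1}$, the connection form becomes $-\left(\overline{A}_0\,d\tau+A_\infty\frac{d\tau}{\tau}\right)$, which is logarithmic at $\tau=0$. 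Flatness is automatic here: the total space $\ppit^1$ is a curve, so the curvature $\overline{\nabla}^2$ lies in $\Omega^2(\ppit^1)=0$.

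It remains to treat $\overline{S}$. Nondegeneracy and the fact that it takes values in $\theta^n\cit[\theta]$ are immediate from its definition: the assignment $k\mapsto n-k$ on $\{0,\dots,n\}$ together with $k\mapsto \mu+n-k$ on $\{n+1,\dots,\mu-1\}$ is an involution of $\{0,\dots,\mu-1\}$ pairing each $[\omega_k^\varphi]$ nontrivially with exactly one other basis vector, so the Gram matrix is anti-triangular and invertible; the $(-1)^n$-symmetry follows from this involution together with the sign rule $\theta\mapsto-\theta$ defining $i^*$. Since the base is a point, the $\overline{\nabla}$-flatness of $\overline{S}$ reduces to the single identity \eqref{eq:13} (with $z$ replaced by $\theta$), which is equivalent to $\overline{A}_0^*=\overline{A}_0$ and $A_\infty+A_\infty^*=n\,\id$ for the adjoint with respect to $\overline{S}$. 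The second relation is precisely the symmetry $\alpha_k+\alpha_{\mu+n-k}=n$ (and $\alpha_k+\alpha_{n-k}=n$) of Corollary \ref{lesalphak}, exactly as in the proof of Lemma \ref{symetrie}.

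The main obstacle is therefore the self-adjointness of $\overline{A}_0=-\mu B$. I would handle it in one of two ways. The conceptual route is to use that $\overline{S}$ is the form induced by the $\nabla^B$-flat pairing $S^B$ on the graded module $\overline{G}$ attached to the Kashiwara--Malgrange filtration (the compatibility established, after \cite{Sab3}, in the construction preceding the statement) and that $\overline{\nabla}$ is the corresponding induced connection, so that flatness passes to the limit. The hands-on route is a direct matrix check: by Lemma \ref{Jordan}, $\overline{A}_0$ sends $[\omega_k^\varphi]$ to a nonzero multiple of $[\omega_{k+1}^\varphi]$ exactly when $k$ and $k+1$ lie in the same maximal constant block of $(c_0,\dots,c_{\mu-1})$ and to $0$ otherwise; one then checks that this support is compatible with the two regimes $i+j=n$ and $i+j=\mu+n$ of $\overline{S}$, the only delicate points being the block boundaries, where the relevant entry of $\overline{A}_0$ vanishes, and the absence of a corner term (the entry $\mu x^{1-c_{\mu-1}}/w^w$ of $A_0^\plat$ specialises to $0$ at $x=0$). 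Either way one obtains $\overline{A}_0^*=\overline{A}_0$, which completes the verification that $\overline{\mathcal{S}}_w$ is a Saito structure on $\ppit^1$.
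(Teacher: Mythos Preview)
Your proposal is correct and follows essentially the same route as the paper: the nontrivial point is the $\overline{\nabla}$-flatness of $\overline{S}$, which you reduce (as does the paper) to $\overline{A}_0^*=\overline{A}_0$ and $A_\infty+A_\infty^*=n\,\id$, the latter coming from Corollary~\ref{lesalphak} and the former from the block structure of $\overline{A}_0$ given by Lemma~\ref{Jordan}. Your ``delicate points'' at the block boundaries and the vanishing corner term are exactly what the paper singles out as the key observation $\overline{S}(\overline{A}_0[\omega^\varphi_n],[\omega^\varphi_j])=0=\overline{S}([\omega^\varphi_n],\overline{A}_0[\omega^\varphi_j])$, since $\overline{A}_0[\omega^\varphi_n]=0$ and $[\omega^\varphi_0]\notin\mathrm{Im}\,\overline{A}_0$; your alternative ``conceptual route'' via the induced pairing on $\mathrm{gr}^V$ is a legitimate shortcut the paper alludes to but does not invoke in the proof itself.
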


\begin{proof}
  It is remains to show that $\overline{S}$ is $\overline{\nabla}$-flat, and it
  is enough to show that $(\overline{A}_{0})^{*}=\overline{A}_{0}$ and
  ${A}_{\infty}+({A}_{\infty})^{*}=n \id$.  The second
  equality follows easily from lemma \ref{lessk} and from the definition of
  $\overline{S}$. To show the first one, use moreover lemma \ref{Jordan}, the
  key point being that
  $\overline{S}(\overline{A}_{0}([\omega_{n}]),[\omega_{j}])=0=\overline{S}([\omega_{n}],\overline{A}_{0}([\omega_{j}]))$
  because, by lemma \ref{Jordan},
  $\overline{A}_{0}([\omega_{n}])=0$ and because
  $[\omega_{0}]$ does not belong to the image of
  $\overline{A}_{0}$.
\end{proof}
\begin{remark} It should be emphasized that the conclusion of the previous theorem is not always true if we work directly on ${\cal L}/x{\cal L}$,
that is if we forget the $gr^{V}$. $\blacklozenge$ 
\end{remark}

\begin{definition} \label{canonicallimitST}The tuple $\overline{\mathcal{S}}_{w}$
is the limit quantum differential system.
\end{definition}

Define now $\overline{E}=\overline{G}_{0}/\theta \overline{G}_{0}$ and let
$[\![\omega]\!]$ be the basis of $\overline{E}$ induced by
$[\omega]$. As explained in section \ref{FrobeniusSaito},
$\overline{E}$ is thus equipped with two endomorphisms
$\overline{R}_{0}$ and $\overline{R}_{\infty}$ (with respective matrices $\overline{A}_{0}$ and $-A_{\infty}$) and with a
nondegenerate bilinear form $\overline{g}$ obtained from $\overline{S}$ as in
remark \ref{gsurJacobi}.

\begin{corollary} \label{existlimFTS} The tuple 
 $$\overline{\fit}_{w}=(\overline{E}, \overline{R}_{0},\overline{R}_{\infty},\overline{g})$$
is a Frobenius type structure on a point. 
\end{corollary}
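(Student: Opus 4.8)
The plan is to obtain the corollary as a direct instance of the general passage from a Saito structure to a Frobenius type structure recorded in Proposition \ref{MStoSTF}, applied to the limit Saito structure $\overline{\mathcal{S}}_{w}=(\overline{H},\overline{\nabla},\overline{S},n)$ on $\ppit^{1}$ produced by the preceding theorem. Since the base is a single point, the Higgs field $\Phi$ and the connection $\bigtriangledown$ of that construction are vacuous---there are no vector fields on a point---so Proposition \ref{MStoSTF} outputs exactly a tuple of the shape $(E,R_{0},R_{\infty},g)$ demanded by Remark \ref{FTSonapoint}(2).

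First I would confirm that the objects $\overline{E},\overline{R}_{0},\overline{R}_{\infty},\overline{g}$ defined by hand before the statement are precisely those manufactured by Proposition \ref{MStoSTF} from $\overline{\mathcal{S}}_{w}$: with $E=\overline{E}=\overline{G}_{0}/\theta\overline{G}_{0}$, the recipes $R_{0}[\sigma]=[\theta^{2}\overline{\nabla}_{\partial_{\theta}}\sigma]$, $R_{\infty}[\sigma]=[\overline{\nabla}_{\tau\partial_{\tau}}\sigma]$ (with $\tau=\theta^{-1}$) and $g=\theta^{-n}\overline{S}$ applied to the basis $[\omega^{\varphi}]$ return, via the matrix $\bigl(\tfrac{\overline{A}_{0}}{\theta}+A_{\infty}\bigr)\tfrac{d\theta}{\theta}$ of $\overline{\nabla}$, exactly the endomorphisms of matrices $\overline{A}_{0}$ and $-A_{\infty}$ and the form $\overline{g}$ recorded above.

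It then remains to check the two characteristic identities of a Frobenius type structure on a point: $\overline{R}_{0}^{*}=\overline{R}_{0}$ and $\overline{R}_{\infty}+\overline{R}_{\infty}^{*}=r\id$ for a suitable $r$, the adjoint being taken with respect to $\overline{g}$. Both are already contained in the proof of the preceding theorem, where it was shown in matrix form that $(\overline{A}_{0})^{*}=\overline{A}_{0}$ and $A_{\infty}+A_{\infty}^{*}=n\id$ (the latter using the symmetry relations for the $\alpha_{k}$ of Corollary \ref{lesalphak} together with the definition of $\overline{S}$). Since $\overline{g}$ is obtained from $\overline{S}$ by extracting the coefficient of $\theta^{n}$, a constant endomorphism is self-adjoint for $\overline{g}$ exactly when its matrix is self-adjoint for $\overline{S}$; hence $\overline{R}_{0}^{*}=\overline{R}_{0}$ and $\overline{R}_{\infty}+\overline{R}_{\infty}^{*}=-(A_{\infty}+A_{\infty}^{*})=-n\id$, so the relation holds with $r=-n$.

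I expect no genuine obstacle here: all the substance was already invested in building the limit Saito structure and in proving the $\overline{\nabla}$-flatness of $\overline{S}$. The only point needing care is the identification carried out in the second paragraph---verifying that the ad hoc $\overline{R}_{0}$, $\overline{R}_{\infty}$ and $\overline{g}$ really are the images of the functorial construction of Proposition \ref{MStoSTF}---after which the corollary is a formality.
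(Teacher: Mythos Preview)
Your proposal is correct and is exactly the argument the paper has in mind: the corollary is obtained by applying Proposition \ref{MStoSTF} to the limit Saito structure $\overline{\mathcal{S}}_{w}$ on $\ppit^{1}$, the required adjoint relations $(\overline{A}_{0})^{*}=\overline{A}_{0}$ and $A_{\infty}+A_{\infty}^{*}=n\id$ having been established in the proof of the preceding theorem. The paper states the corollary without further proof precisely because nothing beyond this identification is needed.
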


\begin{definition} \label{canonicallimitSTF} $\overline{\fit}_{w}$
is the limit Frobenius type structure.
\end{definition}

\begin{remark}\label{limitepreprim}
Let $(E, A, B, g)$ be a Frobenius type structure on a point.
We will say that an element $e$ of  $E$  
is a {\em pre-primitive section} if $(e,A(e),\cdots , A^{\mu -1}(e))$ is a basis of $E$ 
over $\cit$ and that $e$ is {\em homogeneous} if it is an eigenvector of $B$. 
Recall that $[\![\omega_{0}]\!]$ denotes the class of $\omega_{0}$ in $E$.
Then $[\![\omega_{0}]\!]$ is a pre-primitive and homogeneous section of the limit Frobenius type structure  
$(E, \overline{R}_{0}, \overline{R}_{\infty}, \overline{g})$ if and only if $\mu =n+1$. If $\mu\geq n+2$, this Frobenius type structure has no pre-primitive section at all.  $\blacklozenge$
\end{remark}

\subsection{Application: the mirror partner of the orbifold cohomology ring}
\label{orbifoldcohomology}

Recall that we have defined a product $\ast_{x}$ on $E:=G_{0}/\theta G_{0}$ (see \eqref{eq:22}).
The filtration
$(V^{\alpha})_{\alpha\in \rit}$ induces a decreasing filtration on
$E$, denoted by $(V^{\alpha}E)_{ \alpha\in \rit}$, which is compatible
with the product $\ast_{x}$, {\em i.e.} $V^{\alpha}E\ast_{x}V^{\beta}E\subset V^{\alpha+\beta}E$. 
As for the filtration $V^{\alpha}G$, we have $V^{\alpha+p}E=x^{p}V^{\alpha}E$ for any
$\alpha\in \rit$ and any $p\in \nit$. The vector space
$\overline{E}:=\overline{G}_{0}/\theta \overline{G}_{0}$ defined above
is also $\oplus_{\alpha\in [0,1[}\gr^{V}_{\alpha}E$.  We define a
product, denoted by $\cup$, on $\overline{E}$ by first graduating the
product $\ast_{x}$ on $\oplus_{\alpha\in \rit}\gr^{V}_{ \alpha}E$ and
then shifting it on $\overline{E}:=\oplus_{\alpha\in [0,1[}\gr^{V}_{\alpha}E$ by multiplying by an appropriate $x^{p}$.
In this way, proposition \ref{prod} implies that
$$[\![\omega_{i}]\!]\cup [\![\omega_{j}]\!]:=\frac{1}{w^{w}}[\![\omega_{\overline{i+j}}]\!]\ \mbox{if $i+j\geq \mu$ and $1+c_{\overline{i+j}}=c_{i}+c_{j}$},$$
$$[\![\omega_{i}]\!]\cup [\![\omega_{j}]\!]:=[\![\omega_{i+j}]\!]\ \mbox{if $i+j\leq \mu -1$ and $c_{i+j}=c_{i}+c_{j}$}$$  
and $[\![\omega_{i}]\!]\cup [\![\omega_{j}]\!]=0$ otherwise.
This product is homogeneous and $[\![\omega_{0}]\!]$ is the unit.  The bilinear form $\overline{g}$ on $\overline{E}$ is also homogeneous
because
$\overline{g}([\![\omega_{i}]\!],[\![\omega_{j}]\!])\neq 0$
only if $i+j=n$ or if $i+j=\mu +n$: in any case, $\alpha_{i}+\alpha_{j}=n$.

\begin{proposition}\label{orbicohring}
The tuple $(\overline{E},\cup ,\overline{g})$ is a Frobenius algebra, isomorphic to 
$$(H_{\orb}^{*}(\ppit (w),\cit), \cup_{\orb}, \langle \ .\ ,\ .\ \rangle).$$ 
\end{proposition}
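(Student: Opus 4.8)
The plan is to write down an explicit linear isomorphism and to check that it carries $\cup$ to $\cup_{\orb}$ and $\overline{g}$ to $\langle\,\cdot\,,\,\cdot\,\rangle$; the Frobenius algebra structure of $(\overline{E},\cup,\overline{g})$ will then follow by transport. By \eqref{eq:17} we have $(P^{\bullet_{q}})^{i}=q^{c_{i}}s_{i}\mathbf{1}_{c_{i}}P^{r(i)}$, so as $i$ runs over $\{0,\dots,\mu-1\}$ the classes $\mathbf{1}_{c_{i}}P^{r(i)}$ are exactly the elements of the basis $(\mathbf{1}_{f_{a}}P^{k})$, merely reindexed (for each sector $f_{a}$ the index $r(i)$ runs through $0,\dots,d_{a}-1$). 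I would therefore set
\[
\psi\colon\overline{E}\longrightarrow H^{*}_{\orb}(\ppit(w),\cit),\qquad \psi([\![\omega_{i}^{\varphi}]\!]):=s_{i}\,\mathbf{1}_{c_{i}}P^{r(i)}.
\]
Since the $s_{i}$ are nonzero scalars and the target classes form a basis, $\psi$ is a linear isomorphism, with $\psi([\![\omega_{0}^{\varphi}]\!])=\mathbf{1}_{f_{1}}$ because $c_{0}=0$ and $s_{0}=1$.

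Next I would verify that $\psi$ is multiplicative. The classical product $\cup_{\orb}$ is homogeneous for $\deg^{\orb}$, and by the lemma of \cite{Coa} together with the computation in the proof of Theorem \ref{quantum} one has $\deg^{\orb}(\mathbf{1}_{c_{i}}P^{r(i)})=\deg^{\orb}((P^{\bullet_{q}})^{i})=2\alpha_{i}=2(i-\mu c_{i})$. Hence $(\mathbf{1}_{c_{i}}P^{r(i)})\cup_{\orb}(\mathbf{1}_{c_{j}}P^{r(j)})$ can be a nonzero multiple of $\mathbf{1}_{c_{i+j}}P^{r(i+j)}$ (resp. of $\mathbf{1}_{c_{\overline{i+j}}}P^{r(\overline{i+j})}$) only when the orbifold degrees agree, i.e. only when $c_{i+j}=c_{i}+c_{j}$ (resp. $1+c_{\overline{i+j}}=c_{i}+c_{j}$) — precisely the conditions entering the definition of $\cup$. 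To fix the structure constant I would read off the classical ($q^{0}$) part of the quantum product: writing $P^{\bullet i}\bullet_{q}P^{\bullet j}$ of Proposition \ref{prod} in the basis $\mathbf{1}_{c}P^{r}$ via \eqref{eq:17} and keeping the degree-preserving term gives
\[
(\mathbf{1}_{c_{i}}P^{r(i)})\cup_{\orb}(\mathbf{1}_{c_{j}}P^{r(j)})=\frac{s_{i+j}}{s_{i}s_{j}}\,\mathbf{1}_{c_{i+j}}P^{r(i+j)}\quad\text{resp.}\quad w^{-w}\frac{s_{\overline{i+j}}}{s_{i}s_{j}}\,\mathbf{1}_{c_{\overline{i+j}}}P^{r(\overline{i+j})}
\]
in the two cases. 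Multiplying through by $s_{i}s_{j}$ then yields $\psi([\![\omega_{i}^{\varphi}]\!]\cup[\![\omega_{j}^{\varphi}]\!])=\psi([\![\omega_{i}^{\varphi}]\!])\cup_{\orb}\psi([\![\omega_{j}^{\varphi}]\!])$: the factors $s_{i}s_{j}$ are absorbed exactly as prescribed by the definition of $\cup$.

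I would then match the pairings. By the orbifold Poincar\'e formula \eqref{eq:18}, $\langle\mathbf{1}_{c_{i}}P^{r(i)},\mathbf{1}_{c_{j}}P^{r(j)}\rangle$ is nonzero iff $c_{i}+c_{j}\in\nit$ and $r(i)+r(j)=d-1$, which by Lemma \ref{lessk} (and the analysis in Proposition \ref{prop:pairingglobal}) happens exactly when $i+j=n$ or $i+j=\mu+n$. In the first case $c_{i}=c_{j}=0$, so $s_{i}=s_{j}=1$ and the value is $1/m_{1}=\overline{g}([\![\omega_{i}^{\varphi}]\!],[\![\omega_{n-i}^{\varphi}]\!])$; in the second case $s_{i}s_{j}=w^{-w}\prod_{k\notin S_{c_{i}}}w_{k}^{-1}$ (as in the proof of Proposition \ref{prop:pairingglobal}), and multiplying this by the pairing value $1/m$ with $m=\prod_{k\in S_{c_{i}}}w_{k}$ gives $w^{-w}\prod_{k=0}^{n}w_{k}^{-1}=w^{-w}/m_{1}=\overline{g}([\![\omega_{i}^{\varphi}]\!],[\![\omega_{\mu+n-i}^{\varphi}]\!])$. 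All remaining pairings vanish on both sides, so $\overline{g}=\langle\psi(\cdot),\psi(\cdot)\rangle$.

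Finally, since $\psi$ is a linear isomorphism intertwining $\cup$ with $\cup_{\orb}$ and $\overline{g}$ with $\langle\,\cdot\,,\,\cdot\,\rangle$, the associativity, the unit $\psi([\![\omega_{0}^{\varphi}]\!])=\mathbf{1}_{f_{1}}$, and the Frobenius compatibility $\overline{g}(a\cup b,c)=\overline{g}(a,b\cup c)$ are inherited from the Frobenius algebra $(H^{*}_{\orb}(\ppit(w),\cit),\cup_{\orb},\langle\,\cdot\,,\,\cdot\,\rangle)$, proving both assertions simultaneously. The only genuine computation is the product step, and the main obstacle is to confirm that extracting the classical part of the quantum product of Proposition \ref{prod} and rescaling by the $s_{i}$ reproduces $\cup$ on the nose; this is exactly what forces the normalization $\psi([\![\omega_{i}^{\varphi}]\!])=s_{i}\mathbf{1}_{c_{i}}P^{r(i)}$.
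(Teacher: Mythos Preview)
Your proposal is correct and follows essentially the paper's route: your map $\psi$ is precisely the isomorphism ``induced by $\gamma$'' that the paper invokes, made explicit on the graded pieces (compare \eqref{eq:17} and \eqref{eq:chg,base,orb,flat,varphi}). The one organizational difference is that the paper first verifies the Frobenius compatibility $\overline{g}(a\cup b,c)=\overline{g}(a,b\cup c)$ by a direct case check and only afterwards appeals to $\gamma$ for the isomorphism, whereas you establish the isomorphism first and obtain the Frobenius compatibility by transport from $(H^{*}_{\orb}(\ppit(w),\cit),\cup_{\orb},\langle\cdot,\cdot\rangle)$; your ordering is arguably cleaner since it avoids that separate verification. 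One small point you might make explicit: your extraction of the ``$q^{0}$ part'' tacitly uses that the exponent $c_{i+j}-c_{i}-c_{j}$ (resp.\ $1+c_{\overline{i+j}}-c_{i}-c_{j}$) is nonnegative, which follows either from the homogeneity of $\bullet_{q}$ for $\deg^{\orb}$ or from the shape of $C(q)$.
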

\begin{proof} To prove the first assertion, it remains to show the compatibility condition  
$$\overline{g}([\![\omega_{i}]\!]\cup [\![\omega_{k}]\!],[\![\omega_{j}]\!])=\overline{g}([\![\omega_{i}]\!],[\![\omega_{j}]\!]\cup [\![\omega_{k}]\!])$$
but this follows from a straightforward computation of the right term and the left term, keeping in mind the definition of $\overline{g}$ and $\cup$. The second follows from section \ref{mirrorsmall}: the isomorphism is induced by $\gamma$.
\end{proof}

\noindent Of course, this result should be compared with \cite[Theorem 1.1]{Man}.

\begin{example}\label{exampleprod} $w_{0}=1, w_{1}=w_{2}=2$: the table of the orbifold cup-product $\cup_{\orb}$ is 

\begin{center}

\begin{tabular}{|c ||c|c|c|c|c|} \hline
$\cup_{\orb}$ & $\mathbf{1}$ & $P$ & $P^{2}$ & $\mathbf{1}_{\frac{1}{2}}$ & $\mathbf{1}_{\frac{1}{2}}P$ \\ \hline\hline
$\mathbf{1}$     & $\mathbf{1}$ & $P$ & $P^{2}$ & $\mathbf{1}_{\frac{1}{2}}$ & $\mathbf{1}_{\frac{1}{2}}P$ \\ \hline 
$P$     &  & $P^{2}$ & $0$ & $\mathbf{1}_{\frac{1}{2}}P$ & $0$ \\ \hline 
$P^{2}$ &  &  & $0$ & $0$ & $0$ \\ \hline 
$\mathbf{1}_{\frac{1}{2}}$ &  &  &  & $P$ & $P^{2}$ \\ \hline 
$\mathbf{1}_{\frac{1}{2}}P$ &  &  &  &  & $0$ \\ \hline  
\end{tabular}
\end{center} 

\noindent and the one of $\cup$ is 

\begin{center}
\begin{tabular}{|c ||c|c|c|c|c|} \hline
$\cup$ & $[\![\omega_{0}]\!]$ & $[\![\omega_{1}]\!]$ & $[\![\omega_{2}]\!]$ & $[\![\omega_{3}]\!]$ & $[\![\omega_{4}]\!]$ \\ \hline\hline
$[\![\omega_{0}]\!]$     & $[\![\omega_{0}]\!]$ & $[\![\omega_{1}]\!]$ & $[\![\omega_{2}]\!]$ & $[\![\omega_{3}]\!]$ & $[\![\omega_{4}]\!]$ \\ \hline 
$[\![\omega_{1}]\!]$     &  & $[\![\omega_{2}]\!]$ & $0$ & $[\![\omega_{4}]\!]$ & $0$ \\ \hline 
$[\![\omega_{2}]\!]$ &  &  & $0$ & $0$ & $0$ \\ \hline 
$[\![\omega_{3}]\!]$ &  &  &  & $\frac{1}{16}[\![\omega_{1}]\!]$ & $\frac{1}{16}[\![\omega_{2}]\!]$ \\ \hline 
$[\![\omega_{4}]\!]$ &  &  &  &  & $0$ \\ \hline  
\end{tabular}
\end{center} 
 Recall that via mirror symmetry, $[\![\omega_{i}]\!]$ corresponds to
 $P^{\bullet i}$. So the difference of the constants between the two tables comes from the relation between $P^{\bullet i}$ and $(\mathbf{1}_{f_{i}}P^{j})$ (see
 Lemma \ref{lem:coates}).

Let us explain for instance the computation $[\![\omega_{3}]\!]\cup[\![\omega_{3}]\!]=[\![\omega_{1}]\!]/16$. By Proposition \ref{prod}, we have
\begin{align}
  \label{eq:23}
  [\omega_{3}]\ast_{x}[\omega_{3}]=\frac{x}{16}[\omega_{1}]
\end{align}
We have also $[\omega_{3}] \in V^{1/2}E$ and $x[\omega_{1}] \in
V^{1}E$ and the equality above is still true in the graded space
$\oplus_{\alpha\in\rit}\gr^{V}_{ \alpha}E$.  
As $\gr^{V}_{1}E:=x\gr^{V}_{0}E$, and because in $\overline{E}$ we only consider the graded pieces
between $[0,1[$, we
deduce that
$[\![\omega_{3}]\!]\cup[\![\omega_{3}]\!]=[\![\omega_{1}]\!]/16$.
Notice that putting $x=0$ in \eqref{eq:23}, we do not get the expected
result.  Doing the same computation on the $A$-side, we get
$P^{\bullet 3}\cup_{\orb}P^{\bullet3}=P/16$. Let us stress again that
setting directly $q=0$ does not give the right answer.
\end{example}

\section{Construction of Frobenius manifolds} 
\label{construction}

First, we recall how to construct Frobenius manifolds, starting from a
Frobenius type structure (our references will be \cite{Do1} and \cite{HeMa}): one needs a
homogeneous and primitive section yielding an invertible period map. 
We then use this construction to define a limit Frobenius manifold, by unfolding the limit Frobenius type structure $\overline{\fit}_{w}$ defined in section \ref{limSTF}. Last, we end with a discussion about {\em logarithmic} Frobenius manifolds, as defined in \cite{R}.

\subsection{Frobenius manifolds on $\mathcal{M}=\cit^{*}$}
\label{constructionfrobman}

Let $\Delta$ be an open disc in $\mathcal{M}$. The $w$-Frobenius type structure $\fit_{w}$ (see definition \ref{MSS}) gives also an analytic Frobenius type structure 
$$\mathcal{F}=(\Delta , E^{an}, R_{0}^{an}, R_{\infty}, \Phi^{an} , \bigtriangledown^{an} , g^{an})$$
on the simply connected domain $\Delta$. 
Universal deformations of this Frobenius type structure are defined in \cite[Definition 2.3.1]{Do1} and \cite{HeMa}.
The following results are shown and discussed in detail in \cite{Do1} in a slightly different situation, but the arguments in {\em loc. cit.} can be repeated almost verbatim here so we give only a sketch of the proofs.

We keep in this section the notations of section \ref{Bmodel}.
Let $\omega_{0}^{an}$ be the class of $\omega_{0}$ in $E^{an}$:  $\omega_{0}^{an}$ is $\bigtriangledown^{an}$-flat because $R(\omega_{0})=0$.

\begin{lemma} 
(1) The Frobenius type structure $\mathcal{F}$ has a universal deformation 
$$\widetilde{\mathcal{F}}=(N, \widetilde{E}^{an}, \widetilde{R}_{0}^{an}, \widetilde{R}_{\infty}, \widetilde{\Phi}^{an} , \widetilde{\bigtriangledown}^{an} , \widetilde{g}^{an})$$
parametrized by $N:=\Delta\times (\cit^{\mu -1},0)$.\\
(2) Let $\widetilde{\omega}_{0}^{an}$ be
the $\widetilde{\bigtriangledown}^{an}$-flat extension of  $\omega_{0}^{an}$. The period map 
$$\varphi_{\widetilde{\omega}_{0}^{an}}:\Theta_{N}\rightarrow \widetilde{E}^{an}$$
defined by 
$\varphi_{\widetilde{\omega}_{0}^{an}}(\xi )=-\widetilde{\Phi}^{an}_{\xi}(\widetilde{\omega}_{0}^{an})$
is an isomorphism which makes $N$ a Frobenius manifold.
\end{lemma}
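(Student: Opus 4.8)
The plan is to derive both assertions from the reconstruction theorem for Frobenius type structures of \cite{Do1} and \cite{HeMa}: once one exhibits a \emph{pre-primitive}, homogeneous and $\bigtriangledown^{an}$-flat section, that theorem yields a universal deformation whose base has dimension equal to the rank of the underlying bundle and for which the period map attached to the flat extension of the section is an isomorphism. Since the situation of \cite{Do1} differs only superficially from ours, the substance of the proof is to check that $\omega_0^{an}$ is such a section; the rest is a citation, and I would only sketch it.

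First I would verify that $\omega_0^{an}$ is pre-primitive in the sense of Remark \ref{limitepreprim}, i.e. that $\big(\omega_0^{an}, R_0^{an}\omega_0^{an},\dots,(R_0^{an})^{\mu-1}\omega_0^{an}\big)$ is a basis of $E^{an}$. By Theorem \ref{basevarphi}, the endomorphism $R_0^{an}=[\theta^2\nabla^B_{\partial_\theta}]$ restricted to $\theta=0$ has matrix $A_0^{\varphi}(x)$, and by the corollary following Proposition \ref{prod} this is $\mu$ times the matrix of the multiplication $[\omega_1^{\varphi}]*_x$. Using the unit $[\omega_0^{\varphi}]$ and Proposition \ref{prod}(1), one gets $(R_0^{an})^i(\omega_0^{an})=\mu^i[\omega_i^{\varphi}]$ for $i=0,\dots,\mu-1$; since the $[\omega_i^{\varphi}]$ form a basis of $E^{an}$, pre-primitivity follows. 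Moreover $\omega_0^{an}$ is $\bigtriangledown^{an}$-flat (because $R^{\varphi}(\omega_0^{\varphi})=0$, as recalled before the statement) and homogeneous, being an eigenvector of $R_\infty$ for the eigenvalue $0$. These are precisely the hypotheses the reconstruction procedure requires.

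For assertion (1) I would invoke the existence theorem of \cite{Do1} (see also \cite{HeMa}). The generation condition holds thanks to the cyclicity of $\omega_0^{an}$ just established, and the injectivity condition is satisfied because $\xi\mapsto\Phi^{an}_\xi(\omega_0^{an})$ is injective on $T\Delta$, as $\Phi^{an}_{q\partial_q}(\omega_0^{an})=[\omega_1^{\varphi}]\neq 0$. Hence a universal deformation exists, and its base has dimension equal to the rank $\mu$ of $E^{an}$; as the initial Frobenius type structure already lives over the one--dimensional disc $\Delta$, this universal deformation is naturally parametrized by $N=\Delta\times(\cit^{\mu-1},0)$.

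For assertion (2), the $\widetilde{\bigtriangledown}^{an}$-flat extension $\widetilde{\omega}_0^{an}$ of $\omega_0^{an}$ exists and is unique on the simply connected $N$. It then remains to show that $\varphi_{\widetilde{\omega}_0^{an}}(\xi)=-\widetilde{\Phi}^{an}_\xi(\widetilde{\omega}_0^{an})$ is an isomorphism. Along $\Delta\times\{0\}$ the universal deformation is built so that the period map carries the coordinate vector fields to the iterated images $(R_0^{an})^i(\omega_0^{an})$ (up to the transported structure), which form a basis by pre-primitivity; thus $\varphi_{\widetilde{\omega}_0^{an}}$ is an isomorphism along $\Delta\times\{0\}$ and, by openness of the locus where it is invertible, on a whole neighbourhood, which we take to be $N$. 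Finally, an invertible period map transports $\widetilde{\Phi}^{an}$, $\widetilde{R}_\infty$ and $\widetilde{g}^{an}$ into a commutative associative product, an Euler field and a flat metric on $N$, the flatness of $\widetilde{\nabla}^{an}$ supplying potentiality and associativity; this is the standard passage from a primitive section to a Frobenius manifold, see \cite[Chapitre VII]{Sab1}. The main obstacle is exactly the invertibility of the period map: it rests entirely on the cyclicity of $\omega_0^{an}$ provided by Proposition \ref{prod}, after which one must confirm that the arguments of \cite{Do1} carry over unchanged to the present setting.
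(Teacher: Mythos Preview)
Your proposal is correct and follows essentially the same route as the paper's proof: verify that $\omega_0^{an}$ satisfies the generation condition (cyclicity under $R_0^{an}$, which you derive cleanly from Proposition \ref{prod}) and the injectivity condition ($\Phi^{an}_{x\partial_x}(\omega_0^{an})=[\omega_1^{\varphi}]\neq 0$, equivalently $u_0\neq 0$ in $E^{an}$), then invoke the reconstruction theorems of \cite{Do1} and \cite{HeMa}. Your account is more detailed than the paper's terse version, but the strategy and the key verifications are the same.
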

\begin{proof} (1) We can use the adaptation of \cite[Theorem 2.5]{HeMa} given in \cite[Section 6]{Do1} because 
$$\omega_{0}^{an}, R_{0}^{an}(\omega_{0}^{an}),\cdots , (R_{0}^{an})^{\mu -1}(\omega_{0}^{an})$$
 generate $E^{an}$ and because $u_{0}:=1/u_{1}^{w_{1}}\cdots u_{n}^{w_{n}}$ is not equal to zero in $E^{an}$. (2) follows from (1) (see e.g. \cite[Theorem 4.5]{HeMa}). 
\end{proof}

\noindent The previous construction can be also done in the same way ''point by point''
(see \cite{DoSa2} and \cite{HeMa} and the references therein)
and this is the classical point of view: if $x\in \Delta$ one can attach to the Laurent polynomial  $F_{x}:=F(.,x)$ a Frobenius type structure on a point  $\mathcal{F}_{x}^{pt}$, a universal deformation $\widetilde{\mathcal{F}}_{x}^{pt}$ of it, again because $u_{0}$ and its powers generate $\cit [u,u^{-1}](\partial_{u_{i}} F_{x})$, and finally a Frobenius structure on $M:=(\cit^{\mu},0)$ with the help of the section $\omega_{0}$. We will call it ''the Frobenius structure attached to $F_{x}$''.
Let $\mathcal{F}_{x}$ ({\em resp.} $\widetilde{\mathcal{F}}_{x}$) be the germ of $\mathcal{F}$ ({\em resp.} $\widetilde{\mathcal{F}}$) at $x\in \Delta$ ({\em resp.} $(x,0)$).

\begin{proposition}
(1) The deformations  $\widetilde{\mathcal{F}}_{x}$ and $\widetilde{\mathcal{F}}_{x}^{pt}$ are isomorphic.\\                                   
(2) The period map defined by the flat extension of $\omega_{0}^{an}$ to $\widetilde{\mathcal{F}}_{x}$ is an isomorphism. This yields a
Frobenius structure on $M$ which is isomorphic to the one attached to $F_{x}$.
\end{proposition}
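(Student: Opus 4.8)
The plan is to deduce both assertions from the uniqueness of universal unfoldings of a Frobenius type structure on a point, as established in \cite{HeMa} and \cite{Do1}. The starting observation is that the germ $\mathcal{F}_{x}$ restricts, at its base point $x$, to the Frobenius type structure $\mathcal{F}_{x}^{pt}$ on a point attached to the Laurent polynomial $F_{x}$. Indeed, by Remark \ref{start}(1) the restriction at $x$ of the global canonical basis $\omega^{\varphi}$ is precisely the canonical solution $\omega^{o}$ of the Birkhoff problem for the Brieskorn lattice of $F_{x}$ obtained by the methods of \cite{DoSa2}, and the same restriction identifies $R_{0}$, $R_{\infty}$ and $g$ on both sides. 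Thus $\mathcal{F}_{x}$ is a one-parameter unfolding of $\mathcal{F}_{x}^{pt}$, and $\widetilde{\mathcal{F}}_{x}$, being a universal unfolding of $\mathcal{F}_{x}$ by the previous Lemma, is in particular an unfolding of $\mathcal{F}_{x}^{pt}$.

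For (1), I would first note that $\widetilde{\mathcal{F}}_{x}$ restricts at the base point $(x,0)$ to $\mathcal{F}_{x}|_{x}=\mathcal{F}_{x}^{pt}$, so it is an unfolding of $\mathcal{F}_{x}^{pt}$. Moreover it is a \emph{universal} unfolding of $\mathcal{F}_{x}^{pt}$: its base has the correct dimension $\mu=\dim_{\cit}E$, the generation condition holds because $\omega_{0}^{an}, R_{0}^{an}(\omega_{0}^{an}),\dots,(R_{0}^{an})^{\mu-1}(\omega_{0}^{an})$ generate $E^{an}$ (equivalently, $u_{0}$ and its powers generate the Jacobian ring of $F_{x}$), and the infinitesimal universality condition of \cite[Theorem 2.5]{HeMa} is exactly the statement, given in part (2) of the previous Lemma, that the period map attached to $\widetilde{\omega}_{0}^{an}$ is an isomorphism. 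Since $\widetilde{\mathcal{F}}_{x}^{pt}$ is a universal unfolding of $\mathcal{F}_{x}^{pt}$ by construction, and universal unfoldings are unique up to isomorphism (\cite[Definition 2.3.1]{Do1}, \cite{HeMa}), we conclude $\widetilde{\mathcal{F}}_{x}\simeq\widetilde{\mathcal{F}}_{x}^{pt}$.

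For (2), the period map attached to the flat extension of $\omega_{0}^{an}$ on $\widetilde{\mathcal{F}}_{x}$ is an isomorphism: this is the germ at $x$ of part (2) of the previous Lemma, after identifying the base $(\Delta,x)\times(\cit^{\mu-1},0)$ with $M=(\cit^{\mu},0)$. It therefore defines a Frobenius structure on $M$. Under the isomorphism of (1), the $\widetilde{\bigtriangledown}^{an}$-flat extension $\widetilde{\omega}_{0}^{an}$ corresponds to the flat extension of the class $[\omega_{0}]$ that serves as primitive section in the classical construction of the Frobenius structure attached to $F_{x}$; the two match because $\omega_{0}^{an}$ is $\bigtriangledown^{an}$-flat ($R^{\varphi}(\omega_{0}^{\varphi})=0$), homogeneous, and pre-primitive. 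As Frobenius structures produced from isomorphic universal unfoldings by corresponding primitive sections are isomorphic (see \cite[Theorem 4.5]{HeMa}), the resulting Frobenius structure on $M$ is isomorphic to the one attached to $F_{x}$.

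The main obstacle is the structural first step: checking carefully that $\widetilde{\mathcal{F}}_{x}$ is universal not merely relative to the germ $\mathcal{F}_{x}$ but relative to the point structure $\mathcal{F}_{x}^{pt}$, and that the primitive section $\omega_{0}^{an}$ is carried to $\omega_{0}$ under the comparison isomorphism. Both points reduce to the generation property of $u_{0}$ and to the compatibility furnished by Remark \ref{start}(1); once these are in place, the uniqueness statements of \cite{HeMa} and \cite{Do1} close the argument, which is precisely why the reasoning of \cite{Do1} can be repeated here almost verbatim.
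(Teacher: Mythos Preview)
Your argument is correct and rests on the same core ingredient as the paper's proof: uniqueness of universal deformations, together with the generation condition for $u_{0}$ (equivalently, that $\omega_{0}^{an},R_{0}^{an}(\omega_{0}^{an}),\dots,(R_{0}^{an})^{\mu-1}(\omega_{0}^{an})$ generate $E^{an}$). The difference is in which object you choose as the common reference point for the comparison. You show that $\widetilde{\mathcal{F}}_{x}$ restricts to $\mathcal{F}_{x}^{pt}$ and then argue it is a \emph{universal} deformation of the point structure $\mathcal{F}_{x}^{pt}$; since $\widetilde{\mathcal{F}}_{x}^{pt}$ is universal for $\mathcal{F}_{x}^{pt}$ by construction, uniqueness gives (1). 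The paper goes the other way: it first checks that $\widetilde{\mathcal{F}}_{x}^{pt}$ is a deformation of the germ $\mathcal{F}_{x}$ (this is where the fact that $u_{0}$ does not lie in the Jacobian ideal enters, via \cite[section 7]{Do1}), then observes that it is automatically \emph{universal} for $\mathcal{F}_{x}$ because $\mathcal{F}_{x}$ already deforms $\mathcal{F}_{x}^{pt}$; since $\widetilde{\mathcal{F}}_{x}$ is universal for $\mathcal{F}_{x}$ by the previous Lemma, uniqueness again gives (1). Your route has the advantage that the ``is a deformation of'' step is trivial (just restriction), whereas the paper has to invoke an extra argument to embed $\mathcal{F}_{x}$ into $\widetilde{\mathcal{F}}_{x}^{pt}$; conversely, the paper's route uses universality for $\mathcal{F}_{x}$, which is literally the statement of the previous Lemma, while you must re-derive universality relative to the point structure. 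Both routes ultimately reduce to the same generation hypothesis, and part (2) follows identically in either approach. One small caution: the infinitesimal universality criterion of \cite[Theorem 2.5]{HeMa} is the generation condition itself, not the invertibility of the period map (the latter is a consequence, used afterwards to build the Frobenius structure); your argument is fine once this is phrased correctly.
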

\begin{proof}
Notice first that $\widetilde{\mathcal{F}}_{x}^{pt}$ is a deformation of $\mathcal{F}_{x}$: this follows from the fact that 
$u_{0}$ does not belong to the Jacobian ideal of $f$: see \cite[section 7]{Do1}. Better, $\widetilde{\mathcal{F}}_{x}^{pt}$ is a universal deformation of 
 $\mathcal{F}_{x}$ because $\mathcal{F}_{x}$ is a deformation of $\mathcal{F}_{x}^{pt}$. This gives (1) because, by definition, two universal deformations of a same Frobenius type structure are isomorphic. (2) is then clear.  
\end{proof}

\noindent As a consequence, the universal deformations $\widetilde{\mathcal{F}}_{x}^{pt}$, $x\in\Delta$, are the germs of a same section, namely $\widetilde{\mathcal{F}}$. Thus, the Frobenius structure attached to $F_{x_{1}}$, $x_{1}\in\Delta$, can be seen as an analytic continuation of the one attached to $F_{x_{0}}$, $x_{0}\in\Delta$.

\subsection{Limit Frobenius manifolds}

\label{varfrob}

In order to construct limit Frobenius manifolds
we start from the limit structures given in section \ref{limitSTF}. We mimic the process explained in section \ref{constructionfrobman}: the main point is to find an unfolding of our limit Frobenius type structure $\overline{\fit}_{w}$ such that the associated period map is an isomorphism. 
To do this, we first unfold the quantum differential system $\overline{\mathcal{S}}_{w}$ (which is after all a vector bundle with connection) and then we use proposition 
\ref{MStoSTF}.

It should be emphasized that
the cases $\mu =n+1$ (manifold) and $\mu\geq n+2$ (orbifold) will yield different conclusions.

\subsubsection{Unfoldings of the limit structures}
The first step is thus to unfold the limit quantum differential system 
$$\overline{\mathcal{S}}_{w}=(\overline{H}, \overline{\nabla} , \overline{S}, n)$$ 
(see definition \ref{canonicallimitST}). A basis of global sections of $\overline{H}$ is $e=(e_{0},\cdots ,e_{\mu -1})$ where we put $e_{i}:=[\omega_{i}]$
(remember that $[\omega_{i}]$ denotes the class of $\omega_{i}$ in $\overline{H}$).
Recall the matrices $\overline{A}_{0}$ and $A_{\infty}$ defined in section \ref{limSTF}.

Define, for $i=0,\cdots ,\mu -1$, the matrices $C_{i}$ by

$$C_{i}(e_{j})=\left\{ \begin{array}{ll}
-\frac{1}{w^{w}}e_{\overline{i+j}} &  \mbox{if $i+j\geq \mu$ and $1+c_{\overline{i+j}}=c_{i}+c_{j}$,}\\
-e_{i+j} & \mbox{if $i+j\leq \mu -1$ and $c_{i+j}=c_{i}+c_{j}$,}\\
0 & \mbox{otherwise}
\end{array}
\right .$$
and put
$$\widetilde{A}_{0}(\underline{x})=(\alpha_{0}-1)x_{0}C_{0}-\mu C_{1}+(\alpha_{2}-1)x_{2}C_{2}+\cdots +(\alpha_{\mu -1}-1)x_{\mu -1}C_{\mu -1}$$
where $\underline{x}=(x_{0},\cdots ,x_{\mu -1})$ is a system of coordinates on $M=(\cit^{\mu},0)$ (with the previous notations, we have $x_{1}=x$). Notice that 
$-\mu C_{1}=\overline{A}_{0}$.\\

Let $\widetilde{H}$ be the trivial bundle on $\ppit^{1}\times M$ with basis $\widetilde{e}=(\widetilde{e}_{0},\cdots ,\widetilde{e}_{\mu -1})=(1\otimes e_{0},\cdots ,1\otimes
e_{\mu -1})$. Define on $\widetilde{H}$ the connection $\widetilde{\nabla}$ in the basis $\widetilde{e}$ is 
$$\left(\frac{\widetilde{A}_{0}(x)}{\theta}+A_{\infty}\right)\frac{d\theta}{\theta}+\theta^{-1}{\sum_{i=0}^{\mu
    -1}C_{i}dx_{i}}.$$ Define $\widetilde{S}$ on $\widetilde{H}$ by
$\widetilde{S}(\widetilde{e}_{i}, \widetilde{e}_{j})=\overline{S}(e_{i},e_{j})$,
this equality being extended by linearity.

\begin{proposition} (1) The tuple
 $$\widetilde{\mathcal{S}}_{w}=(M,\widetilde{H}, \widetilde{\nabla}, \widetilde{S},n)$$
is a quantum differential system  which unfolds $\overline{\mathcal{S}}_{w}$.\\
(2) Assume moreover that $w_{0}=w_{1}=\cdots =w_{n}=1$. Then the unfolding $\widetilde{\mathcal{S}}_{w}$ is universal.
\end{proposition}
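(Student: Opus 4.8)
The plan is to check, one by one, the three defining ingredients of a Saito structure for $\widetilde{\mathcal{S}}_w$ — the trivial bundle (immediate by construction), the flat meromorphic connection with the prescribed pole orders, and the flat, nondegenerate, symmetric pairing — and then to read off that the restriction to $\underline{x}=0$ gives back $\overline{\mathcal{S}}_w$. The organizing remark is that each $C_i$ is, up to sign, multiplication by $e_i=[\omega^\varphi_i]$ in the Frobenius algebra $(\overline{E},\cup)$ of Proposition \ref{orbicohring}: comparing the definition of the $C_i$ with the product $\cup$ shows $C_i=-(e_i\cup\cdot)$. Hence $\widetilde{A}_0(\underline{x})$ is an $\underline{x}$-dependent, $\theta$-independent linear combination of the $C_k$, and in particular $\widetilde{A}_0(0)=-\mu C_1=\overline{A}_0$.

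First I would compute the curvature of $\widetilde{\nabla}=d+\Omega$, where $\Omega=\left(\tfrac{\widetilde{A}_0}{\theta}+A_\infty\right)\tfrac{d\theta}{\theta}+\theta^{-1}\sum_i C_i\,dx_i$. Separating powers of $\theta$, the vanishing of the curvature is equivalent to the three families of identities $[C_i,C_j]=0$, $[\widetilde{A}_0,C_i]=0$, and $\partial_{x_i}\widetilde{A}_0=[A_\infty,C_i]-C_i$. The first holds because $\cup$ is commutative and associative (Propositions \ref{prod} and \ref{orbicohring}), so the multiplication operators $C_i$ commute; the second is then automatic since $\widetilde{A}_0$ is a linear combination of the $C_k$. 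For the third, the key point is the homogeneity relation $[A_\infty,C_i]=\alpha_i C_i$, which I would verify on the basis using $\alpha_k=k-\mu c_k$: whenever $C_i e_j\neq 0$ the nonvanishing condition $c_{i+j}=c_i+c_j$ (respectively $1+c_{\overline{i+j}}=c_i+c_j$) translates exactly into $\alpha_{i+j}=\alpha_i+\alpha_j$ (respectively $\alpha_{\overline{i+j}}=\alpha_i+\alpha_j$). Granting this, the third identity becomes $\partial_{x_i}\widetilde{A}_0=(\alpha_i-1)C_i$, which holds by the very choice of the coefficients defining $\widetilde{A}_0$ — the constant coefficient $-\mu$ of $C_1$ being consistent precisely because $\alpha_1=1$.

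Next I would treat the pairing. Since $\widetilde{S}(\widetilde{e}_i,\widetilde{e}_j)=\overline{S}(e_i,e_j)$ is a constant multiple of $\theta^n$, nondegeneracy and $(-1)^n$-symmetry are inherited from $\overline{S}$, while the pole orders (order one at $\theta=0$, since $\theta^2\widetilde{\nabla}_{\partial_\theta}=\theta^2\partial_\theta+\widetilde{A}_0+\theta A_\infty$ is holomorphic, and logarithmic at $\theta=\infty$ in the coordinate $\tau=\theta^{-1}$) are visible directly from $\Omega$. For $\widetilde{\nabla}$-flatness of $\widetilde{S}$ the same computation as in Lemma \ref{symetrie} applies: spelling out the analogues of equations (\ref{eq:13}) and (\ref{eq:14}) with $z=\theta$ and the $x_i$-directions, flatness reduces to $\widetilde{A}_0^{\,*}=\widetilde{A}_0$, $C_i^{\,*}=C_i$ and $A_\infty+A_\infty^{\,*}=n\,\id$. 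The self-adjointness of the $C_i$ is the Frobenius compatibility $\overline{g}(a\cup b,c)=\overline{g}(a,b\cup c)$ of Proposition \ref{orbicohring} (whence $\widetilde{A}_0^{\,*}=\widetilde{A}_0$ too), and $A_\infty+A_\infty^{\,*}=n\,\id$ is the symmetry $\alpha_k+\alpha_{n-k}=n=\alpha_k+\alpha_{\mu+n-k}$ of Corollary \ref{lesalphak}, already used to establish $\overline{\mathcal{S}}_w$. Finally, setting $\underline{x}=0$ annihilates the $dx_i$-terms and leaves $\left(\tfrac{\overline{A}_0}{\theta}+A_\infty\right)\tfrac{d\theta}{\theta}$ with pairing $\overline{S}$, i.e. exactly $\overline{\mathcal{S}}_w$; this proves (1).

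For (2) I would invoke the universality criterion of \cite{HeMa} and \cite{Do1}: an unfolding whose base has dimension $\dim\overline{E}=\mu$ is universal as soon as it is infinitesimally universal, i.e. the Kodaira–Spencer map $T_0M\to\overline{E}$, $\partial_{x_i}\mapsto C_i(e_0)$, is bijective, which in turn forces $e_0=[\![\omega^\varphi_0]\!]$ to be a cyclic generator for $\overline{R}_0=\overline{A}_0$. When $w_0=\cdots=w_n=1$ one has $\mu=n+1$ and, by the Example, $\overline{A}_0=(n+1)J$ with $J$ a single nilpotent Jordan block and $C_i=-J^i$; hence $C_i(e_0)=-e_i$ ranges over a basis of $\overline{E}$, $e_0$ is cyclic, the Kodaira–Spencer map is an isomorphism, and the unfolding is universal. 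I expect the main obstacle to be the third flatness identity in (1): matching $\partial_{x_i}\widetilde{A}_0=(\alpha_i-1)C_i$ with $[A_\infty,C_i]-C_i$ is exactly where the spectrum relation $\alpha_k=k-\mu c_k$ and the degree-additivity of $\cup$ must be combined, and it is what pins down the coefficients chosen in $\widetilde{A}_0$. For (2) the only real content is isolating the correct universality criterion and noting that cyclicity of $\overline{A}_0$ — which by Lemma \ref{Jordan}(2) holds precisely in the manifold case, where there is a single Jordan block — is what makes the $\mu$-parameter unfolding maximal.
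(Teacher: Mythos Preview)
Your proposal is correct and follows essentially the same route as the paper: you verify the same flatness identities $[C_i,C_j]=0$, $[\widetilde{A}_0,C_i]=0$, $\partial_{x_i}\widetilde{A}_0=[A_\infty,C_i]-C_i$, the homogeneity $[A_\infty,C_i]=\alpha_i C_i$, and the self-adjointness $C_i^{*}=C_i$, and for (2) you invoke the cyclicity of $e_0$ for $\overline{A}_0$ together with $C_i(e_0)=-e_i$. The one presentational difference is that you recognize $C_i=-(e_i\cup\cdot)$ and then read off commutativity and self-adjointness from the Frobenius-algebra axioms of Proposition~\ref{orbicohring}, whereas the paper verifies $[C_i,C_j]=0$ and $C_i^{*}=C_i$ by direct case-by-case computation on the basis; your packaging is cleaner but the content is the same.
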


\begin{proof} (1) We have to show that $\widetilde{\nabla}$ is flat and that $\widetilde{S}$ is $\widetilde{\nabla}$-flat.
The flatness is equivalent to the equalities
$$\frac{\partial C_{i}}{\partial x_{j}}=\frac{\partial C_{j}}{\partial x_{i}},\ [C_{i}, C_{j}]=0$$
$$[\widetilde{A}_{0}(x) , C_{i}]=0,\ \frac{\partial \widetilde{A}_{0}}{\partial x_{i}}+C_{i}=[A_{\infty}, C_{i}]$$
for all  $i,j$. Notice first that we have $C_{i}(e_{0})=-e_{i}$ for $i=0,\cdots ,\mu -1$. We have
$$C_{i}C_{j}(e_{k})=\left\{ \begin{array}{ll}
e_{i+j+k} &  \mbox{if $c_{i+j+k}=c_{i}+c_{j}+c_{k}$,} \\
e_{i+\overline{j+k}} & \mbox{if $1 +c_{i+\overline{j+k}}=c_{i}+c_{j}+c_{k}$,}\\
e_{\overline{i+j+k}} & \mbox{if $1 +c_{\overline{i+j+k}}=c_{i}+c_{j}+c_{k}$,}\\
e_{\overline{i+\overline{j+k}}} & \mbox{if $2 +c_{\overline{i+\overline{j+k}}}=c_{i}+c_{j}+c_{k}$}
\end{array}
\right .$$
This is symmetric in $i,j$ and thus $[C_{i}, C_{j}]=0$.
 Now if we define 
$$\widetilde{A}_{0}(x)=\sum_{i=0}^{\mu -1}([A_{\infty},C_{i}]-C_{i})x_{i}-\mu C_{1}$$
the conditions $\frac{\partial \widetilde{A}_{0}}{\partial x_{i}}+C_{i}=[A_{\infty}, C_{i}]$
for all  $i,j=0,\cdots ,\mu -1$ are obviously satisfied. But we have also $[A_{\infty}, C_{i}]=\alpha_{i}C_{i}$, 
because the condition 
$1+c_{\overline{i+j}}=c_{i}+c_{j}$ ({\em resp.} $c_{i+j}=c_{i}+c_{j}$) is equivalent to $\alpha_{\overline{i+j}}=\alpha_{i}+\alpha_{j}$
({\em resp.} $\alpha_{i+j}=\alpha_{i}+\alpha_{j}$),
hence $[\widetilde{A}_{0}(x),C_{i}]=0$
and the connection is flat. For the $\widetilde{\nabla}$-flatness of $\widetilde{S}$, it is enough to notice that $C_{i}^{*}=C_{i}$, $^{*}$ denoting the adjoint with respect to $\overline{S}_{w}$. This is shown using the kind of computations above. For the second assertion, notice that $\widetilde{A}_{0}(0)=\overline{A}_{0}$.\\
(2) If $w_{0}=\cdots =w_{n}=1$, $e_{0}$ induces a cyclic vector of $\overline{A}_{0}$. Hence, we can use \cite[p. 123]{HeMa}: the universality then follows from the fact that  
$(C_{i})_{i+1,1}=-1$ for all $i=0,\cdots ,\mu -1$.
\end{proof}

The quantum differential system $\widetilde{\mathcal{S}}_{w}$, with the help of proposition \ref{MStoSTF}, gives a Frobenius type structure on $M$,
$$\widetilde{\fit}_{w}=(M,\widetilde{E}, \widetilde{\bigtriangledown} , \widetilde{R}_{0}, \widetilde{R}_{\infty}, \widetilde{\Phi} , \widetilde{g})$$
the matrices of $\widetilde{R}_{0}$ and $\widetilde{R}_{\infty}$ being, in the obvious bases,
$\widetilde{A}_{0}$ and
$-A_{\infty}$.
  By definition, it is an unfolding of
$\overline{\fit}_{w}$.

\subsubsection{Construction of limit Frobenius manifolds}

In order to get a Frobenius manifold from the Frobenius type structure $\widetilde{\fit}_{w}$, we still need an invertible period map: its existence follows from the choice of the first columns of the matrices $C_{i}$.

\begin{corollary} \label{existFrobcan}  
(1) The period map 
$$\varphi_{\widetilde{e}_{0}} : TM\rightarrow \widetilde{E},$$
defined by $\varphi_{\widetilde{e}_{0}}(\xi )=-\widetilde{\Phi}_{\xi}(\widetilde{e}_{0})$, is an isomorphism and $\widetilde{e}_{0}$ is an eigenvector of $\widetilde{R}_{\infty}$.\\
(2) The section $\widetilde{e}_{0}$ defines, through the period map $\varphi_{\widetilde{e}_{0}}$ a Frobenius structure on $M$ which makes $M$ the  limit Frobenius manifold for which:\\
(a) the coordinates $(x_{0},\cdots ,x_{\mu -1})$ are $\bigtriangledown$-flat: one has $\bigtriangledown \partial_{x_{i}}=0$ for all  $i=0,\cdots ,\mu -1$,\\
(b) the product is constant in flat coordinates,\\
(c) the potential $\Psi$ is a polynomial of degree less or equal to $3$,\\
(d) the Euler vector field is $E=-(\alpha_{0}-1)x_{0}\partial_{x_{0}}+\mu\partial_{x_{1}}-(\alpha_{2}-1)x_{2}\partial_{x_{2}}-\cdots -(\alpha_{\mu -1}-1)x_{\mu -1}\partial_{x_{\mu -1}}$.
\end{corollary}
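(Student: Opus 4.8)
The plan is to read the statement as a direct application of the ``primitive section yields a Frobenius manifold'' machinery of \cite{Do1} and \cite{HeMa} to the unfolded Frobenius type structure $\widetilde{\fit}_w$, with candidate primitive section $\widetilde{e}_0$. Everything then reduces to explicit computations in the basis $\widetilde{e}=(\widetilde{e}_0,\dots,\widetilde{e}_{\mu-1})$, all of which rest on the single identity $C_i(\widetilde{e}_0)=-\widetilde{e}_i$ established in the proof of the preceding proposition.

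First I would prove (1). By the formulas of Proposition \ref{MStoSTF}, $\widetilde{\Phi}_{\partial_{x_i}}[\widetilde{e}_0]=[\theta\widetilde{\nabla}_{\partial_{x_i}}\widetilde{e}_0]$; since the $dx_i$-part of the matrix of $\widetilde{\nabla}$ is $\theta^{-1}C_i$, this equals $[C_i(\widetilde{e}_0)]=-[\widetilde{e}_i]$. Hence $\varphi_{\widetilde{e}_0}(\partial_{x_i})=-\widetilde{\Phi}_{\partial_{x_i}}(\widetilde{e}_0)=[\widetilde{e}_i]$, so $\varphi_{\widetilde{e}_0}$ carries the coordinate frame $(\partial_{x_i})$ onto the basis $([\widetilde{e}_i])$ of $\widetilde{E}$ and is therefore an isomorphism. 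That $\widetilde{e}_0$ is an eigenvector of $\widetilde{R}_\infty$ is immediate, since $\widetilde{R}_\infty$ has diagonal matrix $-A_\infty$ (the eigenvalue being $-\alpha_0=0$ by Corollary \ref{lesalphak}). Together with the $\widetilde{\bigtriangledown}$-flatness of $\widetilde{e}_0$ (the $dx_i$-part of $\widetilde{\nabla}$ vanishes at $\tau=\theta^{-1}=0$), this shows $\widetilde{e}_0$ is a homogeneous primitive section, so the cited construction endows $M$ with a Frobenius structure through $\varphi_{\widetilde{e}_0}$, giving the first assertion of (2).

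Then I would verify (a)--(d) one at a time. For (a), the vanishing of the $dx_i$-part at $\theta=\infty$ gives $\widetilde{\bigtriangledown}[\widetilde{e}_i]=0$ for all $i$; transporting through $\varphi_{\widetilde{e}_0}$, which intertwines $\widetilde{\bigtriangledown}$ with the flat connection $\bigtriangledown$ of the Frobenius manifold, yields $\bigtriangledown\partial_{x_i}=0$, so $(x_0,\dots,x_{\mu-1})$ are flat coordinates. For (b), the product satisfies $\varphi_{\widetilde{e}_0}(\partial_{x_i}\circ\partial_{x_j})=-\widetilde{\Phi}_{\partial_{x_i}}([\widetilde{e}_j])=-[C_i(\widetilde{e}_j)]$; since each $C_i(\widetilde{e}_j)$ is, by its very definition, a constant multiple of a single basis vector, the structure constants are constant, i.e. the product is constant in the flat coordinates (it is precisely the product $\cup$ of section \ref{orbifoldcohomology}). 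For (c), the metric $g(\partial_{x_i},\partial_{x_j})=\widetilde{g}([\widetilde{e}_i],[\widetilde{e}_j])$ is the coefficient of $\theta^n$ in $\overline{S}(e_i,e_j)$, hence constant by Corollary \ref{orbipairing}; therefore $\partial_i\partial_j\partial_k\Psi=g(\partial_{x_i}\circ\partial_{x_j},\partial_{x_k})$ is constant, so $\Psi$ is a polynomial of degree at most $3$. For (d), the Euler field is recovered from $\widetilde{R}_0$ by $E=\varphi_{\widetilde{e}_0}^{-1}(\widetilde{R}_0(\widetilde{e}_0))$; computing $\widetilde{R}_0(\widetilde{e}_0)=\widetilde{A}_0(x)(\widetilde{e}_0)$ with $C_i(\widetilde{e}_0)=-\widetilde{e}_i$ gives $-(\alpha_0-1)x_0\widetilde{e}_0+\mu\widetilde{e}_1-\sum_{i\geq2}(\alpha_i-1)x_i\widetilde{e}_i$, and applying $\varphi_{\widetilde{e}_0}^{-1}$ produces exactly the stated $E$.

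The linear algebra above is harmless; the one point I would emphasize is that the construction applies at all, i.e. that $\varphi_{\widetilde{e}_0}$ is an isomorphism even when the unfolding is not universal (the orbifold case $\mu\geq n+2$, for which no universality is available). This is guaranteed solely by the prescribed first columns of the $C_i$, namely $C_i(\widetilde{e}_0)=-\widetilde{e}_i$, which force $\varphi_{\widetilde{e}_0}$ to send the coordinate frame onto a basis. Thus the delicate part is conceptual---invoking the primitive-section construction without the crutch of universality---rather than computational.
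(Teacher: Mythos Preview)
Your proof is correct and follows essentially the same approach as the paper's own proof: both reduce everything to the identity $C_i(\widetilde{e}_0)=-\widetilde{e}_i$, compute the period map explicitly on the coordinate frame, and read off (a)--(d) from the constancy of the $C_i$ and the shape of $\widetilde{A}_0(x)$. Your treatment is in fact a bit more explicit than the paper's in two places: for (a) you argue directly from the vanishing of the $dx_i$-part of $\widetilde{\nabla}$ at $\tau=0$ (the paper says only ``the first column of the matrices $C_i$ are constant''), and for (d) you spell out the identity $E=\varphi_{\widetilde{e}_0}^{-1}(\widetilde{R}_0(\widetilde{e}_0))$ and the resulting computation, where the paper just points to the definition of $\widetilde{A}_0(x)$. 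One minor quibble: in (c) the relevant reference for the constancy of the metric is the definition of $\overline{S}$ (and hence $\widetilde{S}$) in section~\ref{limitSTF}, not Corollary~\ref{orbipairing}, which concerns $S^B$ in the basis $\omega^{\orb}$; but this does not affect the argument.
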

\begin{proof}
(1) Indeed, the period map $\varphi_{\widetilde{e}_{0}}$ is defined by  $\varphi_{\widetilde{e}_{0}}(\partial_{x_{i}})=-C_{i}(\widetilde{e}_{0})=\widetilde{e}_{i-1}$. Last, $\widetilde{e}_{0}$ is an eigenvector of $\widetilde{R}_{\infty}$ because $e_{0}$ is an eigenvector of $R_{\infty}$. Let us show (2): the isomorphism $\varphi_{\widetilde{e}_{0}}$ brings on $TM$ the structures on $\widetilde{E}$: (a) follows from the fact that the first column of the  matrices $C_{i}$ are constant and (b) from the fact that the matrices $C_{i}$ are constant because, by the definition of the product, $\varphi_{\widetilde{e}_{0}}(\partial_{x_{i}}*\partial_{x_{j}})=C_{i}(C_{j}(\widetilde{e}_{0}))$; (c) follows from (b) because, in flat coordinates,
$$g(\partial_{x_{i}}*\partial_{x_{j}},\partial_{x_{k}})=\frac{\partial^{3}\Psi}{\partial x_{i}\partial x_{j}\partial x_{k}}$$
where $g$ is the metric on $TM$ induced by $\widetilde{g}$.
Last, (d) follows from the definition of  $\widetilde{A}_{0}(x)$.
\end{proof}
 
\begin{remark} 
 If $w_{1}=\cdots =w_{n}=1$, the product is given by $\partial_{x_{i}}*\partial_{x_{j}}=\partial_{x_{i+j}}$ if $i+j\leq \mu -1$, $0$ otherwise, and we have
$$\Psi =\sum_{i,j, \; i+j\leq \mu -1}\frac{1}{6}x_{i}x_{j}x_{\mu -1-i-j}$$
up to a polynomial of degree less or equal to $2$. $\blacklozenge$
\end{remark}

\begin{remark}
 Of course, the period map can be an isomorphism for other
choices of the first columns of the matrices $C_{i}$: 
\begin{itemize}
\item the resulting Frobenius manifolds will be isomorphic to the
one given by corollary \ref{existFrobcan} if $w_{1}=\cdots =w_{n}=1$ (manifold case) because the Frobenius type structure $\widetilde{\fit}_{w}$ is a {\em universal} deformation of our limit Frobenius type structure $\overline{\fit}_{w}$ (see \cite{HeMa} and \cite[Theorem 3.2.1]{Do1}). 
This Frobenius structure is the one on
  $M:=H^{*}(\ppit^{n},\cit)$ given by the cup product and the Poincar\'e
  duality on each tangent spaces.
\item If there exists an $w_{i}$ such that $w_{i}\geq 2$ (orbifold case), 
one theoretically could get, starting from $\overline{\fit}_{w}$, several Frobenius manifolds (we have shown that there exists at least one), which can be difficult to compare because we loose the universality property here. However, the Frobenius manifold obtained in the previous corollary is the one on $M:=H^{*}_{\orb}(\ppit(w),\cit)$ given by the orbifold cup product and the Poincar\'e duality on each tangent spaces.
\end{itemize}$\blacklozenge$
\end{remark}

\subsection{Logarithmic Frobenius manifolds}\label{subsec:log}

A manifold $M$ is a {\em Frobenius manifold with logarithmic poles along the divisor} $D=\{x=0\}$ (for short a {\em logarithmic Frobenius manifold}) if $Der_{M}(\log D)$ is equipped with a metric, a multiplication and two global
logarithmic vector fields (the unit $e$ for the multiplication and the Euler vector field $E$), all these objects satisfying the usual compatibility relations, see \cite[Definition 1.4]{R}. We can also define a Frobenius manifold with logarithmic poles {\em without metric}: in this case, we still need a flat, torsionless connection, a symmetric Higgs field (that is a product) and two global logarithmic vector fields as before.  

There are two ways to construct such manifolds: the first one is to start from initial data, namely a logarithmic Frobenius type structure in the sense of \cite[Definition 1.6]{R}, and to unfold it, just as in section \ref{constructionfrobman}. 
This logarithmic Frobenius type structure will be obtained from a logarithmic quantum differential system, as in proposition \ref{MStoSTF}.
The second is to work directly with the big Gromov-Witten potential, as it is done in {\em loc. cit.} in the case of $\ppit^{n}$. We explore these two ways.

\subsubsection{Construction via unfoldings}\label{LogFrTySt}

Let $N=\cit$. We will denote the coordinate on $N$ by $x$ and we will put $D:=\{x=0\}$. The following definitions are borrowed from \cite{R}.

\begin{definition}\label{logSaitoStructure}
 A quantum differential system of weight $n$ on $\ppit^{1}\times N$ with {\em logarithmic poles along $D$} (for short a 
{\em logarithmic quantum differential system}) is a tuple $$(N, D, H^{log}, \nabla^{log} ,S^{log}, n)$$
where $H^{log}$ is a trivial bundle on $\ppit^{1}\times N$, $\nabla^{log}$ is a flat meromorphic connection on $H^{log}$ such that
$$\nabla^{log} (\Gamma (\ppit^{1}\times N, H^{log}))\subset \theta^{-1}\Omega_{\cit\times N}^{1}(\log((\{0\}\times \cit)\cup (\cit\times \{0\})))\otimes 
\Gamma (\ppit^{1}\times N, H^{log})$$
and $S^{log}$ is a $\nabla^{log}$-flat bilinear form as in definition \ref{defi:Saito,structure}.  
\end{definition}

In order to construct logarithmic Frobenius manifolds, we will need the following

\begin{definition}\label{logFTS}
A Frobenius type structure with {\em logarithmic pole along} $D$ (for short, a {\em logarithmic Frobenius type structure}) is a tuple
$$(N, D, E^{log}, \bigtriangledown^{log} , R_{0}^{log}, R_{\infty}^{log},\Phi^{log} ,g^{log})$$
\noindent where $E^{log}$ is a bundle on $N$, $R_{0}^{log}$ and $R_{\infty}^{log}$ are $\mathcal{O}_{N}$-linear endomorphisms of $E^{log}$,
 $$\Phi^{log} :E^{log}\rightarrow \Omega^{1}(\log (D))\otimes E^{log}$$ 
is a $\mathcal{O}_{N}$-linear map, $g^{log}$ is a metric on $E^{log}$, {\em i.e} a $\mathcal{O}_{N}$-bilinear form, symmetric and nondegenerate,
and $\bigtriangledown^{log}$ is a connection on $E^{log}$ with logarithmic pole along $D$, these object satisfying the compatibility relations of section \ref{FrobeniusSaito}. 
\end{definition}

\begin{remark} (1) One can also define in an obvious way a logarithmic quantum differential system and logarithmic Frobenius type structure {\em without metric}.\\
(2) As in section \ref{FrobeniusSaito}, a logarithmic quantum differential system determines a logarithmic Frobenius type structure (see \cite[proposition 1.10]{R})\\
(3) As before, we will work preferably in the algebraic category: $E^{log}$ will be a free $\cit [x]$-module {\em etc...}$\blacklozenge$
\end{remark}

 Proposition \ref{prop:nabla,basis,nonflat} and theorem \ref{basevarphi} suggests that we are not so far from a logarithmic quantum differential system. Indeed, with the notations of  section \ref{Bmodel} and forgetting the index $B$, $H^{log}$ will be obtained from an extension of $G_{0}$ as a free $\cit [x,\theta ]$-module (recall that $G_{0}$ is only a $\cit [x,x^{-1},\theta ]$-module). We can use for instance the $\cit [x,\theta ]$-submodule of $G_{0}$ generated by $\omega_{0},\cdots ,\omega_{\mu -1}$, and we thank C. Sevenheck for this suggestion: we will denote it by ${\cal L}_{0}$. Let ${\cal L}_{\infty}$ be the $\cit [x,\tau ]$-module
generated by $\omega_{0},\cdots ,\omega_{\mu -1}$ where, as usual, $\tau :=\theta^{-1}$. 
These two free modules give a trivial bundle $H^{log}$ equipped with a connection with the desired poles, thanks to  
theorem \ref{basevarphi}.
In order to define the metric $S^{log}$, extend the bilinear form $S$ defined in section \ref{dualite} to ${\cal L}_{0}$.
We will denote the resulting tuple by $\mathcal{S}^{log}_{w}$.

The logarithmic Frobenius type structure is then obtained as follows: put $E^{log}={\cal L}_{0}/\theta {\cal L}_{0}$. Define, 
as in section \ref{FrobeniusSaito}, the endomorphisms $R_{0}^{log}$ and $\Phi_{\xi}^{log}$ for any logarithmic vector field 
$\xi\in Der_{\cit}(\log D)$ and, using now the restriction of ${\cal L}_{\infty}$ at $\tau =0$, the endomorphisms $R_{\infty}^{log}$ and $\bigtriangledown_{\xi}^{log}$.
We get the flat bilinear symmetric form $g^{log}$ on $E^{log}$ putting
$$g^{log}([\omega_{i}], [\omega_{j}]):=\theta^{-n}S^{log}(\omega_{i}, \omega_{j})$$
where $[\, ]$ denotes the class in $E^{log}$. We will denote the resulting tuple by $\fit^{log}_{w}$.

\begin{proposition}\label{LogSTF} (1) The tuple $\mathcal{S}^{log}_{w}$ is a logarithmic quantum differential system if $w_{0}=\cdots =w_{n}=1$ and a 
logarithmic quantum differential system {\em without metric} otherwise.\\
(2) The tuple $\fit^{log}_{w}$ 
is a logarithmic Frobenius type structure if $w_{0}=\cdots =w_{n}=1$ and a logarithmic Frobenius type structure {\em without metric} otherwise.
\end{proposition}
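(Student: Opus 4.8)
The plan is to verify, directly in the canonical basis $\omega^{\varphi}=(\omega_{0}^{\varphi},\cdots,\omega_{\mu-1}^{\varphi})$, that the tuple $\mathcal{S}^{log}_{w}$ satisfies all the conditions of Definition \ref{logSaitoStructure}, and then to deduce the statement for $\fit^{log}_{w}$ via the mechanism of Proposition \ref{MStoSTF} (adapted to the logarithmic setting as in \cite[proposition 1.10]{R}). The key input is Theorem \ref{basevarphi}, which already gives us the matrix of $\nabla^{B}$ in the basis $\omega^{\varphi}$, namely
$$\left(\frac{A_{0}^{\varphi}(x)}{\theta}+A_{\infty}\right)\frac{d\theta}{\theta}+\left(-\frac{A_{0}^{\varphi}(x)}{\theta}-A_{\infty}+H\right)\frac{dx}{\mu x}.$$
First I would check that the bundle $H^{log}$, built from the free $\cit[x,\theta]$-module $\mathcal{L}^{\varphi}_{0}$ and the free $\cit[x,\tau]$-module $\mathcal{L}^{\varphi}_{\infty}$ glued along the common basis, is a trivial bundle on $\ppit^{1}\times N$; this is immediate since both charts are generated freely by the same $\mu$ sections.

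The heart of the verification is the pole order of the connection. The $d\theta/\theta$ part is manifestly logarithmic along $\{0\}\times\cit$ and of the required form $\theta^{-1}\Omega^{1}(\log(\cdots))$ because $A_{0}^{\varphi}(x)/\theta$ contributes $\theta^{-1}$ and $A_{\infty}$ contributes a genuine logarithmic term. The subtle point is the $dx$ part: I must confirm that $\bigl(-A_{0}^{\varphi}(x)/\theta-A_{\infty}+H\bigr)\frac{dx}{\mu x}$ lands in $\theta^{-1}\Omega^{1}(\log D)\otimes\Gamma(H^{log})$, i.e.\ that after extracting the factor $dx/x$ (which is logarithmic along $D=\{x=0\}$) the matrix entries are holomorphic in $x$ and have pole order at most one in $\theta$. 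Here the essential observation is that $A_{0}^{\varphi}(x)$ depends \emph{polynomially} on $x$ (indeed only through the single entry $\mu x/w^{w}$), so $-A_{0}^{\varphi}(x)/\theta$ is $\theta^{-1}$ times a matrix holomorphic at $x=0$; and $(-A_{\infty}+H)$ is the constant diagonal matrix $\mathrm{Diag}(-\alpha_{0}+0,\cdots,-\alpha_{\mu-1}+(\mu-1))=\mu R^{\varphi}=\mathrm{Diag}(\mu c_{0},\cdots,\mu c_{\mu-1})$, so $(-A_{\infty}+H)\frac{dx}{\mu x}=R^{\varphi}\frac{dx}{x}$ is logarithmic along $D$ with holomorphic residue. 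Thus $\mathcal{S}^{log}_{w}$ has the correct logarithmic pole structure independently of the weights; flatness of $\nabla^{log}$ is inherited from the flatness of $\nabla^{B}$ established in Theorem \ref{basevarphi}.

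The point where the manifold and orbifold cases diverge, and which I expect to be the main obstacle, is the \textbf{metric}. I must decide whether $S^{B}$, extended from $G_{0}$ to $\mathcal{L}_{0}^{\varphi}$, takes values in $\theta^{n}\cit[x,\theta]$ (as required by Definition \ref{defi:Saito,structure} applied over the logarithmic lattice) rather than merely in $\theta^{n}\cit[x,x^{-1},\theta]$. By formula \eqref{eq:paring,omega,phir}, the nonzero pairings are $S^{B}(\omega_{k}^{\varphi},\omega_{n-k}^{\varphi})=m_{1}^{-1}\theta^{n}$ and $S^{B}(\omega_{k}^{\varphi},\omega_{\mu+n-k}^{\varphi})=w^{-w}\frac{x}{m_{1}}\theta^{n}$; since these are already polynomial in $x$ (no negative powers of $x$ appear), one checks that $S^{B}$ restricts to a well-defined pairing $\mathcal{L}_{0}^{\varphi}\times\mathcal{L}_{0}^{\varphi}\to\theta^{n}\cit[x,\theta]$. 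The delicate issue is \emph{nondegeneracy at $x=0$}: setting $x=0$ kills the second family of pairings entirely, so the Gram matrix of $S^{B}$ modulo $x$ becomes degenerate exactly on the twisted sectors (the indices $k\ge n+1$, corresponding to $c_{k}\ne 0$). Hence when $w_{0}=\cdots=w_{n}=1$ we have $\mu=n+1$, there are no such indices, the pairing stays nondegenerate along $D$, and $\mathcal{S}^{log}_{w}$ is a genuine logarithmic Saito structure with metric; whereas if some $w_{i}\ge 2$ the metric degenerates at the origin and we retain only a logarithmic Saito structure \emph{without} metric. Having settled part (1), part (2) follows formally: the construction of Proposition \ref{MStoSTF} transports $(H^{log},\nabla^{log},S^{log})$ to $(E^{log},\bigtriangledown^{log},R_{0}^{log},R_{\infty}^{log},\Phi^{log},g^{log})$, the compatibility relations being the logarithmic counterpart of the integrability already checked, and $g^{log}$ is nondegenerate precisely when $S^{log}$ is, i.e.\ exactly in the manifold case.
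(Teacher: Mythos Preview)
Your proposal is correct and follows essentially the same approach as the paper: the crucial observation is that the pairing $S^{B}$ in formula \eqref{eq:paring,omega,phir} degenerates at $x=0$ precisely on the twisted indices $k\geq n+1$, so it remains nondegenerate along $D$ if and only if $\mu=n+1$. The paper's own proof is a single sentence (``By section \ref{dualite}, $S^{log}$ is not nondegenerate, unless $w_{0}=\cdots =w_{n}=1$''), treating the logarithmic pole structure and flatness as already implicit in Theorem \ref{basevarphi}; your version simply makes those verifications explicit, which is harmless and arguably clearer.
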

\begin{proof}
By section \ref{dualite}, $S^{log}$ is not nondegenerate, unless $w_{0}=\cdots =w_{n}=1$. This gives (1) and (2) follows.
\end{proof}

\begin{corollary}\label{coro:logFrob,without,metric}
The section $\omega_{0}$ together with the tuple $\fit^{log}_{w}$ define a logarithmic Frobenius manifold if $w_{0}=\cdots =w_{n}=1$ and a logarithmic Frobenius manifold without metric otherwise. 
\end{corollary}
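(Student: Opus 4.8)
The plan is to reproduce, in the logarithmic framework of \cite{R}, the reconstruction procedure of sections \ref{constructionfrobman} and \ref{varfrob}: I would produce a logarithmic unfolding of the logarithmic Saito structure $\mathcal{S}^{log}_{w}$ (and then, \emph{via} the logarithmic analogue of proposition \ref{MStoSTF} recalled in \cite[proposition 1.10]{R}, of the logarithmic Frobenius type structure $\fit^{log}_{w}$) over a base extending $N$ and its divisor $D=\{x=0\}$, and then use $\omega_{0}^{\varphi}$ as a primitive, homogeneous section. The point is that the period map attached to $\omega_{0}^{\varphi}$ becomes an isomorphism on this unfolding, and this is precisely what equips the base with the structure of a logarithmic Frobenius manifold. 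As in section \ref{varfrob}, I would unfold first at the level of the Saito structure and only afterwards pass to the Frobenius type structure.

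First I would check that $\omega_{0}^{\varphi}$ is a legitimate initial datum. By theorem \ref{basevarphi}, the residue endomorphism $R_{0}^{log}=\theta^{2}\nabla_{\partial_{\theta}}$ acts on $E^{log}$ by the matrix $A_{0}^{\varphi}(x)$ in the basis $[\omega^{\varphi}]$; since this matrix is of companion type, sending $[\omega_{k}^{\varphi}]$ to $\mu[\omega_{k+1}^{\varphi}]$ for $k\leq \mu-2$ and $[\omega_{\mu-1}^{\varphi}]$ to $(\mu x/w^{w})[\omega_{0}^{\varphi}]$, the classes $[\omega_{0}^{\varphi}], R_{0}^{log}[\omega_{0}^{\varphi}],\cdots ,(R_{0}^{log})^{\mu-1}[\omega_{0}^{\varphi}]$ form a $\cit[x]$-basis of $E^{log}$. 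Hence $\omega_{0}^{\varphi}$ is cyclic for $R_{0}^{log}$, which is the generation condition of \cite{R}; and since $R_{\infty}^{log}$ has the diagonal matrix $-A_{\infty}=-\Diag(\alpha_{0},\cdots ,\alpha_{\mu-1})$, the section $\omega_{0}^{\varphi}$ is an eigenvector (eigenvalue $0$), hence homogeneous. These are exactly the hypotheses required to run the logarithmic reconstruction of \cite{R}.

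It then remains to invoke that construction and to settle the metric dichotomy. In the manifold case $w_{0}=\cdots =w_{n}=1$, proposition \ref{LogSTF}(1) guarantees that $S^{log}$ is nondegenerate, so the construction yields a genuine logarithmic Frobenius manifold, which I would identify with the one attached to $\ppit^{n}$ in \emph{loc. cit.}; in the orbifold case $S^{log}$ degenerates, so I would run the same argument in the ``without metric'' versions of definitions \ref{logSaitoStructure} and \ref{logFTS}, obtaining a logarithmic Frobenius manifold without metric. The hard part will be the invertibility of the period map $\xi\mapsto -\Phi^{log}_{\xi}(\omega_{0}^{\varphi})$ in the logarithmic category, that is, building a suitable unfolding and checking that the injectivity and generation conditions of \cite{R} persist along $D$. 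Thanks to the companion form of $A_{0}^{\varphi}(x)$ established above, this should reduce to the same kind of linear-algebra computation as in section \ref{varfrob}, the only new ingredient being to keep track of the logarithmic poles; the homogeneity of $\omega_{0}^{\varphi}$ then ensures compatibility with the logarithmic Euler field and the grading.
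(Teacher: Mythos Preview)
Your proposal is correct and follows essentially the same strategy as the paper: verify that $\omega_{0}^{\varphi}$ satisfies the hypotheses of Reichelt's reconstruction theorem and then invoke it, using proposition \ref{LogSTF} to decide whether or not one gets a metric. Two remarks are worth making.

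First, the paper is more economical than your plan. You propose to build the logarithmic unfolding of $\mathcal{S}^{log}_{w}$ by hand, in the style of section \ref{varfrob}, before checking the period map. The paper skips this entirely and appeals directly to \cite[theorem 1.12]{R}: that theorem already packages the existence of the unfolding once the conditions (IC), (EC), (GC) are verified for $\omega_{0}^{\varphi}$ on $\fit^{log}_{w}$ itself. So what you call ``the hard part'' is in fact absorbed by the cited result, and the only actual work is the verification of those three conditions at $x=0$. Your cyclicity check via $R_{0}^{log}$ is equivalent to the paper's check via $\Phi_{x\partial_{x}}^{log}$, since by theorem \ref{basevarphi} the matrix of $\Phi_{x\partial_{x}}^{log}$ is $-\mu^{-1}A_{0}^{\varphi}(x)$, a scalar multiple of the matrix $A_{0}^{\varphi}(x)$ of $R_{0}^{log}$; so (GC) and the injectivity (IC) of the period map at $0$ follow from the same companion-matrix observation you make.

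Second, there is one condition you do not mention explicitly: the $\bigtriangledown^{log}$-flatness of $\omega_{0}^{\varphi}$ on $N\setminus D$. The paper records this (it follows because the first diagonal entry of $R^{\varphi}=\mu^{-1}(H-A_{\infty})$ is $c_{0}=0$), and it is needed in Reichelt's framework alongside (IC), (EC), (GC). Your homogeneity check (eigenvector of $R_{\infty}^{log}$) covers (EC) but not this flatness, so you should add it.
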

\begin{proof}
Define 
$$\varphi_{\omega_{0}}:Der_{\cit}(\log D)\rightarrow E^{log},$$
by $\varphi_{\omega_{0}}(\xi):=-\Phi_{\xi}^{log}(\omega_{0})$.
By theorem \ref{basevarphi}, the matrix of $\Phi_{x\partial_{x}}^{log}$ is $-{A_{0}(x)}{\mu^{-1}}$. Hence
 $\varphi_{\omega_{0}}|_{0}$ is injective and $\omega_{0} |_{0}$ and its images under iteration of the maps $\Phi_{x\partial_{x}}^{log}|_{0}$ generate $E^{log}|_{0}$. The result now follows from \cite[theorem 1.12]{R} because the section $\omega_{0}$ satisfies conditions (IC), (EC) and (GC) of {\em loc. cit.} and its restriction to $N-D$ is $\bigtriangledown^{log}$-flat (because $R_{\infty}^{log}(\omega_{0})=0$).
\end{proof}

\noindent If $w_{0}=\cdots =w_{n}=1$, we thus get a counterpart of the results obtained for $\ppit^{n}$, by a different method (see section below) in \cite[section 2]{R}. If there exists a weight $w_{i}$ such that $w_{i}\geq 2$, the construction of a logarithmic Frobenius manifold with metric using this method is still an open problem.

\begin{remark} One could of course consider different extensions of $G_{0}$ as a free $\cit [x,\theta ]$-module and start with a different 
logarithmic quantum differential system: for instance, it is possible to work with the lattice ${\cal L}_{0}^{\psi}$ such that the eigenvalues of the residue matrix of $\nabla_{\partial_{x}}$ at $x=0$ are contained in $]-1,0]$. It is easily checked that (with obvious notations) the section $\omega_{0}^{\psi}$ in ${\cal L}_{0}^{\psi}$ is flat but does not satisfy (GC) if $\mu\geq n+2$. The only section which satisfies (IC), (EC) and (GC) is $\omega_{n+1}^{\psi}$ but this one is not flat.  $\blacklozenge$
\end{remark}

\subsubsection{Construction via the Gromov-Witten potential}

In \cite{R}, Reichelt associates a logarithmic Frobenius manifold to a smooth projective variety, using
the Gromov-Witten potential. In this
section, we explain why his construction does not apply in the
orbifold case.

In order to simplify the notations, we focuse on
weighted projective spaces.  Put $M_{A}:=H^{*}_{\orb}(\ppit(w),\cit)$ and
let $(M_{A},H^{A},\nabla^{A},S^{A},n)$ be its big $A$-model quantum differential system 
(see Definition \ref{defi:big,A,D,module}).  We define the action
of $\Pic({\ppit(w)})$ on the trivial bundle $H^{A}\to \ppit^{1}\times
M_{A}$ as follows:
\begin{enumerate}
\item on the fibers of $H^{A}$ we define, for any $f\in F$ and $\alpha_{f}\in
  H^{*}(\ppit(w)_{S_{f}},\cit)$,
  \begin{align*}
\mathcal{O}(d)\cdot \alpha_{f}:=e^{2\pi\sqrt{-1} d f}\alpha_{f}
  \end{align*}
\item on $M_{A}= H^{*}_{\orb}(\ppit(w),\cit)$ we define 
  \begin{displaymath}
    \mathcal{O}(d)\cdot \left(\alpha\oplus\bigoplus_{f\in
        F/ \{0\}} \alpha_{f}\right):=(\alpha-2\pi\sqrt{-1}
    d.c_{1}(\mathcal{O}(1)))\oplus\bigoplus_{f\in F/ \{0\}}e^{2\pi\sqrt{-1} d.f}\alpha_{f}
  \end{displaymath}
\end{enumerate}
 
As in proposition \ref{prop:action,classes}, the quantum differential system is
equivariant with respect to this action so that we have a quotient
quantum differential system 
$(\mathcal{M}_{A},\widetilde{H}^{A},\widetilde{\nabla}^{A},\widetilde{S}^{A},n)$
where $\mathcal{M}_{A}:=M_{A}/\Pic(\ppit(w)).$ As the basis
$(\mathbf{1}_{f}P^{k})$ is not invariant for $f\neq 0$ with respect to
this action on $M_{A}$ (see Proposition \ref{prop:action,bases,P,1f}),
the associated coordinates $(t_{0},q=e^{t_{1}},t_{2}, \ldots
,t_{\mu-1})$ on $M_{A}$ are not coordinates on the quotient
$\mathcal{M}_{A}$.  Nevertheless, we can complete $(t_{0},q=e^{t_{1}},t_{2}, \ldots
,t_{n})$ in order to get a system of coordinates, denoted by
$\underline{\tau}=(t_{0},q=e^{t_{1}},t_{2}, \ldots ,t_{n},\tau_{n+1},
\ldots ,\tau_{\mu-1})$, on $\mathcal{M}_{A}$.

Put $\widetilde{E}^{A}:=\widetilde{H}^{A}\mid_{\{0\}\times
  \mathcal{M}_{A}}$.  If we want to repeat the argument given by
Reichelt in \S 2.1.1 \cite{R}, we should define the
metric using a ``infinitesimal period map" $T\mathcal{M}_{A}\to
\widetilde{E}^{A}$ which sends the vector field $\partial_{\tau_{i}}$
to $\mathbf{1}_{c_{i}}P^{r(i)}$ (cf \eqref{eq:17} for the notation).
This is not allowed in the orbifold case because for $c_{i}\neq 0$ the
cohomology class $\mathbf{1}_{c_{i}}P^{r(i)}$ does not define a global
section of the quotient bundle $\widetilde{H}^{A}\to \ppit^{1}\times
\mathcal{M}_{A}$.  

Natural global sections of $\widetilde{E}^{A}$ are
$({P^{\bullet_{\underline{\tau}}}}^{i})_{i\in\{0, \ldots ,\mu-1\}}$. But
proposition \ref{prop:pairingglobal} implies that the metric
degenerates at $q=0$. Hence as in corollary \ref{coro:logFrob,without,metric}, using
these global sections, we get a logarithmic Frobenius manifold without
metric on $\mathcal{M}_{A}$ in the orbifold case.

\

\noindent {\bf Acknowledgements.}  
We thank C. Sabbah for numerous discussions and for his support during
all these years. We would like also to thank H. Iritani, C.
Sevenheck, T.  Reichelt and I. de Gregorio for fruitful discussions
about this subject.  The second author is grateful to T. Coates for his explanations about 
Givental's framework in Luminy. He is also grateful to the scientific
board of the university Montpellier 2 and to the university of
Saint-Etienne for its hospitality.

\end{document}